\renewcommand{\textbf}[1]{\begingroup\bfseries\mathversion{bold}#1\endgroup}
\newtheorem{thma}{Theorem}
\newtheorem{thm}{Theorem}[section]
\newtheorem{defi}{Definition}[section]
\newtheorem{corollary}[thm]{Corollary}
\newtheorem{prop}[thm]{Proposition}
\newtheorem{lemma}[thm]{Lemma}
\newtheorem{claim}[thm]{Claim}
\theoremstyle{definition}
\newtheorem{remark}[thm]{Remark}
\newcommand{\R}{\mathbb R}
\newcommand{\Z}{\mathbb Z}
\newcommand{\N}{\mathbb N}
\newcommand{\C}{\mathbb C}
\numberwithin{equation}{section}
\def\XXint#1#2#3{{\setbox0=\hbox{$#1{#2#3}{\int}$}
    \vcenter{\hbox{$#2#3$}}\kern-.5\wd0}}
\def\blfootnote{\xdef\@thefnmark{}\@footnotetext}
\date{date}
\begin{document}

\title{Effective recovery of Fourier spectra and spectral approximation by finite groups}

\author{Mircea Petrache\footnote{PUC, \texttt{mpetrache@mat.uc.cl}. ORCID id: 0000-0003-2181-169X} and 
Rodolfo Viera\footnote{PUC, \texttt{rodolfo.viera@mat.uc.cl}. ORCID id: 0000-0002-0810-6374}}\blfootnote{Pontificia Universidad Católica de Chile, Facultad de Matematicas, Avda. Vicuña Mackenna 4860, Santiago, 6904441, Chile}

\date{\today}
\maketitle
\begin{abstract}
    We prove a result on approximate recovery, with high probability, of subgroups of a finite nonabelian group $\Gamma$ from their random perturbations. We use this for ad-hoc sequences of $\Gamma_n$ while passing to the continuum limit, in order to obtain asymptotic almost sure recovery for rational lcsc nilpotent Lie groups. By comparison to limit theorems for groups of polynomial growth, it turns out that this setting is the natural general setting for recovery results, under polynomial growth assumptions on the $\Gamma_n$. This approach makes effective the convergence rate in previous Fourier recovery theorems in Euclidean space, and extends them to the nonabelian setting. A series of interesting further directions are highlighted by this approach.
\end{abstract}

\section{Introduction}\label{sec:intro}
Motivated by problems in signal denoising \cite{signoise}, hyperuniform point configurations \cite{kkt} and diffraction theory \cite{Hof, BG1}, to mention a few, in the last years there has been a lot of activity in random perturbations on discrete structures. Roughly speaking, the general framework is that, starting with a discrete set, one is concerned with how a structure of interest defined on it is affected by random perturbations of the set; for instance, in \cite{kkt} the authors relate by a very precise formula the structure factors of a (randomized) lattice and a perturbed version of it. Very recently, \cite{Yakir, pv} addressed the problem of studying the effect of random perturbations on the Fourier transform of a discrete set $X$ in the Euclidean space, proving robustness-type results for the Fourier transform in lattices \cite{Yakir} and more general quasi-periodic subsets of Euclidean spaces, such as quasicrystals \cite{pv}. The above works leave open the following natural research questions, which we tackle here:
\begin{itemize}
    \item In defining Fourier transform on countable uniformly discrete subsets of Euclidean space, one makes an implicit idealization, since in (numerical and physics) applications we never directly work with infinite sets, but are bound to do only \emph{approximations} to Fourier transforms on infinite spaces. It is thus natural to develop effective approximation error bounds for recovery results of structures such as the Fourier spectrum.
    \item While lattices and quasicrystals in Euclidean spaces are structures relevant to crystallographic applications, it is natural to ask general structure recovery results of the same kind, in general geometries, as done in \cite{signoise} in a different framework. In particular, it is natural to generalize such questions to the framework of nonabelian groups of polynomial growth.
\end{itemize} 
In this work, we tackle both questions for the first time, and at the same time, we set a basis for applying geometric group theory tools to recovery questions. In this way, through robustness-type results, we intend to contribute to understanding how probabilistic aspects in a group are determined by its geometry; we refer the reader to \cite{probgroup} for an introduction to these topics. While there is a large and deep theory for the possible limits of discrete groups of polynomial growth \cite{scallim, pansu}, the question of how to perform approximations that allows to ensure also convergence of further structures (such as Fourier transforms) is to our knowledge not developed. We hope that this work will motivate further advances towards a Fourier theory of approximate groups, by providing a concrete use case for such theory in the setting of recovery problems.

\subsection*{Setting and overview}
We first define the property that a "fingerprint" map $\mathcal F$ applied to a random perturbation of a subgroup $H$ of a group $G$, has a deterministic effect. Consider the following situation:
\begin{itemize}
    \item Let $(G,\cdot)$ be a topological group and $H\subset G$ be a subgroup.
    \item We let $\mathcal{D}$ be a fixed set of probability measures over $G$.
    \item We consider a fixed injective linear $*$-homomorphism $\mathcal{F}:\mathcal M_b(G)\to \mathcal B$ where $\mathcal B$ is a $C^*$-algebra and $\mathcal M_b(G)$ is the space of {\it $G$-shift-bounded measures} on $G$, which are defined as Borel measures $\mu$ on $G$ such that for every compact $K\subset G$ there holds $\sup_{g\in G}|\mu|(g\cdot K)<\infty$, $|\mu|$ being the total mass of $\mu$.
\end{itemize} 
For fixed $\mu\in \mathcal D$ we perturb $H\subset G$ as follows. Consider a family of i.i.d $G$-valued random variables $\xi=(\xi_h)_{h\in H}$, defined over a probability space $(\Omega,\mathcal{T},\mathbb{P})$, with fixed common law $\mathbb P_\xi=\mu$, and consider the realization set 
\begin{equation}\label{realization}
\begin{split}
    H_{\mu}(\omega)&:=\{h\cdot\xi_h(\omega):h\in H\}\subset G\qquad\omega\in\Omega.
    \end{split}
\end{equation}
We then use the abbreviated notation 
\[
    \delta_{H_{\xi}(\omega)}:=\sum_{h\in H_{\xi}(\omega)}\delta_{h},
\]
where $\delta_h$ is the Dirac delta at $h\in H_{\xi}(\omega)$. Below we give the definitions of interest in this paper, within an abstract setting, which will be sepecialized in \Cref{def:recover}.
\begin{defi}\label{def:almostsure}
    With notations and definitions as above, we introduce the following notions:
    \begin{enumerate} [(A)]
        \item For real numbers $\mathsf{err}\geq 0$ and $\delta\in[0,1)$ we say that \emph{the $\mathcal F$-action on $H\subset G$ is $(\mathsf{err},\delta)-$probably approximately determined (PAD) for $\mathcal D-$perturbations} if there exists a (deterministic) function
    \begin{equation}\label{cmu}
        \Phi_H:\mathcal D\to \mathcal B
    \end{equation}
    such that, for all $\mu \in\mathcal D$ for $H_\mu(\omega)$ as in \eqref{realization},
    \begin{equation}\label{cmubis}
        \mathbb P\left\{\left\|\mathcal F(\delta_{H_\mu}) - \Phi_H(\mu)\right\|_{\mathcal B}> \mathsf{err}\right\}\leq\delta.
    \end{equation}
    \item We say that {\em the $\mathcal F$-action on $H\subset G$ is almost-surely determined (ASD) for $\mathcal D-$perturbations} if there exists a (deterministic) function $\Phi$ as in \eqref{cmu} such that for all $\mu\in\mathcal D$ for $H_\mu(\omega)$ as in \eqref{realization} we have
\begin{equation}\label{sep}
    \mathcal{F}(\delta_{H_{\mu}(\omega)})=\Phi_H(\mu),\quad \text{for $\mathbb P$-almost every $\omega$.}
\end{equation}
    \end{enumerate}
\end{defi}

In particular, the notion of $(0,0)-$PAD coincides with the one of ASD.
\medskip

{\bfseries Motivation for our hypotheses on $\mathcal F$.} While working with noise-robustness of fingerprints $\mathcal F$ in high generality may be interesting, some extra structure on $\mathcal F$ is crucial for being able to prove explicit recovery results. This is also true in different settings for signal recovery problems such as \cite{signoise}. For our work, a good reason to work with the $*$-morphism hypothesis on $\mathcal F$ is that it actually allows to find explicit formulas on $\Phi_H$ and to prove at the same time {\em determination} to {\em recovery properties} (see \Cref{def:recover} below). Rather than working with general $*$-morphisms, we chose to restrict to the case of Fourier transforms in this work as a first step, and it is an interesting direction of future research to extend the results to more general $\mathcal F$.  

\medskip
{\bfseries Explicit formulas for $\Phi_H$.} In case $G, H$ are finite groups, it seems to be more often the case that only $(\mathsf{err},\delta)-$PAD properties hold. But it is instructive to keep in mind that if almost-sure deterministic action \eqref{sep} holds for this case, then the underlying deterministic $\Phi_H$ can be made more explicit. Indeed, by taking the expected value in \eqref{sep} we get the following:
\begin{equation}\label{eq:phihfinite}
    \Phi_H(\mu)=\mathbb E\left[\mathcal F(\delta_{H_\mu})\right] = \mathcal F\left(\mathbb E[\delta_{H_\mu}]\right) = \mathcal F(\delta_H * \mu)=\mathcal{F}(\delta_H)\mathcal{F}(\mu).
\end{equation}
Above, we interpret $\delta_{H_\mu}$ as a random measure, we used linearity of expectation and of $\mathcal F$, and the fact that $\mathcal F$ is a $*$-morphism from finite measures on $G$ with convolution product to $\mathcal{B}$. 
\medskip

For the case of infinite $G$ and $H$, we can prove almost-sure recoverability via ergodicity of $H$-action in combination with a law of large numbers. However in general the manipulations as in the finite case are not warranted, and due to this, we focus on specific $\mathcal F$ defined as the Fourier transform on translation-bounded measures. To define $\mathcal F$ we fix a distance compatible with the metric on $G$ and let $B_R$ be the ball of radius $R$ centered at the identity. Then, for fixed $X\subset G$, we define
\begin{equation}\label{eq:fourierdef}
    \mathcal F(\delta_X)(\pi) :=\lim_{R\to\infty}\frac1{f_X(R)}\sum_{x\in X\cap B_R}\pi(x),    
\end{equation}
where $f_X(R)\sim |X\cap B_R|$ denotes the asymptotic ball growth in $X$. If $H\subset G$ is a discrete infinite group or a lattice in a nilpotent $G$, then $f_H(R)=C_H R^k$, in which $k$ is the dimension of $H$. Then we consider the case of tame $\mu$, which have the moment condition $\mathbb E_\mu[|\xi|^{k+\epsilon}]<\infty$ for some positive $\epsilon$. Then for $\mathbb P$-almost all $\omega\in\Omega$ we may take $f_{H_\mu(\omega)}=f_H$.

In the above case, we can build an explicit description of the function $\Phi$ in \Cref{def:almostsure} (B). Indeed, assuming \eqref{sep} holds, we get, for fixed $\mu\in\mathcal D$ and $\pi\in \widehat G$:
\begin{eqnarray}
    \Phi_H(\mu)(\pi)&=&\mathbb E\left[\mathcal F(\delta_{H_\mu})(\pi)\right]=\mathbb E\left[\lim_{R\to\infty} \frac1{C_H R^k} \sum_{x\in H_{\mu}(\omega)\cap B_R}\pi(x) \right]\nonumber\\
    &=& \mathbb E\left[\lim_{R\to\infty} \frac 1{C_H R^k}\sum_{h\in H\cap B_R}\pi(h\cdot \xi_h) \right]=\lim_{R\to\infty} \frac 1{C_H R^k}\sum_{h\in H\cap B_R}\pi(h) \mathbb E\left[\pi(\xi_h) \right]\nonumber\\
    &=&\mathcal F(\delta_H)(\pi)\int_G \pi(x)d\mu(x),\label{eq:phihinf}
\end{eqnarray}
with the integral taken in the sense of Bochner. The first equality in the above second line is based again on the moment hypothesis on $\mu$, and will be justified in \Cref{lemmalimxiDFT-nil}. Then we used the fact that unitary representations $\pi$ are group homomorphisms, and the fact that the $\xi_h$ are i.i.d. with law $\mu$.
\medskip

{\bfseries Main Results.} We now specialize to the setting of our main results. In the above paragraphs we have seen that $\Phi_H$ has an explicit expression, with which the property in \Cref{def:almostsure} amounts to an -- approximate in case (A) and almost sure in case (B) -- {\em factorization} property of the noise. If the factors multiplying $\mathcal F(\delta_H)$ on the right in \eqref{eq:phihfinite} or \eqref{eq:phihinf} are invertible, then we can actually recover (approximately or almost-surely) $\mathcal F(\delta_H)$ from a single noisy measurement. Further, if we restrict to $\mathcal F$ as the above Fourier transforms, then we have natural choices of $\mathcal B$ as mentioned in the below definition.

\begin{defi}\label{def:recover}
\begin{enumerate}[(A)]
    \item If $H<G$ are finite groups and $\mathcal F: L^2(G)\to L^2(\widehat G)$ is the normalized Fourier transform, we say that $\mathcal F$ can be $(\mathsf{err},\delta)-$approximately recovered on $H$ from $\mathcal D-$perturbations if case (A) of \Cref{def:almostsure} with holds with $\Phi_H$ as in \eqref{eq:phihfinite}.
    \item If $G$ is an abelian or nilpotent, locally compact, simply connected Lie group, $H<G$ is a $k$-dimensional discrete infinite subgroup, and $\mathcal F:\mathcal M_p(G)\to L^\infty(\widehat G)$ is defined as in \eqref{eq:fourierdef}, we say that $\mathcal F$ can be almost-surely recovered on $H$ from $\mathcal D-$perturbations if case (B) of \Cref{def:almostsure} holds with $\Phi_H$ as in \eqref{eq:phihinf}.
\end{enumerate}
\end{defi}
Then our main results are summarized in the next two theorems.

\begin{thma}\label{Thm A}
Let $G$ be a finite group and $H\subset G$ be a normal subgroup with $|H|\geq 24$. Let $\mathcal F$ be the (normalized) Discrete Fourier Transform of $G$. Let $\mathcal{D}$ be the class of probability measures $\mu$ on $G$ for which $C=C(\mu, G):=\sup_{\pi\in\widehat{G}}\mathsf{Var}_{\xi\sim\mu}[\pi(\xi)]<\infty$, where $\widehat{G}$ is the space of all unitary irreducible representations of $G$. Then for fixed $\delta\in(0,1)$, we have that the laws from $\mathcal{D}$ are $(\mathsf{err},\delta)$-probably approximately recoverable through $\mathcal F$ from perturbations with law in $\mathcal D$, where 
\begin{equation}\label{apperrbound}
\mathsf{err}=4C\frac{|H|^{7/2}}{|G|^2}\left(\sqrt{\frac{3}{\delta}}+\frac{C}{|H|^{1/2}}\right)\max_{\pi\in\widehat{G}}d_{\pi}.
\end{equation}
\end{thma}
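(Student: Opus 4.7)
The plan is to write $\mathcal F(\delta_{H_\mu})(\pi)$, for each irreducible $\pi\in\widehat G$, as a sum of independent centred matrix-valued random variables around the deterministic mean $\Phi_H(\mu)(\pi)=\mathcal F(\delta_H)(\pi)\,\mathcal F(\mu)(\pi)$ of \eqref{eq:phihfinite}, bound its Hilbert--Schmidt second moment by a variance sum, and convert the resulting estimate into a tail bound via Chebyshev's inequality. Normality of $H$ enters through Schur's lemma: it forces $\mathcal F(\delta_H)(\pi)$ to be a scalar multiple of the identity on $V_\pi$ (in fact zero unless $\pi$ is trivial on $H$), so that $\Phi_H(\mu)$ takes a clean form. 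The hypothesis $|H|\geq 24$ will appear as a mild lower bound ensuring that a bias-type correction stays subdominant.

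Expanding and subtracting, the centred noise is
\[
    \mathcal F(\delta_{H_\mu})(\pi)-\Phi_H(\mu)(\pi)=\tfrac{1}{|G|}\sum_{h\in H}\pi(h)\bigl(\pi(\xi_h)-\mathbb E\pi(\xi_h)\bigr),
\]
a sum of $|H|$ independent centred $d_\pi\times d_\pi$ random matrices. Independence and unitarity of $\pi(h)$ then yield
\[
    \mathbb E\bigl\|\mathcal F(\delta_{H_\mu})(\pi)-\Phi_H(\mu)(\pi)\bigr\|_{HS}^2 = \tfrac{1}{|G|^2}\sum_{h\in H}\mathbb E\|\pi(\xi_h)-\mathbb E\pi(\xi_h)\|_{HS}^2 \leq \tfrac{|H|\,d_\pi\,C}{|G|^2},
\]
where the last inequality uses $\mathsf{Var}_\mu[\pi(\xi)]\leq C$ together with $\|\cdot\|_{HS}^2\leq d_\pi\|\cdot\|_{op}^2$. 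Summing over $\pi\in\widehat G$ with Plancherel weights $d_\pi$ and invoking $\sum_\pi d_\pi^2=|G|$ together with a coarse pullout of $\max_\pi d_\pi$ produces a second-moment bound on $\mathbb E\|\mathcal F(\delta_{H_\mu})-\Phi_H(\mu)\|_{L^2(\widehat G)}^2$ of the polynomial shape appearing in \eqref{apperrbound}.

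Applying Chebyshev to the non-negative random variable $\|\mathcal F(\delta_{H_\mu})-\Phi_H(\mu)\|_{L^2(\widehat G)}^2$ gives $\mathbb P(\|\cdot\|_{L^2(\widehat G)}>t)\leq \mathbb E\|\cdot\|^2/t^2$; equating the right-hand side with $\delta$ and solving for $t$ produces the $\sqrt{3/\delta}$ factor in \eqref{apperrbound}. The additive correction $C/|H|^{1/2}$ inside the parenthesis I expect to come from the standard Chebyshev split $\|X\|\leq \mathbb E\|X\|+(\mathsf{Var}\|X\|/\delta)^{1/2}$, with $\mathbb E\|X\|$ controlled via Jensen by $\sqrt{\mathbb E\|X\|^2}$; the $|H|\geq 24$ hypothesis makes the ratio of this bias term to the fluctuation term small.

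The main obstacle is the careful bookkeeping of normalizations and $d_\pi$ factors. One must fix consistent conventions for the normalized DFT, for the Plancherel $L^2(\widehat G)$-inner product, and for the scalar $\mathsf{Var}_\mu[\pi(\xi)]$ on matrix-valued random variables, and then track the $d_\pi$-dependent constants when converting between Hilbert--Schmidt, operator and trace norms. The explicit appearance of $\max_{\pi\in\widehat G}d_\pi$ and the large exponent $|H|^{7/2}/|G|^2$ in \eqref{apperrbound} are symptomatic of applying clean but non-sharp inequalities such as $\|M\|_{HS}\leq\sqrt{d_\pi}\,\|M\|_{op}$ in the passage from per-representation bounds to a single $L^2(\widehat G)$ estimate.
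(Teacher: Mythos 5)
Your proposal is a genuinely different route from the paper's, and it is a reasonable one, but it does not reproduce the theorem's stated error bound; the two bounds are incomparable and for small $\delta$ yours is strictly worse. The paper's proof (via \Cref{basicthmH}) decomposes the discrepancy entrywise into a deterministic factor $\widehat{F_{il}}$ and a random factor $\widehat{G_{lj}}$, applies Cauchy--Schwarz, evaluates $\|\widehat{F_{il}}\|_{L^2(\widehat\Gamma)}$ exactly through Schur orthogonality and Clifford's theorem (this is where normality of $H$ is actually exploited in the estimate, not merely to make $\widehat{\delta_H}(\pi)$ scalar), and controls $\|\widehat{G_{lj}}\|_{L^2(\widehat\Gamma)}$ through the autocorrelation $\gamma_{lj}$ using the three-way partition of $H$ in \Cref{parti3nonabel} to manufacture independence. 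The decisive difference is \emph{where} Chebyshev is applied. The paper applies it at the autocorrelation level, so the threshold $\varepsilon$ enters the final squared-norm bound \emph{linearly}; with $\delta\asymp C^2/(\varepsilon^2|H|)$ one gets $\varepsilon\asymp C/\sqrt{\delta|H|}$ and hence the $\sqrt{3/\delta}$ factor in \eqref{apperrbound}. You instead apply Markov directly to $\|\mathcal F(\delta_{H_\mu})-\Phi_H(\mu)\|^2_{L^2(\widehat G)}$, whose expectation is $\asymp |H|C/|G|^2$, giving $\|\cdot\|^2\lesssim |H|C/(|G|^2\delta)$ with a $1/\delta$ factor. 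Since $1/\delta$ outgrows $1/\sqrt{\delta}$, for $\delta$ small and $|G|$ large compared with $|H|^3\max_\pi d_\pi$ your bound exceeds the stated $\mathsf{err}$, so your argument proves a correct but \emph{different} $(\mathsf{err}',\delta)$-PAD statement rather than the one claimed.

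Two further points. First, your explanation of the hypothesis $|H|\geq 24$ is off: it has nothing to do with a bias term in your Chebyshev split. It is the quantitative condition $|H|\geq 6L/(L-3)$ with $L=4$ needed for the three-way partition $H=H_{g,0}\cup H_{g,1}\cup H_{g,2}$ in \Cref{parti3nonabel} and \Cref{nonabelvariance}; your direct second-moment argument would impose no lower bound on $|H|$ at all, which is another symptom that you are not proving the same estimate. Second, the step $\mathbb E\|\pi(\xi)-\mathbb E\pi(\xi)\|_{HS}^2\leq d_\pi C$ depends on an unstated convention for $\mathsf{Var}[\pi(\xi)]$; if $C$ is a per-entry bound (as the usage of $4C^2$ in \Cref{nonabelvariance} suggests) then $\mathbb E\|\pi(\xi)-\mathbb E\pi(\xi)\|_{HS}^2=\sum_{l,j}\mathsf{Var}[\pi_{lj}(\xi)]\leq d_\pi^2 C$, which shifts your exponents again. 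Your approach would be worth developing as an alternative, cleaner bound in the regime $|G|\lesssim|H|^3$, but it does not establish \eqref{apperrbound}.
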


Note that $C=C(\mu,G)$ represents the main dependence of our estimates on the geometry of $G$.
\medskip

{\bfseries Possible application cases of Theorem A.} In numerical applications, such as for numerical simulations of materials, or for signal processing via finite codes, amongst others, one most commonly one has to control error bounds for finite discretizations, with special focus on asymptotics and scalability of such bounds. For the question of robustness to random perturbations, \Cref{Thm A} provides such control, and allows to find what relationships between the cardinalities of $H$ and $G$ allow small error in asymptotics of interest. Thus this result is of independent interest, and in this paper we use this theorem for our \Cref{Thm B} below.

\medskip
Our second main result is concerned with extending the known recovery results for lattices in Euclidean spaces to more general groups. We consider the natural case of infinite discrete subgroups of nilpotent Lie groups:
\medskip

\begin{thma}\label{Thm B} 
Let $G$ be a $d$-dimensional lcsc nilpotent Lie group, and assume that $H<G$ is a $k$-dimensional lattice with averaged Fourier transform denoted $\mathcal F_k$. For fixed $\varepsilon>0$, let $\mathcal{D}$ be the set of probability measures on $G$ with finite $k+\varepsilon$ moment. Then $H$ is almost-surely recoverable through $\mathcal{F}_k$ from perturbations with law in $\mathcal D$.
\end{thma}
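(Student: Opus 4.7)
The plan is to establish, for $\mathbb{P}$-a.e.\ $\omega\in\Omega$ and each $\pi\in\widehat G$, the factorization $\mathcal F_k(\delta_{H_\mu(\omega)})(\pi)=\mathcal F_k(\delta_H)(\pi)\int_G\pi(x)\,d\mu(x)$ pointwise in $\pi$, and then to upgrade this to almost-sure equality as elements of $L^\infty(\widehat G)$ via separability of $\widehat G$.

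First I would prove a boundary-control lemma (the content signaled in \Cref{lemmalimxiDFT-nil}): thanks to the moment bound $\mathbb E_\mu[|\xi|^{k+\varepsilon}]<\infty$, a combination of Markov and Borel--Cantelli ensures that for $\mathbb P$-a.e.\ $\omega$ only $o(R^k)$ many points $h\in H\cap B_R$ satisfy $h\,\xi_h(\omega)\notin B_R$ (and symmetrically), using a shell estimate $|H\cap(B_{R+r}\setminus B_{R-r})|=O(R^{k-1}r)$ coming from polynomial growth of the lattice. Consequently the normalization $f_{H_\mu(\omega)}(R)\sim C_H R^k$ announced before \eqref{eq:phihinf} holds almost surely, and
\begin{equation*}
\frac{1}{C_H R^k}\sum_{x\in H_\mu(\omega)\cap B_R}\pi(x)-\frac{1}{C_H R^k}\sum_{h\in H\cap B_R}\pi(h\,\xi_h(\omega))\longrightarrow 0 \text{ in operator norm, a.s.}
\end{equation*}

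Second, using that $\pi$ is a group homomorphism so $\pi(h\,\xi_h)=\pi(h)\pi(\xi_h)$, I would split
\begin{equation*}
\frac{1}{C_H R^k}\sum_{h\in H\cap B_R}\pi(h)\pi(\xi_h)=\frac{1}{C_H R^k}\sum_{h\in H\cap B_R}\pi(h)\,\mathbb E[\pi(\xi)]+\frac{1}{C_H R^k}\sum_{h\in H\cap B_R}\pi(h)\bigl(\pi(\xi_h)-\mathbb E[\pi(\xi)]\bigr).
\end{equation*}
The first piece converges to $\mathcal F_k(\delta_H)(\pi)\int\pi\,d\mu$ by definition \eqref{eq:fourierdef} and continuity of the Bochner integral. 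For the second, the summands $Y_h:=\pi(h)(\pi(\xi_h)-\mathbb E[\pi(\xi)])$ are independent, centered, and uniformly bounded in operator norm (because $\pi$ is unitary), so a Chebyshev second-moment estimate yields $\mathbb E\|(C_H R^k)^{-1}\sum_h Y_h\|^2=O(R^{-k})$; evaluating along $R_n=2^n$ with Borel--Cantelli, followed by monotonicity control on the intervals $[R_n,R_{n+1}]$, produces a.s.\ vanishing. Combining both steps gives the pointwise-in-$\pi$ almost-sure equality.

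For the upgrade to $L^\infty(\widehat G)$, since $G$ is lcsc nilpotent, Kirillov theory realizes $\widehat G$ as a standard Borel space admitting a countable dense family $\{\pi_n\}$; intersecting the countably many full-measure events $\Omega_{\pi_n}$ gives a single $\Omega'$ with $\mathbb P(\Omega')=1$ on which the equality holds at every $\pi_n$, and continuous dependence of both sides on the Kirillov parameter then propagates this to equality in $L^\infty(\widehat G)$. The main obstacle I anticipate lies in Step 1: in the lcsc nilpotent setting the metric is Carnot--Carathéodory and the volumes of balls scale with the homogeneous dimension of $G$, which generically differs from the topological dimension $d$; one must carefully align this with the growth exponent $k$ of the $k$-dimensional lattice $H$ (in the sense of polynomial growth of $|H\cap B_R|$) so that the shell estimate interacts correctly with the moment condition, since Euclidean intuition can be misleading here. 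Once this compatibility is pinned down, Steps 2 and 3 reduce to standard weighted-SLLN and separability arguments.
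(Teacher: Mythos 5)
Your proposal is essentially a direct continuum argument in the spirit of the Euclidean recovery proofs in \cite{Yakir, pv}: boundary control via the moment bound, a weighted SLLN for the centered operator sums along dyadic scales, and a separability upgrade over $\widehat G$. The paper takes a genuinely different route. Rather than run the law of large numbers directly on the continuum sums, the authors approximate $G$ and $H$ by ad-hoc finite groups $\Gamma_{km,N}^d\supset H_N$ and discretize the noise $\xi\mapsto\xi^{(km,N)}$; they then invoke the \emph{quantitative} finite-group recovery of \Cref{basicthmH} (\Cref{Thm A}) to get an explicit high-probability error bound at each finite level, and finally pass to the limit via the weak convergence of \Cref{nilDFT-FT} and \Cref{lemmalimxiDFT-nil}, only at the very end letting $\varepsilon_N\to 0$ with $1/|H_N|\ll\varepsilon_N$ to squeeze out almost-sure recovery. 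Your route is shorter and more elementary (the LLN is performed once in the continuum, with the finite-group machinery bypassed entirely); what the paper's detour buys is exactly what it advertises: an effective, computable error estimate for every finite discretization (\eqref{asbasic-nil}, \eqref{dualnorm-l2}), not just a qualitative limiting statement. Your Step 1 is precisely the escape-mass argument used in \Cref{lemmalimxiDFT-nil}, and your concern about aligning growth exponents with the Carnot--Carath\'eodory geometry is legitimate but is resolved in the paper by working with Mal'cev coordinate boxes $[-R/2,R/2)^r$ (for which $|H\cap B_R^r|\sim R^r$) rather than CC-balls, so no homogeneous-dimension correction is needed.

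Two points in your Step 2 deserve tightening. First, the inequality $\mathbb E\|(C_H R^k)^{-1}\sum_h Y_h\|^2=O(R^{-k})$ does not follow from Chebyshev if $\|\cdot\|$ is the operator norm on $\mathcal B(\mathcal H_\pi)$, since that is not an inner-product norm and the cross terms do not cancel; it works for the Hilbert--Schmidt norm in the finite-dimensional case, but for the (generically infinite-dimensional, per Kirillov) representations of a nonabelian nilpotent $G$ one must pair against test functions $\varphi,\psi\in C_c(\R^s)$ first, exactly as the paper does throughout \Cref{ssec:recovnil} with \Cref{weaknilconv}, and then run the scalar Chebyshev/Borel--Cantelli estimate with a subsequent separability argument over $\varphi,\psi$. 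Second, the limit $\mathcal F_k(\delta_H)(\pi_l)$ appearing in your "first piece" is itself a nontrivial object whose existence needs justification; in the nonabelian case, after pairing with test functions, the contributing $h\in H\cap B_R$ are a lower-dimensional slice carrying oscillatory phases, so the normalization by $R^k$ and the existence of the limit are not automatic and are handled implicitly by the periodization/Riemann-sum structure in \Cref{nilDFT-FT}. Neither issue is fatal, but both require care and are essentially where the paper's technical work is concentrated.
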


We obtain \Cref{Thm B} from the result of \Cref{Thm A}, via a suitable discrete to-continuum approximation process. Precisely, let $H<G$ be as in \Cref{Thm B}. In \Cref{sec: eucrecov}, \Cref{sec:heis} and \Cref{ssec:recovnil} below, we prove that there is a sequence of finite groups $H_n\subset G_n$ which converge to $H$ and $G$ respectively, in Gromov-Hausdorff topology, and such that the Fourier transforms $\mathcal{F}(\delta_X)$ and $\mathcal{F}(\delta_{X_{\xi}})$ can be approximated by $\mathcal{F}_{\Gamma_n}(\delta_{H_n})$ and $\mathcal{F}_{\Gamma}(\delta_{(H_n)_{\xi}})$, respectively; for more details see \Cref{DFT-FT}, \Cref{DFT-FT-Heis}, \Cref{lemmalimxiDFT-Heis} and \Cref{nilDFT-FT} below. Thus, from a computational viewpoint, \Cref{Thm A} not only gives us, in the limit, a noise-recovery such as \Cref{Thm B} but also allows us to obtain approximations of the recovery of $\mathcal{F}(\delta_X)$ by $\mathcal{F}_{\Gamma_n}(\delta_{H_n})$ with a control for the error bounds due to its explicit (and computable) form \eqref{apperrbound}.
\medskip

{\bfseries Motivation for the hypothesis that $G$ is nilpotent in Theorem B.} Theorem B describes recovery of discrete subgroups in a class of Lie groups $G$, which is a second important model case besides the result of \Cref{Thm A} for finite $G$. The motivation for the hypotheses of Theorem B is based on the following restriction of our proofs:
\begin{enumerate}
    \item For the good definition of $\mathcal F$ as in \eqref{eq:fourierdef}, we require $X\subset G$ to have polynomial growth.
    \item Our almost-sure recovery proof is based on a version of the central limit theorem to $\xi_p$ for $p\in H$, which ensures \eqref{sep} almost surely. For this, we require $H$ to be infinite.
    \item In order use an averaging property of groups $H$, and to formulate the bound on $k+\epsilon$ moments for the law of $\xi$, we require that $H$ have polynomial growth $|B_H(x,R)|=O(R^k)$.
    \item In order to apply the general result of \Cref{Thm A}, we approximate $G$ via finite groups $G_n$ as $n\to\infty$. We also require that there exist subgroups $H_n<G_n$ converging to $H$.
\end{enumerate}
Due to points 2. and 4. we require that $|G_n|, |H_n|\to\infty$ and due to 1. and 3. we require that $G$ is of polynomial growth. These conditions on the $G_n$, and the fact that $G$ is not discrete, already imply that $G$ should be a finite-dimensional nilpotent Lie group, due to the following known result:
\begin{thm}[Corollary of Thm. 3.2.2 from \cite{scallim}]\label{prop:bft}
    Suppose that $0<r_n\leq R_n$ are two unbounded sequences and consider a sequence $(G_n,d)$ of discrete metric groups with ball growth $|B_{G_n}(x,R_n)|=O(R_n^q)$. Then $(G_n,d/r_n)$ have a subsequence converging for the pointed Gromov-Hausdorff topology to a connected nilpotent Lie group equipped with the left-invariant Carnot-Caratheodory metric.
\end{thm}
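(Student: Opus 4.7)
The plan is to combine Gromov's pre-compactness theorem for pointed metric spaces of uniform polynomial growth with the Gleason--Yamabe solution of Hilbert's fifth problem, in order to identify the resulting pointed limit as a connected nilpotent Lie group. First I would rescale each metric as $d_n := d/r_n$. The growth hypothesis on balls of radius $R_n$ then becomes $|B_{(G_n,d_n)}(x, R_n/r_n)| = O((R_n/r_n)^q)$, a polynomial bound that holds on scales tending to $+\infty$. Combined with left-invariance of $d_n$, this gives uniform doubling at every fixed scale once $n$ is large enough, which is precisely the hypothesis of Gromov's compactness theorem for pointed metric spaces. Thus, along a subsequence, $(G_n, d_n, e_n)$ converges in the pointed Gromov--Hausdorff topology to a pointed proper metric space $(X, d_\infty, o)$.

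Next I would transport the group structure to the limit. Each left translation $L_g$ is an isometry of $(G_n, d_n)$ and the identity elements serve as basepoints, so using equicontinuity of isometric families together with an Arzel\`a--Ascoli-type diagonal extraction one obtains continuous group operations on $X$; this is a standard construction and constitutes the first part of \cite[Thm.~3.2.2]{scallim}. The resulting topological group $X$ carries a left-invariant compatible metric $d_\infty$, which is a length metric since $r_n \to \infty$, and inherits from the renormalization a canonical one-parameter family of dilations.

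The main technical hurdle is then to upgrade $X$ from a topological group to a Lie group and to identify it as nilpotent. The polynomial bound on ball growth rules out any accumulation of small subgroups near the identity, because such a subgroup would multiply the local cardinality beyond the permitted polynomial rate in the approximants. Hence $X$ is a locally compact group with no small subgroups, so by Gleason--Yamabe it is a Lie group; connectedness follows from the length structure on $d_\infty$, and nilpotency from the Lie-algebraic form of Gromov's polynomial growth theorem applied to the Lie algebra of $X$. Finally, the dilations produced by the sequential rescaling induce a grading on that Lie algebra, and the standard argument behind Pansu's identification of asymptotic cones of nilpotent groups shows that $d_\infty$ coincides with the left-invariant Carnot--Carath\'eodory metric associated to the horizontal subspace of the grading. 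Pinning down $X$ as a Lie group via the no-small-subgroups property is the deepest input and the step I expect to be the main obstacle, which is precisely why the statement is recorded here as a citation of \cite[Thm.~3.2.2]{scallim} rather than proved from scratch.
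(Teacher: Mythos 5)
The paper does not give a proof of this statement: it is recorded purely as a citation, as a corollary of Theorem~3.2.2 in \cite{scallim}, and the authors do not reproduce the argument. So there is no internal proof to compare against, and the relevant question is whether your sketch would actually deliver the result.

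Your outline follows the classical route (Gromov precompactness, transport of the group structure via Arzel\`a--Ascoli, no-small-subgroups plus Gleason--Yamabe, then Gromov's polynomial-growth theorem and Pansu), and the first two stages are fine. The gap is in the nilpotency step. Gromov's polynomial growth theorem is a statement about a single, infinite, finitely generated group of polynomial growth; here one has a \emph{sequence} of finite groups $G_n$ with a polynomial volume bound only at a single (diverging) scale $R_n$. Nothing forces any individual $G_n$ to have polynomial growth in the usual sense, and the limit $X$ is at first only a locally compact group to which the discrete Gromov theorem does not apply, nor is it meaningful to ``apply Gromov's theorem to the Lie algebra of $X$''. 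This is precisely why \cite{scallim} routes the argument through the Breuillard--Green--Tao structure theorem for approximate groups: the balls $B_{G_n}(x,R_n)$ are $K$-approximate groups with uniformly controlled doubling, and BGT shows they are commensurable with coset nilprogressions of bounded rank and step, which is what lets one extract a nilpotent Lie limit uniformly in $n$. Without this (or an equivalent quantitative Gleason--Yamabe/Montgomery--Zippin input as in Hrushovski's work), the ``no small subgroups'' claim and the identification of the limit as nilpotent are not justified. Your honest flagging of this step as the main obstacle is correct; the fix is to replace the appeal to Gromov's theorem by BGT, which is exactly the content of the cited theorem in \cite{scallim}. A smaller point: rescaling the metric to $d/r_n$ leaves cardinalities of balls unchanged, so the inequality you write, $|B_{(G_n,d_n)}(x,R_n/r_n)|=O((R_n/r_n)^q)$, is off by a factor of $r_n^{q}$; what you actually get, and what suffices for Gromov precompactness, is a uniform doubling bound on the ratio $|B(x,2s r_n)|/|B(x,s r_n)|$ coming from the one-sided polynomial bound at scale $R_n$ together with left-invariance.
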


{\bfseries Possible application cases of Theorem B.} The special case of $G=\mathbb R^d$ and $H$ a lattice describes the stability of spectral properties of a crystal under perturbations (in case $d\leq 3$ especially), and other directions for application may come from the case that $H$ is interpreted as a \emph{code}, or as the discretization of a signal with values in $G$. This setup for $G=\mathbb R^d$ with large $d$ is the traditional setup for lattice-based cryptography, which covers an important percentage of successful quantum-resistant cryptography algorithms \cite{nistreport}. We point out here that the same applications also may be extended to the more general case that $G$ is a nilpotent group. To see why this may be interesting to develop, it is worth mentioning that Heisenberg groups (in the simplest case) and in fact all nilpotent Lie groups represent the state space of quantum systems in the presence of external magnetic fields (see  e.g. \cite{quan} for an introduction, and formalism for an interpretation for the Engel group in \cite{engel}, as well as \cite{step3} for a broader description). For these general $G$, similarly to $G=\mathbb R^d$ the role of $H$ is that of a possible discretization of signals encoded as quantum states of a system, and \Cref{Thm B} gives a possible way to recover corrupted signals of this type.

\medskip

{\bfseries Some natural open directions.} Note that a direct extension of \Cref{Thm B}, would be to include the case that $G$ is itself a lattice in a nilpotent Lie group $G'$, and $H$ is a sublattice. In this case we can work with \Cref{Thm B} directly in $G'$ itself. A second extension, which seems straightforward but for which we did not find good enough motivations to include it in this work, would be to consider the product of $H,G$ with finite groups $H'<G'$.
\medskip

We now indicate some natural extensions of the result of \Cref{Thm B}, which seem interesting to pursue in future work, but would require fundamental changes in our proof strategy.

\begin{itemize}
    \item Quasicrystals $H$ in a nilpotent Lie group.
    \item Sublattices of infinite lattices in a nilpotent Lie group.
    \item Infinite groups in amenable groups of intermediate or exponential growth.
\end{itemize}

\noindent{\bf Some short words on notation. }
Now we include some basic definitions and notations which we will use throughout this work. We use the notation $|\cdot|$ indistinguishably for the Euclidean norm in $\mathbb{R}^d$, for the module of a complex number, for the Lebesgue measure, and the cardinality of a finite set; this will be clear in each context. As usual we denote by $B(x,R):=\{y\in\mathbb{R}^d: |x-y|<R\}$ the euclidean open ball centered at $x\in\mathbb{R}^d$ with radius $r>0$ and we use the notation $B_R:=B(0,R)$.
\medskip

For a Banach space $\mathcal{B}$, we write $\|\cdot\|_{\mathcal{B}}$ for a norm in $\mathcal{B}$. If $\mathcal{H}$ is a Hilbert space, we write $\langle\cdot,\cdot\rangle_{\mathcal{H}}$ for the inner product in $\mathcal{H}$. Given a measure space $\mathcal{M}$, we use the notation $L^p(\mathcal{M})$ for the Banach space of $p$-integrable functions; in the case when $\mathcal{M}$ is finite, to emphasize its discrete nature we write $\ell^p(\mathcal{M})$.
\medskip 

We usually denote a diffeomorphism between manifolds or an isomorphism between Hilbert spaces by $\cong$. This also will be clear in each situation where we use $\cong$ as an equivalence. Finally, the expression $C_1(M)\ll C_2(N)$ and $\varepsilon\lesssim 1$ will mean that $C_2(N)/C_1(M)\to\infty$ from sufficiently large values of $M,N$, and that $\varepsilon<C$ for some suitable constant $C$, respectively.
\medskip

\noindent{\bf Organization of the paper. }In \Cref{sec: recovfingr}, we consider the problem of quantitative recovery in the most general setting. In \Cref{ssec: repfingr} we review basic material on representations of finite groups and then, in \Cref{ssec: DFT}, we give some basics about the Fourier transform in finite groups, which will be necessary to state our recovery results. The core of this section is \Cref{ssection: nonabelFT}, where we prove \Cref{Thm A} (see \Cref{basicthmH}).
\medskip

In \Cref{sec: recovgennil} and \Cref{ssec:recovnil}, we are dedicated to proving \Cref{Thm B}. We provide some necessary facts about nilpotent Lie groups, induced representations, and Kirillov's orbit method as machinery to obtain irreducible representations in these Lie groups. Then we state and prove the main result in this section, namely, \Cref{nilrecov}.
\medskip

Finally in \Cref{sec: examples} we provide examples of the above in the most canonical abelian and non-abelian instances, namely, the Euclidean space and the Heisenberg group, where some explicit computations are done. 
\medskip

\section{Approximate recovery in finite groups}\label{sec: recovfingr}

\subsection{Background on Representations of Finite Groups}\label{ssec: repfingr}

This section establishes some basic terminology and the necessary machinery of Fourier Analysis in Finite (non-Abelian) Groups. Given a finite-dimensional vector space $V$ over $\C$, define by $GL(V)$ be the vector space of all the linear invertible linear transformations $T:V\to V$; this space is naturally identified with the space of invertible matrices $GL(n,\C)$, where $n=\mathsf{dim}(V)$. Throughout this section, $\Gamma$ will denote a finite group. A homomorphism $\pi:\Gamma\to GL(V)$ is called a (finite dimensional) {\em representation} of $\Gamma$. Thus, for every $g\in \Gamma$, $\pi(g)$ is a matrix whose complex entries are denoted by $\pi_{ij}(g)$. The dimension of $V$ is denoted by $d_{\pi}$ and is called the {\em degree} (or {\em dimension}) of the representation $\pi$. If $\pi$ takes values into the set of unitary operators $\mathcal{U}(V)\cong \mathcal{U}(n,\C):=\{A\in GL(n,\C):\ AA^*=I\}$, then $\pi$ is called a {\em unitary} representation of $\Gamma$; in particular the Hermitian inner product $\langle u,v\rangle:=uv^t,\ u,v\in\C^n$ is preserved under unitary transformations.

\medskip

The building blocks of Fourier Analysis in (nonabelian) groups are the irreducible representations. Given a representation $\pi:\Gamma\to GL(V)$ and a subspace $W\leq V$ which is $\pi$-invariant, i.e, $\pi(g)W\subset W$ for every $g\in \Gamma$, the map $\pi':\Gamma\to GL(W)$ given by $\pi'(g):=\pi(g)|_W$ is called a {\em subrepresentation} of $\pi$. A representation $\pi$ is called {\em irreducible} if its only subrepresentations are the trivial one and $\pi$ itself.

\medskip

We say that two representations $\pi_1:\Gamma\to GL(V_{\pi_1})$ and $\pi_2:\Gamma\to GL(V_{\pi_2})$ are {\em equivalent} if there is a isomorphism $T:V_{\pi_1}\to V_{\pi_2}$ such that $T\pi_1(g)=\pi_2(g)T$ for every $g\in \Gamma$; we denote this equivalence relation by $\pi_1\cong\pi_2$. The following Proposition says that every unitary representation can be decomposed into irreducible components (see Proposition \cite[Chapter 15]{terras}).

\medskip

\begin{prop}\label{irredecomp}
\begin{enumerate}
    \item Every representation is equivalent to a unitary representation.
    
    \item Every unitary representation $\pi: \Gamma\to \mathcal{U}(V_\pi)$ is equivalent to

\begin{equation*}
    \pi_1\oplus\pi_2\oplus\ldots\oplus\pi_r:=\begin{pmatrix}
\pi_1 &  & 0\\
 & \ddots & \\
0 &  & \pi_r
\end{pmatrix}
\end{equation*}

where each block $\pi_j$ is an irreducible subrepresentation of $\pi$. 
\end{enumerate}
\end{prop}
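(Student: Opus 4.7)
The plan is to prove the two parts in order, since part (2) explicitly uses the conclusion of part (1).

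For part (1), I would carry out Weyl's unitarian trick: starting from an arbitrary Hermitian inner product $\langle\cdot,\cdot\rangle$ on $V$, define the averaged pairing
\begin{equation*}
\langle u,v\rangle_\Gamma := \frac{1}{|\Gamma|}\sum_{g\in\Gamma}\langle \pi(g)u,\pi(g)v\rangle.
\end{equation*}
A short check (using left-invariance of the counting measure on $\Gamma$ under the substitution $g\mapsto gh$) shows this pairing is $\pi$-invariant, so with respect to $\langle\cdot,\cdot\rangle_\Gamma$ every $\pi(g)$ is a unitary operator. To package this as an equivalence of representations in the sense of \Cref{irredecomp}, I would pick an orthonormal basis of $(V,\langle\cdot,\cdot\rangle_\Gamma)$ and let $T:V\to V$ be the change-of-basis to an orthonormal basis of the original Hermitian structure; conjugating $\pi$ by $T$ yields a representation valued in $\mathcal U(n,\mathbb C)$ that is equivalent to $\pi$.

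For part (2), I would argue by induction on $d_\pi=\dim V_\pi$, with the case $d_\pi=1$ being trivial since one-dimensional representations are automatically irreducible. For the inductive step, if $\pi$ is already irreducible there is nothing to prove. Otherwise there exists a proper nonzero $\pi$-invariant subspace $W\leq V_\pi$. The key point is that, because $\pi$ is unitary on $V_\pi$, the orthogonal complement $W^\perp$ is also $\pi$-invariant: for $w\in W$, $w'\in W^\perp$ and $g\in\Gamma$ one has
\begin{equation*}
\langle w,\pi(g)w'\rangle = \langle \pi(g^{-1})w,w'\rangle=0
\end{equation*}
since $\pi(g^{-1})w\in W$. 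Thus $V_\pi=W\oplus W^\perp$ as an orthogonal direct sum of $\pi$-invariant subspaces, and $\pi\cong \pi|_W\oplus \pi|_{W^\perp}$ after choosing an orthonormal basis adapted to this decomposition. Since both summands have dimension strictly smaller than $d_\pi$, applying the inductive hypothesis to each and concatenating the block decompositions produces the desired block-diagonal form with irreducible blocks.

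The only real subtlety is keeping the notion of equivalence bookkeeping clean: the statement of \Cref{irredecomp} phrases everything in terms of equivalence of representations (conjugation by an isomorphism $T$), so I must be careful that the change of basis turning $\langle\cdot,\cdot\rangle_\Gamma$ into the standard Hermitian form in part (1), and the change of basis adapted to the orthogonal decomposition in part (2), are consistently composed. Apart from this bookkeeping, there is no substantive obstacle; both arguments rely only on finiteness of $\Gamma$ (for the averaging to make sense) and on the existence of orthogonal complements in a finite-dimensional Hilbert space.
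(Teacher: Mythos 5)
Your proof is correct and is the standard argument (Weyl's unitarian trick for part 1, induction on dimension via unitary-invariance of orthogonal complements for part 2). The paper itself does not prove this proposition but cites it to Terras, and the argument you give is precisely the one found there and in any standard reference, so there is nothing to compare beyond noting that your reasoning is sound.
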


Given two representations $\pi:\Gamma\to GL(V_{\pi})$ and $\rho:\Gamma\to GL(V_\rho)$, consider the space $I(\pi,\rho)$ of {\em intertwining operators} defined by

\[
I(\pi,\rho):=\{T:V_\pi\to V_\rho: \text{T is linear and }T\circ\pi(g)=\rho(g)\circ T, \text{ for every }g\in \Gamma\};
\]

In particular, $I(\pi):=I(\pi,\pi)$ is the set of linear operators $T:V_\pi\to V_\pi$ which commute (under composition) with $\pi(g)$ for every $g\in \Gamma$. The condition that two representations $\pi,\rho$ are equivalent can now be reformulated by requiring that $I(\pi,\rho)$ contains an isomorphism.

The following Lemma is one of the cornerstones in Fourier Analysis in non-abelian groups; we refer to \cite[Chapter 15]{terras} for further details.

\begin{lemma}[Schur's Lemma]\label{schur}
\begin{enumerate}
    \item A unitary representation $\pi$ of $\Gamma$ is irreducible if and only if $I(\pi)$ contains only scalars multiples of the identity.
    
    \item If $\pi$ and $\rho$ are unitary irreducible representations of $\Gamma$, then
    
    \[
    \mathsf{dim}(I(\pi,\rho))= \left\{ \begin{array}{lc}
             1 & \text{if }\pi\text{ and }\rho\text{ are equivalent,} \\
             \\ 0 &  \text{otherwise.}
             \end{array}
   \right.
    \]
\end{enumerate}
\end{lemma}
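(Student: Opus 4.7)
The plan is to prove the two parts in order, using the unitarity hypothesis to get orthogonal complements and the algebraic closure of $\mathbb C$ to guarantee eigenvalues.

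For part (1), I would first treat the easy direction: if $\pi$ is reducible, say $W\leq V_\pi$ is a nontrivial $\pi$-invariant subspace, then unitarity of $\pi$ ensures that $W^\perp$ is also $\pi$-invariant, so $V_\pi=W\oplus W^\perp$ as $\pi$-representations. The orthogonal projector $P_W$ onto $W$ then commutes with every $\pi(g)$ and lies in $I(\pi)$, but it is not a scalar multiple of the identity because $W$ is a proper nonzero subspace. Conversely, if $\pi$ is irreducible and $T\in I(\pi)$, pick any eigenvalue $\lambda\in\mathbb C$ of $T$ (which exists since $V_\pi$ is finite-dimensional over $\mathbb C$). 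The operator $T-\lambda I$ still lies in $I(\pi)$, and its kernel $\ker(T-\lambda I)$ is $\pi$-invariant (if $v$ is in the kernel then $\pi(g)v$ is too, because $T\pi(g)=\pi(g)T$). This kernel is nonzero by choice of $\lambda$, so by irreducibility it must be all of $V_\pi$, giving $T=\lambda I$.

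For part (2), the main observation is that for any $T\in I(\pi,\rho)$, both $\ker T\leq V_\pi$ and $\operatorname{im} T\leq V_\rho$ are invariant subspaces (for $\pi$ and $\rho$ respectively), so irreducibility forces either $T=0$ or $T$ to be an isomorphism. If $\pi\not\cong\rho$, no isomorphism in $I(\pi,\rho)$ exists, so $I(\pi,\rho)=\{0\}$. If $\pi\cong\rho$, fix one isomorphism $T_0\in I(\pi,\rho)$. For any other $T\in I(\pi,\rho)$, the composition $T_0^{-1}T$ lies in $I(\pi)$, hence by part (1) equals $\lambda I$ for some $\lambda\in\mathbb C$, so $T=\lambda T_0$. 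This exhibits $I(\pi,\rho)$ as the one-dimensional span of $T_0$.

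I do not expect any real obstacle here; the argument is classical and the key ingredients are only (a) the existence of an eigenvalue over $\mathbb C$ for any endomorphism of a finite-dimensional space, (b) the fact that unitarity provides invariant orthogonal complements, and (c) the elementary observation that kernels and images of intertwiners are invariant subspaces. The only delicate point is to use unitarity once, in the easy direction of part (1), to produce the complementary invariant subspace; the rest of the proof is formal.
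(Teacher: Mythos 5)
Your proof is correct and is the standard classical argument for Schur's Lemma. The paper itself does not provide a proof of this lemma -- it simply states it and cites \cite[Chapter 15]{terras} -- so there is no in-paper argument to compare against. Your handling of the three key ingredients (existence of an eigenvalue over $\mathbb{C}$, unitarity producing an invariant orthogonal complement, and invariance of kernels and images of intertwiners) is exactly right, and the reduction of part (2) to part (1) via $T_0^{-1}T$ is the usual and cleanest route.
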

By identifying representations under equivalence, we define the {\em dual space} $\widehat{\Gamma}$ of $\Gamma$ as the set of equivalence classes of irreducible unitary representations of $\Gamma$, i.e

\[
\widehat{\Gamma}:=\{[\pi]:\ \pi:\Gamma\to U(V_\pi)\text{ is an irreducible unitary representation of }\Gamma\},
\]
where $[\pi]$ denotes the equivalence class of $\pi$; we simply write $\pi\in\widehat{\Gamma}$ when there is no confusion about the notation. It is well-known that

\begin{equation}\label{dimrep}
    \sum_{\pi\in\widehat{\Gamma}}d_{\pi}^2=|\Gamma|.
\end{equation}

For a given $\pi\in\widehat{\Gamma}$ and a unitary representation $\rho$ of $\Gamma$, we denote by $m(\pi,\rho)$ be the number of times that $\pi$ occurs in the decomposition in irreducible components of $\rho$ given in \Cref{irredecomp}.
\medskip

The entries $\pi_{ij}$ of a representation $\pi:\Gamma\to GL(V)$ can be regarded as elements of $L^2(\Gamma)$ with the usual inner product 

\[
\langle \phi, \psi\rangle_{L^2(\Gamma)}:=\sum_{g\in \Gamma}\phi(g)\overline{\psi(g)}\qquad\text{ for every }\phi,\psi\in L^2(\Gamma).
\]
The following theorem establishes orthogonality relations for the entries of irreducible unitary representations (see Theorem 1 in \cite[Chapter 15]{terras}).

\begin{thm}[The Schur Orthogonality relations]\label{Schurortho}
If $\pi,\rho$ are two inequivalent irreducible finite dimensional unitary representations of $\Gamma$, then

\[
\langle\pi_{ij},\rho_{rs}\rangle=0\qquad\text{ for all }i,j,r,s.
\]

Moreover, if $\delta_{ij}$ denotes the Kronecker delta function, there holds

\[
\langle\pi_{ij},\pi_{rs}\rangle=\frac{|\Gamma|}{d_{\pi}}\delta_{ij}\delta_{rs}.
\]
\end{thm}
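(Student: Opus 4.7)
The plan is to reduce both claims to Schur's Lemma (\Cref{schur}) via the standard group-averaging trick. Given any linear map $T: V_\pi \to V_\rho$, I would form the averaged operator
$$\tilde{T} := \sum_{g\in\Gamma} \rho(g)\, T\, \pi(g^{-1}).$$
A direct substitution $g\mapsto hg$ verifies $\rho(h)\tilde{T} = \tilde{T}\pi(h)$ for every $h\in\Gamma$, so $\tilde{T}\in I(\pi,\rho)$ regardless of the choice of $T$.

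For the inequivalent case $\pi\ncong\rho$, Schur's Lemma forces $\tilde{T}\equiv 0$ for every $T$. Specializing $T$ to the elementary matrix $E_{jr}$ (with a $1$ at position $(j,r)$ and zeros elsewhere) and reading off the $(i,s)$-entry of the identity $\tilde{T}=0$ collapses to
$$\sum_{g\in\Gamma} \rho_{ij}(g)\, \pi_{rs}(g^{-1}) = 0.$$
The unitarity relation $\pi(g^{-1})=\pi(g)^*$, i.e.\ $\pi_{rs}(g^{-1})=\overline{\pi_{sr}(g)}$, rewrites this as $\langle \rho_{ij},\pi_{sr}\rangle_{L^2(\Gamma)}=0$, which gives the first orthogonality after relabeling indices.

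For the diagonal case $\pi=\rho$, Schur's Lemma instead gives $\tilde{T}=\lambda_T\, I$ for some scalar $\lambda_T$ depending only on $T$. Applying $\Tr$ on both sides and using cyclicity, I would compute $|\Gamma|\,\Tr(T) = \Tr(\tilde{T}) = d_\pi\,\lambda_T$, hence $\lambda_T = \tfrac{|\Gamma|}{d_\pi}\Tr(T)$. Choosing again $T=E_{jr}$ (so $\Tr(T)=\delta_{jr}$) and matching the $(i,s)$-entry of $\lambda_T I$ with the direct expansion of $\tilde{T}_{is}$ produces the scalar $\tfrac{|\Gamma|}{d_\pi}\delta_{jr}\delta_{is}$, which after the same unitarity identification becomes the stated inner product identity between matrix entries of $\pi$.

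The only conceptual step is recognizing that $T\mapsto \tilde T$ lands in $I(\pi,\rho)$; once this observation is in place, the content is entirely carried by Schur's Lemma, and the rest is symbolic manipulation: choosing the right test matrices $E_{jr}$, computing one trace, and converting $\pi(g^{-1})$ into complex conjugates via unitarity. No structural input beyond finiteness of $\Gamma$ and unitarity of $\pi,\rho$ is required.
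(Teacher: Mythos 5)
Your proof is correct and is the standard averaging argument; the paper itself does not prove this theorem but cites it from Terras (Chapter 15, Theorem 1), so there is no paper proof to compare against. Two small remarks. First, the averaging trick needs one tiny check you glossed over: to get a \emph{unitary} intertwiner from $\tilde T\neq 0$ in the inequivalent case one normally appeals to the version of Schur's Lemma for irreducibles that says a nonzero intertwiner is an isomorphism; as stated in \Cref{schur} part (2) the conclusion is $\dim I(\pi,\rho)=0$, which already forces $\tilde T=0$ directly, so your use is fine. Second, tracking your indices carefully: with $T=E_{jr}$ you obtain $\tilde T_{is}=\sum_g \pi_{ij}(g)\pi_{rs}(g^{-1})=\langle\pi_{ij},\pi_{sr}\rangle_{L^2(\Gamma)}$ and $\lambda_T I$ has $(i,s)$-entry $\frac{|\Gamma|}{d_\pi}\delta_{jr}\delta_{is}$, which after relabelling gives the standard formula $\langle\pi_{ij},\pi_{rs}\rangle=\frac{|\Gamma|}{d_\pi}\delta_{ir}\delta_{js}$. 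The paper states the right-hand side as $\frac{|\Gamma|}{d_\pi}\delta_{ij}\delta_{rs}$, which is a typo (it would make $\pi_{11}$ and $\pi_{22}$ non-orthogonal for $d_\pi\geq 2$, which is false); your derivation recovers the correct form, and this is the form actually used later in the paper, e.g.\ in the proof of \Cref{suppFil}.
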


\begin{thm}[Clifford's Theorem]\label{cliffordthm}
Suppose that $H$ is a normal subgroup of $\Gamma$ and let $\pi\in\widehat{\Gamma}$. Denote by $\pi|_H$ be the restriction of $\pi$ to $H$ and consider its decomposition in irreducible components

\begin{equation}\label{cliffordecom}
    \pi|_H\cong \pi_1\oplus\ldots\oplus\pi_k\qquad\text{where }\pi_i\in\widehat{H}\text{ for every }1\leq i\leq k.
\end{equation}

Then for each $1\le j\le k$ there exists $g_j\in G$ such that for all $h\in H$ we have $\pi_j(h)=\pi_1(g_j^{-1}hg_j)$. Moreover, the following hold:
\begin{enumerate}
   \item All the irreducible components in \eqref{cliffordecom} have the same degree, i.e, for every $1\leq i\leq k$ there holds that $d_{\pi_1}=d_{\pi_i}$.
   
   \item All the irreducible components in \eqref{cliffordecom} have the same multiplicity, i.e, for every $1\leq i\leq k$ there holds that $m(\pi_1,\pi|_H)=m(\pi_i,\pi|_H)$.
\end{enumerate}
\end{thm}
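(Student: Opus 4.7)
The plan is to exploit normality of $H$ in $\Gamma$: for any $g\in\Gamma$ and any $H$-subrepresentation $W\subset V_\pi$, the subspace $\pi(g)W$ is again $H$-invariant because $\pi(h)\pi(g)W=\pi(g)\pi(g^{-1}hg)W\subset\pi(g)W$, since $g^{-1}hg\in H$ by normality. This gives $\Gamma$ a natural action on the set of irreducible $H$-subrepresentations appearing in $\pi|_H$, and the irreducibility of $\pi$ will force this action to be transitive, from which all three conclusions follow.

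First, I introduce the conjugation operation: for $g\in\Gamma$ and $\sigma\in\widehat H$, define $\sigma^g\in\widehat H$ by $\sigma^g(h):=\sigma(g^{-1}hg)$; this is a well-defined representation of $H$ since $h\mapsto g^{-1}hg$ is a group automorphism of $H$, and it is irreducible because the image is the same as that of $\sigma$. For each $\tau\in\widehat H$, let $V(\tau)\subset V_\pi$ denote the $\tau$-isotypic component, i.e.\ the sum of all $H$-invariant subspaces of $V_\pi$ on which $H$ acts by a representation equivalent to $\tau$. The key claim is
\begin{equation*}
    \pi(g)\,V(\tau)=V(\tau^g)\qquad\text{for every }g\in\Gamma,\ \tau\in\widehat H.
\end{equation*}
To verify this, if $W\subset V(\tau)$ is a copy of $\tau$ with intertwiner $T\colon V_\tau\to W$ (so $\pi(h)T=T\tau(h)$), then $\pi(g)T\colon V_\tau\to\pi(g)W$ satisfies $\pi(h)\pi(g)T=\pi(g)\pi(g^{-1}hg)T=\pi(g)T\,\tau(g^{-1}hg)=\pi(g)T\,\tau^g(h)$, showing $\pi(g)W$ is a copy of $\tau^g$; the reverse inclusion is obtained by the same argument applied to $g^{-1}$.

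Now consider the $\Gamma$-orbit $\mathcal O\subset\widehat H$ of $\pi_1$ under conjugation. By the key claim, the subspace
\begin{equation*}
    U:=\bigoplus_{\tau\in\mathcal O}V(\tau)
\end{equation*}
is stable under $\pi(g)$ for every $g\in\Gamma$, and it is nonzero since it contains $V(\pi_1)$. Irreducibility of $\pi$ forces $U=V_\pi$, hence every irreducible $H$-subrepresentation occurring in $\pi|_H$ is of the form $\pi_1^{g_j}$ for some $g_j\in\Gamma$; this is exactly the conjugacy conclusion, and item (a) on equal degrees is immediate since $\pi_j=\pi_1^{g_j}$ has the same matrix entries (up to reindexing the group argument) as $\pi_1$.

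Finally, for item (b) on equal multiplicities, I use that $\pi(g_j)$ is a linear isomorphism of $V_\pi$ with $\pi(g_j)V(\pi_1)=V(\pi_1^{g_j})=V(\pi_j)$. Hence $\dim V(\pi_1)=\dim V(\pi_j)$; dividing by the common degree $d_{\pi_1}=d_{\pi_j}$ yields $m(\pi_1,\pi|_H)=m(\pi_j,\pi|_H)$. The principal point to be careful with is the bookkeeping in the key claim $\pi(g)V(\tau)=V(\tau^g)$: one must check that the intertwiner produced genuinely identifies $\pi(g)W$ with $\tau^g$ rather than with $\tau^{g^{-1}}$, and that isotypic components are preserved (not merely irreducible copies), which is where the symmetric argument with $g^{-1}$ is needed. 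Aside from this, the proof is a direct consequence of Schur's lemma (\Cref{schur}) and the normality hypothesis.
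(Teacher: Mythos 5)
Your proof is correct and is the standard classical argument for Clifford's theorem: normality of $H$ makes $\Gamma$ act on $\widehat{H}$ by conjugation; the identity $\pi(g)V(\tau)=V(\tau^g)$ between isotypic components shows the sum over a $\Gamma$-orbit is $\pi$-invariant, hence equal to all of $V_\pi$ by irreducibility; equal degrees are then immediate since $\tau^g$ acts on the same space as $\tau$, and equal multiplicities follow because $\pi(g_j)$ is a linear isomorphism carrying $V(\pi_1)$ onto $V(\pi_j)$, so $\dim V(\pi_1)=\dim V(\pi_j)$ and one divides by the common degree. The bookkeeping you flag (that the intertwiner sends $\pi(g)W$ to a copy of $\tau^g$ and not $\tau^{g^{-1}}$, and that the reverse inclusion needs the argument for $g^{-1}$) is handled correctly.

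For context: the paper states Clifford's theorem in its background section without proof, citing Clifford's original 1937 paper, so there is no in-paper argument to compare against; yours matches the standard textbook proof. One cosmetic remark: as phrased, the theorem asserts the literal equality $\pi_j(h)=\pi_1(g_j^{-1}hg_j)$, whereas your argument (correctly) establishes the equivalence $\pi_j\cong\pi_1^{g_j}$. This is harmless since the $\pi_i$ in a decomposition into irreducible components are only determined up to equivalence, so one may simply take the chosen representatives to be the literal conjugates of $\pi_1$.
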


\subsection{Fourier Transform on Finite groups}\label{ssec: DFT}

Define the Fourier Transform of a function $f\in L^1(\Gamma)$ by 
\begin{equation}\label{FTrep}
    \mathcal{F}(f)(\pi):=\frac{1}{|\Gamma|}\sum_{g\in \Gamma}f(g)\pi(g),\qquad\text{ for every }\pi\in\widehat{\Gamma}.
\end{equation}
\medskip

For $\phi,\psi\in L^2(\Gamma)$ define the convolution $\phi*\psi$ as follows
\[
    \phi*\psi(g):=\sum_{h\in \Gamma}\phi(h)\psi(h^{-1}g).
\]
Then with the above definitions, we get that
\begin{equation}\label{nonabelconv}
    \mathcal{F}(\phi*\psi)(\pi)=|\Gamma|\mathcal{F}(\phi)(\pi)\mathcal{F}(\psi)(\pi).
\end{equation}
Denote by $L^2(\widehat{\Gamma})$ be the vector space

\[
L^2(\widehat{\Gamma}):=\left\{f:\widehat{\Gamma}\to\bigsqcup_{\pi\in\widehat{\Gamma}}\mathbb{C}^{d_{\pi}\times d_{\pi}}:\ f(\pi)\in\mathbb{C}^{d_{\pi}\times d_{\pi}}\text{ for every }\pi\in\widehat{\Gamma}\right\}. 
\]

We endow $L^2(\widehat{\Gamma})$ with the inner product $\langle\cdot,\cdot\rangle_{L^2(\widehat{\Gamma})}$ given by
\begin{equation}\label{innerprodual}
    \langle f,g\rangle_{L^2(\widehat{\Gamma})}:=\frac{1}{|\Gamma|}\sum_{\pi\in\widehat{\Gamma}}d_{\pi}\mathsf{Tr}(f(\pi)g(\pi)^*).
\end{equation}
Here $\mathsf{Tr}(T)$ is the {\em trace} of a linear operator $T:V\to V$, and is defined by $\mathsf{Tr}(T):=\sum_{j=1}^{d_{\pi}}\langle Te_j,e_j\rangle$, where $(e_i)$ ranges over any orthonormal basis of $V$. In particular, for all $f\in L^2(\widehat{\Gamma})$ there holds that
\begin{equation}\label{L2hatgamma}
    \|f\|_{L^2({\widehat{\Gamma}})}^2=\frac{1}{|\Gamma|}\sum_{\pi\in\widehat{\Gamma}}d_{\pi}\sum_{i,j=1}^{d_{\pi}}|f_{ij}(\pi)|^2,
\end{equation}
where $f_{ij}(\pi)$, $i,j=1,\ldots,d_{\pi}$ are the entries of the matrix $f(\pi)$ in a basis $(e_i^{(\pi)})_{i=1}^{d_{\pi}}$ of $V_{\pi}$.
\medskip

By the {\em Plancherel Theorem}, we have that the Fourier Transform is an isometric isomorphism between $L^2(\Gamma)$ and $L^2(\widehat{\Gamma})$, i.e., that for any $f,g\in L^2(\Gamma)$ there holds
\begin{equation}\label{plancherel}
    |\Gamma|^2\langle\mathcal{F}(f),\mathcal{F}(g)\rangle_{L^2(\widehat{\Gamma})}=\langle f,g\rangle_{L^2(\Gamma)}.
\end{equation}
By the {\em Peter-Weyl Theorem}, the matrix entries $\pi_{ij}\in L^2(\Gamma)$ of $\pi\in\widehat{\Gamma}$ form a complete orthogonal set in $L^2(\Gamma)$. In particular, \eqref{plancherel} can be written as
\begin{equation}\label{plancherel2}
    \langle f,g\rangle_{L^2(\Gamma)}=\frac{1}{|\Gamma|}\sum_{\pi\in\widehat{\Gamma}}d_{\pi}\sum_{i,j=1}^{d_{\pi}}\langle f,\pi_{ij}\rangle_{L^2(\Gamma)}\overline{\langle g,\pi_{ij}\rangle}_{L^2(\Gamma)}.
\end{equation}
Finally, the {\em Fourier inversion formula} says that every $f\in L^2(\Gamma)$ can be reconstructed from its Fourier transform as follows
\begin{equation}\label{Finv}
    f(h)=\frac{1}{|\Gamma|}\sum_{\pi\in\widehat{\Gamma}}d_{\pi}\mathsf{Tr}(\pi(h^{-1})\mathcal{F}(\pi))=\frac{1}{|\Gamma|}\sum_{\pi\in\widehat{\Gamma}}d_{\pi}\sum_{i,j=1}^{d_\pi}\langle f,\pi_{ij}\rangle\pi_{ij}(h).
\end{equation}
\subsection{Estimating the Fourier Transform under random perturbations}\label{ssection: nonabelFT}

Let $\Gamma$ be a non-simple finite group and $H$ a normal subgroup of $\Gamma$. Recall that the Fourier Transform of a function $f\in L^2(\Gamma)$ by
\begin{equation}
    \mathcal{F}(f)(\pi)=\widehat{f}(\pi):=\frac{1}{|\Gamma|}\sum_{g\in\Gamma}f(g)\pi(g),\qquad\text{for }\pi\in\widehat{\Gamma}. 
\end{equation}
Consider the Dirac delta of $H$ defined by 
\begin{equation*}
    \delta_{H}(g):=\left\{ \begin{array}{lc}
             1 & \text{if }g\in H \\
             \\ 0 &  \text{otherwise.}
             \end{array}
   \right.
\end{equation*}
As in the Euclidean case, our first goal is to relate the Fourier Transform of $\delta_{H}\in L^2(\Gamma)$ with the annihilator set $H^{\perp}$ defined by
\begin{equation}
    H^{\perp}:=\{\pi\in\widehat{\Gamma}:\ \pi(h)=\mathsf{ld}_{V_{\pi}}\text{ for every }h\in H\}
\end{equation}
We also have the following characterization, which must be a classical result, but for which we were not able to find an explicit reference:
\begin{lemma}\label{Hperpq}
$H^{\perp}\cong\widehat{\Gamma/H}$. 
\end{lemma}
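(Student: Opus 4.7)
The plan is to construct an explicit bijection between $H^\perp$ and $\widehat{\Gamma/H}$ by lifting/descending representations along the quotient map $q:\Gamma\to\Gamma/H$, and then verify that this bijection preserves the equivalence relations defining the two sides.

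First, I would define the candidate map in one direction. Given $\pi\in H^\perp$, the assumption that $\pi(h)=\mathsf{Id}_{V_\pi}$ for all $h\in H$ together with the normality of $H$ in $\Gamma$ makes the formula $\tilde\pi(gH):=\pi(g)$ well-defined on cosets: if $g_1H=g_2H$ then $g_1^{-1}g_2\in H$, so $\pi(g_1^{-1}g_2)=\mathsf{Id}$ and hence $\pi(g_1)=\pi(g_2)$. One checks directly that $\tilde\pi:\Gamma/H\to\mathcal U(V_\pi)$ is a unitary representation of the quotient. In the opposite direction, given any $\sigma\in\widehat{\Gamma/H}$ I would set $\Psi(\sigma):=\sigma\circ q:\Gamma\to\mathcal U(V_\sigma)$, which is a unitary representation of $\Gamma$ that is trivial on $\ker q=H$ and therefore belongs to $H^\perp$.

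Next, I would verify that these two constructions preserve irreducibility and equivalence. Since $q$ is surjective, a subspace $W\leq V_\pi$ is $\pi$-invariant if and only if it is $\tilde\pi$-invariant, so $\pi$ is irreducible iff $\tilde\pi$ is, and likewise for the other direction. For equivalence, I would observe that the intertwining spaces match: a linear map $T:V_{\pi_1}\to V_{\pi_2}$ satisfies $T\pi_1(g)=\pi_2(g)T$ for all $g\in\Gamma$ exactly when it satisfies $T\tilde\pi_1(gH)=\tilde\pi_2(gH)T$ for all $gH\in\Gamma/H$ (this uses only that $q$ is surjective). Hence equivalence classes correspond bijectively. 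By construction, the two maps are mutually inverse at the level of representations, hence descend to a bijection
\[
    H^\perp \;\longleftrightarrow\; \widehat{\Gamma/H}, \qquad [\pi]\longmapsto [\tilde\pi], \qquad [\sigma]\longmapsto[\sigma\circ q],
\]
which gives the desired identification.

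I do not expect any serious obstacle here; this is essentially the content of the universal property of the quotient applied to unitary representations, and the only genuine ingredient beyond bookkeeping is that $H$ is normal (so $\Gamma/H$ is a group and the lifted $\tilde\pi$ is multiplicative). The notational point worth flagging is that $\cong$ here should be interpreted as a bijection of sets of equivalence classes, not as an isomorphism of groups (neither side carries a natural group structure when the representations have different dimensions), consistently with the convention for $\cong$ introduced earlier in the paper.
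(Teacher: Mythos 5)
Your proposal is correct and follows essentially the same route as the paper's sketch: both define the descended representation $\tilde\pi(gH):=\pi(g)$ and rely on normality of $H$ to make this well-defined; you simply fill in the details (the inverse map $\sigma\mapsto\sigma\circ q$, preservation of invariant subspaces and of intertwiners) that the paper explicitly leaves to the reader. One small point in your favor: you correctly note that $\cong$ here should be read as a bijection of sets of equivalence classes rather than a group isomorphism, whereas the paper's phrasing "group isomorphism" is slightly loose, since neither $H^\perp$ nor $\widehat{\Gamma/H}$ carries a natural group structure when $\Gamma$ is nonabelian.
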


\begin{proof}[Sketch of the proof. ]Let $i:\Gamma\to \Gamma/H$ be the canonical map associating to every $g\in\Gamma$ its equivalence class $i(g)\in\Gamma/H$. The natural map $\phi:H^{\perp}\to\widehat{\Gamma/H}$ given by 
\[
\phi(\pi)(i(g)):=\pi(g)
\]
is well-defined and determines a group isomorphism between $H^{\perp}$ and $\widehat{\Gamma/H}$. We leave the details to the reader.
\end{proof}

For the next discussion, we will study the unitary representations $\pi\not\in H^\perp$. Given a representation $\pi\in\widehat{\Gamma}\setminus H^{\perp}$, by \Cref{irredecomp} we can decompose $\pi|_H$ (modulo equivalence) in irreducible components

\[
\pi|_H=\pi_1\oplus\ldots\oplus\pi_k,\qquad \text{ where }\pi_i\in\widehat{H}\text{ for every }i=1,\ldots,k.
\]

In this setting, consider the matrix $\widehat{\delta_H}(\pi)_i$ as the the $d_i\times d_i$-block of $\widehat{\delta_H}(\pi)$ defined by

\begin{equation}\label{deltaproj}
 \left(\widehat{\delta_H}(\pi)\right)_i:=P_i^T\widehat{\delta_H}(\pi)P_i=\frac{1}{|\Gamma|}\sum_{h\in H}\pi_i(h),
\end{equation}

where $P_i$ is the projection onto the subspace of $V_i$ where $\pi_i$ acts, and $d_i$ is the degree of $\pi_i$. Note that in the above definition, we use an abuse of notation: we have $\widehat{\delta_H}\in L^2(\widehat{\Gamma})$ and in general \Cref{irredecomp} cannot be applied to $\widehat{\delta_H}(\pi)$ directly.

\begin{lemma}\label{FTdeltaH}
We have that

\begin{enumerate}
    \item If $\pi\in H^{\perp}$, then $\widehat{\delta_{H}}(\pi)=\frac{|H|}{|\Gamma|}\mathsf{Id}_{V_{\pi}}$. 
    \item If $\pi\in \widehat{\Gamma}\setminus H^{\perp}$, then when we decompose $\widehat{\delta_H}(\pi)$ as in \eqref{deltaproj}, for all $i=1,\ldots,k$ for which $\pi_i\neq\mathsf{Id}_{
    V_{\pi_i}}$, the block $\left(\widehat{\delta_H}(\pi)\right)_i$ vanishes.
\end{enumerate}

In particular, if $H$ is a normal subgroup of $\Gamma$, then

\begin{equation*}
    \widehat{\delta_{H}}(\pi)=\frac{|H|}{|\Gamma|}\mathsf{Id}_{d_{\pi}}\delta_{H^\perp}(\pi)=\left\{ \begin{array}{lc}
             \frac{|H|}{|\Gamma|}\mathsf{Id}_{d_{\pi}} & \text{if }\pi\in H^{\perp} \\
             \\ 0 &  \text{otherwise.}
             \end{array}
   \right.
\end{equation*}

\end{lemma}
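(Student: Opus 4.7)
\textbf{Proof plan for Lemma~\ref{FTdeltaH}.}

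The plan is to treat the two cases separately, each by a short direct computation, and then combine them via Clifford's theorem for the ``in particular'' statement.

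For part (1), if $\pi \in H^{\perp}$ then by definition $\pi(h) = \mathsf{Id}_{V_\pi}$ for every $h \in H$, so
\[
\widehat{\delta_H}(\pi) = \frac{1}{|\Gamma|}\sum_{h \in H} \pi(h) = \frac{|H|}{|\Gamma|}\mathsf{Id}_{V_\pi}.
\]

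For part (2), the key observation is a standard averaging fact: for any irreducible unitary representation $\rho$ of the finite group $H$, one has $\sum_{h \in H}\rho(h) = 0$ unless $\rho$ is the trivial representation. To see this, set $T := \sum_{h \in H}\rho(h)$ and note that for any $h_0 \in H$,
\[
\rho(h_0)T = \sum_{h \in H}\rho(h_0 h) = T,
\]
by reindexing. Hence $T$ commutes with $\rho(H)$, and by \Cref{schur} part (1), $T = \lambda\, \mathsf{Id}$ for some scalar $\lambda$. Then $\rho(h_0)(\lambda\,\mathsf{Id}) = \lambda\,\mathsf{Id}$ for all $h_0$, which forces either $\lambda = 0$ or $\rho \equiv \mathsf{Id}$. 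Applying this fact to each irreducible component $\pi_i$ of $\pi|_H$ and using the block-diagonal form afforded by \Cref{irredecomp}, we obtain
\[
\bigl(\widehat{\delta_H}(\pi)\bigr)_i = \frac{1}{|\Gamma|}\sum_{h \in H}\pi_i(h) = 0
\]
for every $i$ with $\pi_i \neq \mathsf{Id}_{V_{\pi_i}}$.

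Finally, for the ``in particular'' statement when $H \triangleleft \Gamma$, I would invoke \Cref{cliffordthm}: for each $i$ there exists $g_i \in \Gamma$ with $\pi_i(h) = \pi_1(g_i^{-1}h g_i)$ for all $h \in H$. Since $H$ is normal, the map $h \mapsto g_i^{-1}h g_i$ is a bijection of $H$, and so $\pi_i$ is trivial if and only if $\pi_1$ is trivial. Consequently either all components $\pi_i$ are trivial, equivalent to $\pi \in H^\perp$, or none are, in which case every block $(\widehat{\delta_H}(\pi))_i$ vanishes by part (2) and hence $\widehat{\delta_H}(\pi) = 0$. Combined with part (1), this gives the dichotomy in the statement. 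The only slightly delicate step is the last one, checking that for normal $H$ conjugation permutes $H$ so that triviality of $\pi_1$ propagates to all $\pi_i$; everything else is an immediate application of Schur's lemma and the definitions.
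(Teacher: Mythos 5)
Your proof is correct, and it departs from the paper in one genuine way. Parts (1) and (2) match the paper's idea: both you and the paper observe that $\widehat{\delta_H}(\pi)$ restricted to an $H$-irreducible block is an intertwiner for $\pi_i$, invoke Schur's lemma to make it scalar, and then note that $\pi_i(h)T = T$ forces the scalar to vanish when $\pi_i$ is nontrivial; you state this as a clean standalone averaging fact for a single irreducible, which is slightly tidier than the paper's phrasing but is the same argument.

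The ``in particular'' step is where you genuinely diverge. The paper does \emph{not} invoke Clifford's theorem here; instead it forms the $\Gamma$-average $P_\pi := \sum_{g\in\Gamma}\pi(g)\widehat{\delta_H}(\pi)\pi(g^{-1})$, observes that $P_\pi\in I(\pi)$ so $P_\pi=\lambda\,\mathsf{Id}$ by Schur on the $\Gamma$-irreducible $\pi$, and then uses normality of $H$ to compute $P_\pi=|\Gamma|\widehat{\delta_H}(\pi)$, concluding that $\widehat{\delta_H}(\pi)$ is itself scalar and pinning down $\lambda\in\{|H|,0\}$ from parts (1)--(2). You instead use \Cref{cliffordthm} to argue that normality makes all $H$-components $\pi_i$ of $\pi|_H$ mutual $\Gamma$-conjugates, hence simultaneously trivial or simultaneously nontrivial, so the dichotomy follows from parts (1) and (2) without ever needing to show directly that $\widehat{\delta_H}(\pi)$ is scalar. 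Both routes consume normality, but the paper's is a direct averaging-over-$\Gamma$ argument (``$\delta_H$ is a class function'' in disguise, as the paper's own remark notes), while yours delegates the work to Clifford's theorem, which is stated but otherwise only used later in the paper. Your route is arguably more conceptual; the paper's is more self-contained at this point in the exposition and also gives the slightly stronger intermediate fact that $\widehat{\delta_H}(\pi)$ is scalar in any basis.
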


\begin{proof}
Firstly, by the definition of the Fourier Transform, we get that

\begin{equation}
    \widehat{\delta_H}(\pi)=\frac{1}{|\Gamma|}\sum_{h\in H}\pi(h).
\end{equation}

If $\pi\in H^{\perp}$ then it is direct that $\widehat{\delta_H}(\pi)=\frac{|H|}{|\Gamma|}\mathsf{ld}_{V_{\pi}}$. 
Thus we concentrate on the case $\pi\not\in H^{\perp}$. In this case, there must exists $h'\in H$ such that $\pi(h')\neq \mathsf{Id}_{V_{\pi}}$. For this choice of $h'$, there holds 

\begin{equation}\label{commutdelta}
    \begin{split}
        \pi(h')\widehat{\delta_H}(\pi)=\frac{1}{|\Gamma|}\sum_{h\in H}\pi(h'h)=\widehat{\delta_H}(\pi)=\widehat{\delta_H}(\pi)\pi(h').
    \end{split}
\end{equation}

In particular for each block in the decomposition of $\pi|_H$ from \Cref{irredecomp}, we have that the operator $\widehat{\delta_H}(\pi)_i$ belongs to $I(\pi_i)$ (if $\pi(h')=\mathsf{Id}_{V_{\pi}}$) then $\widehat{\delta_H}(\pi)_i$ obviously commutes with $\pi_i(h')$). By Schur's Lemma, there is a scalar $c_i$ such that $\widehat{\delta_H}(\pi)_i=c_i\cdot\mathsf{Id}_{V_{\pi_i}}$. If $1\leq i\leq k$ satisfies that $\pi_i(h')\neq\mathsf{Id}_{V_{\pi_i}}$, then from \eqref{commutdelta} we get that $c_i=0$, otherwise $\pi_i(h')=\mathsf{Id}_{V_{\pi_i}}$ contradicting the choice of $\pi$ and $h'\in H$. Therefore the matrix $\widehat{\delta_H}(\pi)_i$ vanishes as claimed.
\medskip

Now, assume that $H$ is a normal subgroup of $\Gamma$. In this case, consider the matrix

\[
P_{\pi}:=\sum_{g\in\Gamma}\pi(g)\widehat{\delta_H}(\pi)\pi(g^{-1}).
\]

There holds $P_{\pi}\in I(\pi)$, since for every $g'\in\Gamma$ we can write

\begin{equation*}
    \begin{split}
        \pi(g')P_{\pi}&=\sum_{g\in\Gamma}\pi(g'g)\widehat{\delta_H}(\pi)\pi(g^{-1})=\sum_{p\in\Gamma}\pi(p)\widehat{\delta_H}(\pi)\pi(p^{-1}g')=P_{\pi}\pi(g').
    \end{split}
\end{equation*}

Hence, by Schur's Lemma, there is a constant $\lambda$ such that

\begin{equation}\label{Pconstant}
P=P_{\pi}=\lambda\cdot\mathsf{Id}_{d_{\pi}}.
\end{equation}

On the other hand, since $H$ is a normal subgroup of $\Gamma$, we have

\begin{equation}\label{Pdelta}
P_{\pi}=\frac{1}{|\Gamma|}\sum_{g\in\Gamma}\pi(g)\left(\sum_{h\in H}\pi(h)\right)\pi(g^{-1})=\frac{1}{|\Gamma|}\sum_{g\in\Gamma}\sum_{h\in H}\pi(ghg^{-1})=\sum_{g\in\Gamma}\widehat{\delta_{gHg^{-1}}}(\pi)=|\Gamma|\widehat{\delta_H}(\pi).
\end{equation}

Thus, from \eqref{Pconstant} and \eqref{Pdelta} we get that

\begin{equation}\label{deltaid}
    \widehat{\delta_H}(\pi)=\frac{\lambda}{|\Gamma|}\mathsf{Id}_{d_{\pi}}.
\end{equation}
For $\pi\in H^\perp$, by applying the above to the the identity of $\Gamma$, we obtain that $\lambda=|H|$. For $\pi\in \widehat\Gamma\setminus H^{\perp}$, we have that the values $c_i$ from the paragraph following \eqref{deltaid} are all equal to $c_i=\frac{\lambda}{|\Gamma|}$, and therefore $\lambda=0$. This finishes the proof.
\end{proof}

\begin{remark}
The last formula of \Cref{FTdeltaH} is a consequence of a more general result, which says that $f\in L^2(\Gamma)$ is a class function (i.e., if $f(ghg^{-1})=f(h)$ for every $h,g\in\Gamma$) if and only if $\widehat{f}(\pi)$ is a scalar multiple of $\mathsf{Id}_{d_{\pi}}$. See for instance Tao's Lecture notes of Representation Theory \cite{Tao}. 
\end{remark}

Now, to describe random perturbations of a subgroup $H$ of a group $\Gamma$ as above, let $(\xi_h)_{h\in H}$ be independent identically distributed random vectors defined in a probability space $(\Omega,\mathcal{T},\mathbb{P})$, where for each $h\in H$ the random variable $\xi_h$ is $\Gamma$-valued. Like in the euclidean case, consider the realization set $H_{\xi}$ as the subset of $\Gamma$ defined by

\begin{equation}\label{randomH}
    H_{\xi}:=\{h\xi_h:\ h\in H\}. 
\end{equation}
Equivalently, if $\mu_e$ denotes the law of $\xi_{\mathsf{id}}$, then the law of $h\xi_h$ is the image measure of $\mu_{e}$ under the $H$-left translations $L_h(h'):=hh'$.

\medskip

The main result of this section is an approximate recovery of $\widehat{\delta_H}$ from $\widehat{\delta_{H_{\xi}}}$ as follows.

\begin{thm}\label{basicthmH}
Let $L>3$ and let $H$ be a normal subgroup of $\Gamma$ with cardinality $|H|$ at least $6L/(L-3)$. Consider $\xi_h, h\in H$ be i.i.d. $\Gamma$-valued random variables with $\xi_h\sim\xi$ and $C_{\xi,\Gamma}:=\sup_{\pi\in\widehat{\Gamma}}\mathsf{Var}[\pi(\xi)]<\infty$, and assume that $\varepsilon>\left(\frac{12C^2L}{|H|}\right)^{1/2}$. Then with probability at least $1-\frac{12C^2L}{\varepsilon^2|H|}$ we have
\begin{equation}\label{mainerrorH}
    \left\|\widehat{\delta_{H_{\xi}}}-\widehat{\delta_H}\cdot\mathbb E[\xi(\cdot)]\right\|_{L^2(\widehat{\Gamma})}^2\leq \frac{|H|^4}{|\Gamma|^2}\left(\varepsilon+\frac{4C^2}{|H|}\right)\max_{\pi\in\widehat\Gamma}d_\pi,
\end{equation}
where $\xi(\cdot):\widehat{\Gamma}\to \bigsqcup_{\pi\in\widehat{\Gamma}}\mathbb{C}^{d_{\pi}\times d_{\pi}}$ stands for the map $\xi(\pi):=\pi(\xi)$.
\end{thm}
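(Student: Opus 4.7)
The plan is to apply the second-moment method to the centered matrix-valued random quantity
\[
Y(\pi) \;:=\; \widehat{\delta_{H_\xi}}(\pi) - \widehat{\delta_H}(\pi)\,\mathbb{E}[\pi(\xi)],\qquad \pi\in\widehat\Gamma,
\]
and then a Chebyshev-type tail bound on $\|Y\|_{L^2(\widehat\Gamma)}^2$. Since each $\pi\in\widehat\Gamma$ is a homomorphism, $\pi(h\xi_h)=\pi(h)\pi(\xi_h)$, and a direct expansion gives
\begin{equation*}
Y(\pi) \;=\; \frac{1}{|\Gamma|}\sum_{h\in H}\pi(h)\,Z_h(\pi),\qquad Z_h(\pi):=\pi(\xi_h)-\mathbb E[\pi(\xi)],
\end{equation*}
with the $Z_h$ independent across $h\in H$, centered, and Frobenius-variance-bounded by $\mathsf{Var}[\pi(\xi)]\le C$; in particular $\mathbb E[Y(\pi)]=0$.

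First, expanding $Y(\pi)Y(\pi)^*$ as a double sum over $(h,h')\in H\times H$ and using independence, centering, the unitarity of $\pi(h)$ and the cyclic invariance of the trace, I obtain
\begin{equation*}
\mathbb{E}[\Tr(Y(\pi)Y(\pi)^*)] \;=\; \frac{|H|}{|\Gamma|^2}\,\mathsf{Var}[\pi(\xi)]\;\le\;\frac{|H|C}{|\Gamma|^2}.
\end{equation*}
Summing against the Plancherel weights $d_\pi/|\Gamma|$ bounds $\mathbb{E}\bigl[\|Y\|^2_{L^2(\widehat\Gamma)}\bigr]$ uniformly; after rescaling by the prefactor $\frac{|H|^4\max_\pi d_\pi}{|\Gamma|^2}$ appearing in \eqref{mainerrorH}, this contribution fits inside the additive correction $\frac{4C^2}{|H|}$ in the claimed bound.

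Second, to produce the deviation factor $\varepsilon$, I would control the variance of the scalar $\|Y\|^2_{L^2(\widehat\Gamma)}$ and apply Chebyshev. Expanding $\|Y\|^4$ as a quadruple sum over $(h_1,h_2,h_3,h_4)\in H^4$ and a double sum over $\widehat\Gamma\times\widehat\Gamma$, independence of the $\xi_h$ kills all but the terms where the four $H$-indices pair up, Schur orthogonality (Theorem \ref{Schurortho}) and the Peter--Weyl identity collapse the representation sum, and Lemma \ref{FTdeltaH} combined with Clifford's theorem (Theorem \ref{cliffordthm}) produce further cancellations in the blocks outside $H^\perp$ that rely crucially on the normality of $H$. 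A careful count should yield a variance estimate
\[
\mathsf{Var}\bigl[\|Y\|^2_{L^2(\widehat\Gamma)}\bigr]\;\lesssim\;\frac{12C^2L}{|H|}\Bigl(\tfrac{|H|^4\max_\pi d_\pi}{|\Gamma|^2}\Bigr)^2,
\]
from which Chebyshev's inequality produces exactly the stated probability $1-\frac{12C^2L}{\varepsilon^2|H|}$.

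The main obstacle will be this fourth-moment bookkeeping: tracking the full combinatorics of four-fold index pairings in $H^4$ alongside the representation-theoretic collapses via Schur, Peter--Weyl, and Clifford, while keeping the constants tight enough to recognize the form $\varepsilon+4C^2/|H|$ cleanly. The lower bound $|H|\ge 6L/(L-3)$ should arise from imposing that the mean contribution $\frac{4C^2}{|H|}$ does not swamp the deviation threshold $\varepsilon>\sqrt{12C^2L/|H|}$, so that their sum is controlled by either term individually.
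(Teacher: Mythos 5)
Your plan takes a genuinely different route from the paper. The paper does \emph{not} apply a second-moment method to the scalar $\|Y\|^2_{L^2(\widehat\Gamma)}$. Instead it expands the squared norm into matrix-entry inner products $\langle f_{il}, g_{lj}\rangle_{L^2(H)}$ (see \eqref{estFT}), passes to their zero-extensions $F_{il}, G_{lj}$ on $\Gamma$, and applies Cauchy--Schwarz after Plancherel (equation \eqref{nonabelest}) to split the estimate into a deterministic factor $\|\widehat{F_{il}}\|_{L^2(\widehat\Gamma)}$ (controlled via Schur orthogonality and Clifford's theorem in \Cref{suppFil}) and a random factor $\|\widehat{G_{lj}}\|_{L^2(\widehat\Gamma)}$. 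The random factor is expressed via the \emph{autocorrelation} $\gamma_{lj}(g)=\frac{1}{|H|}\sum_h G_{lj}(h)\overline{G_{lj}(g^{-1}h)}$ (equation \eqref{nonabelautocorr}); since these products are not independent across $h$ for fixed $g$, the paper partitions $H$ into three translation orbits $H_{g,0},H_{g,1},H_{g,2}$ (Lemma \ref{parti3nonabel}) so that Chebyshev applies cleanly within each piece, and then takes a union bound over the three pieces. So the paper's probability is obtained by concentrating the autocorrelation, whereas you propose to concentrate $\|Y\|^2$ directly.

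The genuine gap in your proposal is the fourth-moment step, and the way you predict the constants. You assert that careful bookkeeping ``should yield'' $\mathsf{Var}[\|Y\|^2_{L^2(\widehat\Gamma)}] \lesssim \tfrac{12C^2L}{|H|}\bigl(\tfrac{|H|^4\max_\pi d_\pi}{|\Gamma|^2}\bigr)^2$, but nothing in a direct fourth-moment computation produces a free parameter $L>3$: in the paper, $L$ is purely an artifact of the three-way partition. The factor $12=3\times 4$ arises from a union bound over the three subsets $H_{g,i}$ times a per-piece Chebyshev constant $4C^2$, and the cardinality hypothesis $|H|\ge 6L/(L-3)$ is precisely what makes each piece have size at least $|H|/L$ (so that $|H_{g,i}|\ge |H|/3 - 2 \ge |H|/L$). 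Your alternative explanation for that hypothesis---that it keeps $4C^2/|H|$ from swamping the threshold $\varepsilon>\sqrt{12C^2L/|H|}$---is simply incorrect; that comparison has no relation to $6L/(L-3)$, which depends only on $L$ and not on $C$. A fourth-moment argument could in principle yield a theorem of this shape with \emph{some} absolute constant in place of $12L$, but it would not reproduce the paper's $L$-dependent statement as is, and the combinatorics you defer (index pairings in $H^4$ combined with Schur/Clifford collapses across two representation sums) are substantial and unverified.

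One smaller issue: the claim that the mean $\mathbb E[\|Y\|^2]\approx \tfrac{|H|}{|\Gamma|^3}\sum_\pi d_\pi \mathsf{Var}[\pi(\xi)]$ always fits inside the $\tfrac{4C^2}{|H|}$ correction after rescaling is false in general---it requires $4C|H|^2\max_\pi d_\pi \ge 1$, which can fail for very small $C$. It does always fit inside the $\varepsilon$ term (since $\varepsilon\gtrsim C/\sqrt{|H|}$ makes that term of order $C|H|^{7/2}\max_\pi d_\pi/|\Gamma|^2$), so the issue is repairable, but it means your absorption of the mean into the claimed bound needs to be rearranged rather than asserted as you have it.
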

By the definition of $|\cdot|_{L^2(\widehat{\Gamma})}$ fixed in \eqref{innerprodual}, the left-hand side of \eqref{mainerrorH} can be written as
\begin{equation}\label{1streformG}
\begin{split}
    \left\|\widehat{\delta_{H_{\xi}}}-\widehat{\delta_H}\cdot\mathbb E[\xi(\cdot)]\right\|^2_{L^2(\widehat{\Gamma})}&=\frac{1}{|\Gamma|}\sum_{\pi\in\widehat{\Gamma}}d_{\pi}\sum_{j=1}^{d_{\pi}}\left\|\left(\widehat{\delta_{H_{\xi}}}(\pi)-\widehat{\delta_H}(\pi)\mathbb{E}(\xi(\pi))\right)e_{j}^{(\pi)}\right\|_{V_{\pi}}^2\\
    &=\frac{1}{|\Gamma|^3}\sum_{\pi\in\widehat{\Gamma}}d_{\pi}\sum_{i,j=1}^{d_{\pi}}\left|\sum_{l=1}^{d_{\pi}}\sum_{h\in H}\pi_{il}(h)\left(\pi_{lj}(\xi_h)-\mathbb{E}[\pi_{lj}(\xi)]\right)\right|^2\\
    &=\frac{1}{|\Gamma|^3}\sum_{\pi\in\widehat{\Gamma}}d_{\pi}\sum_{i,j=1}^{d_{\pi}}\left|\sum_{l=1}^{d_{\pi}}\langle\pi_{il},\overline{\pi_{lj}(\xi_h)-\mathbb{E}[\pi_{lj}(\xi)]}\rangle_{L^2(H)}\right|^2,
\end{split}    
\end{equation}
and where $\{e_1^{(\pi)},\dots,e_{d_{\pi}}^{(\pi)}\}$ is an orthonormal basis for $V_{\pi}$. Now for fixed $\pi\in L^2(\widehat{\Gamma})$ we define $f_{il}, g_{lj}\in L^2(H)$ and their extensions by zero $F_{il}, G_{lj}\in L^2(\Gamma)$ by
\[
f_{il}^{(\pi)}(h)=f_{il}(h):=\pi_{il}(h)\qquad\text{and}\qquad g_{lj}^{(\pi)}(h)=g_{lj}(h):=\overline{\pi_{lj}(\xi_h)-\mathbb{E}[\pi_{lj}(\xi)]},
\]
\[
F_{il}^{(\pi)}=F_{il}=f_{il}\delta_H\qquad\text{and}\qquad G_{lj}^{(\pi)}=G_{lj}=g_{lj}\delta_H.
\]
With this notation, our main goal for estimating \eqref{1streformG} is to find a suitable bound for $\sum_{l=1}^{d_{\pi}}\langle f_{il},g_{lj}\rangle_{L^2(H)}$, possibly depending on $i,j$, since \eqref{1streformG} can be rewritten as: 
\begin{equation}\label{estFT}
     \left\|\widehat{\delta_{H_{\xi}}}-\widehat{\delta_H}\cdot\mathbb E[\xi(\cdot)]\right\|^2_{L^2(\widehat{\Gamma})}=\frac{1}{|\Gamma|^3}\sum_{\pi\in\widehat{\Gamma}}d_{\pi}\sum_{i,j=1}^{d_{\pi}}\left|\sum_{l=1}^{d_{\pi}}\langle f_{il},g_{lj}\rangle_{L^2(H)}\right|^2.
\end{equation}
Observe that $f_{il}$ is deterministic and $g_{lj}$ is random.

\medskip

By triangle inequality, Cauchy-Schwarz inequality and the Plancherel formula \eqref{plancherel}, we rewrite the summands in \eqref{estFT} via the Fourier transforms of $F_{il}, G_{lj}$ (compare with \eqref{parseval}):
\begin{equation}\label{nonabelest}
\begin{split}
    \left|\sum_{l=1}^{d_{\pi}}\langle F_{il},G_{lj}\rangle_{L^2(\Gamma)}\right|&=|\Gamma|^2\left|\sum_{l=1}^{d_{\pi}}\left\langle\widehat{F_{il}},\widehat{G_{lj}}\right\rangle_{L^2(\widehat{\Gamma})}\right|\leq|\Gamma|^2\sum_{l=1}^{d_{\pi}}\left\|\widehat{F_{il}}\right\|_{L^2(\widehat{\Gamma})}\left\|\widehat{G_{lj}}\right\|_{L^2(\widehat{\Gamma})}.
    \end{split}    
\end{equation}
We concentrate first on determining $\|\widehat{F_{il}}\|_{L^2(\Gamma)}$. We now fix our orthonormal basis of $V_\pi$ so that it is adapted to the decomposition
\[
\pi|_H=\pi_1\oplus\ldots\oplus\pi_{l(\pi)},\qquad\text{where }\pi_k\in \widehat{H} \text{ for every }k=1,\ldots,l(\pi).
\]
Therefore $\pi_{il}\delta_H\in L^2({H})$ appears as a matrix-entry of some irreducible component, whose index we denote $k(i,l)=k, 1\leq k\leq l(\pi)$; we denote by $\pi_{pq}^{(k)}$ the matrix entry of $\pi_k$ corresponding to $\pi_{il}\delta_H$.
\begin{lemma}\label{suppFil}
As before, let $\pi\in\widehat\Gamma$, let $\pi_{il}$ be a matrix entry of $\pi$ such that $\pi_{il}$ is not constantly zero over $H$, and let $\pi_{k(i,l)}=\pi_k\in\widehat{H}$ be the irreducible component of $\pi|_H$ in which $\pi_{il}$ appears. Then the following holds: 
\begin{enumerate}
    \item The support of $\widehat{F_{il}}$ is the set of all irreducible unitary representations $\rho\in\widehat{\Gamma}$ for which $\pi_k$ occurs in the decomposition in irreducible components of $\rho|_H$.
    \item If $m(\pi_k,\rho|_H)$ is the number of times that $\pi_k$ appears in the decomposition of $\rho|_H$ in irreducible components, then
    \begin{equation}\label{Fil}
        \|\widehat{F_{il}}\|_{L^2(\Gamma)}^2=\frac{|H|^2}{d_{\pi_k}^2|\Gamma|^3}\sum_{\rho\in\widehat\Gamma}d_{\rho}\cdot m(\pi_k,\rho|_H)
    \end{equation}
    where $\chi_{\rho}(h):=\mathsf{Tr}(\rho(h))$ is the character of the representation $\rho:H\to GL(V_{\rho})$.
\end{enumerate}
\end{lemma}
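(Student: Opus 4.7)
The plan is to compute $\|\widehat{F_{il}}\|$ by moving back and forth between the spatial and spectral sides: first use Plancherel \eqref{plancherel} to replace $\|\widehat{F_{il}}\|_{L^2(\widehat\Gamma)}^2$ by $|\Gamma|^{-2}\|F_{il}\|_{L^2(\Gamma)}^2$, then expand the latter in the matrix-entry basis of $L^2(\Gamma)$ via \eqref{plancherel2}, and finally evaluate each inner product $\langle F_{il},\rho_{rs}\rangle_{L^2(\Gamma)}$ using Schur orthogonality (\Cref{Schurortho}) on $H$. The support claim will fall out for free once the individual inner products are identified.

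To set up the computation I would fix, for each $\rho\in\widehat\Gamma$, an orthonormal basis of $V_\rho$ adapted to a decomposition $\rho|_H\cong\rho_1\oplus\cdots\oplus\rho_{l(\rho)}$ into $H$-irreducibles; further, I would arrange (by a unitary change of basis inside each isotypic block, which is permissible because equivalent unitary irreducibles are unitarily conjugate) that every subrepresentation in this decomposition equivalent to $\pi_k$ is literally equal to $\pi_k$. With this choice, each pair $(r,s)\in\{1,\dots,d_\rho\}^2$ whose indices both lie in a common copy of $\pi_k$, say at internal positions $(p',q')$, satisfies $\rho_{rs}|_H=\pi^{(k)}_{p'q'}$; for all other $(r,s)$ the function $\rho_{rs}|_H$ is either identically zero or an entry of some $H$-irreducible not equivalent to $\pi_k$.

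Since $F_{il}=\pi^{(k)}_{p,q}$ extended by zero from $H$ to $\Gamma$, the Schur orthogonality relations give
\[
\langle F_{il},\rho_{rs}\rangle_{L^2(\Gamma)}=\langle \pi^{(k)}_{p,q},\rho_{rs}|_H\rangle_{L^2(H)}=\frac{|H|}{d_{\pi_k}}\,\mathbbm{1}\{(r,s)\text{ is the }(p,q)\text{-entry of some copy of }\pi_k\},
\]
and in particular the inner product vanishes whenever $\pi_k$ does not appear in $\rho|_H$. For fixed $(p,q)$, each of the $m(\pi_k,\rho|_H)$ copies of $\pi_k$ inside $\rho|_H$ contributes exactly one index pair $(r,s)$, so
\[
\sum_{r,s=1}^{d_\rho}\bigl|\langle F_{il},\rho_{rs}\rangle_{L^2(\Gamma)}\bigr|^2=\frac{|H|^2}{d_{\pi_k}^2}\,m(\pi_k,\rho|_H).
\]
Substituting into \eqref{plancherel2} with $f=g=F_{il}$ and then using \eqref{plancherel} to get $\|\widehat{F_{il}}\|_{L^2(\widehat\Gamma)}^2=|\Gamma|^{-2}\|F_{il}\|_{L^2(\Gamma)}^2$ yields \eqref{Fil}. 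For the support claim, the chain of equivalences above shows that $\widehat{F_{il}}(\rho)\neq 0$ iff at least one matrix coefficient $\langle F_{il},\rho_{rs}\rangle_{L^2(\Gamma)}$ is nonzero, iff $\pi_k$ occurs in $\rho|_H$.

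The main obstacle I expect is purely bookkeeping: being careful about the distinction between equivalent and literally equal irreducible components of $\rho|_H$, and verifying that a suitable basis of $V_\rho$ can be chosen making the equivalent copies literally identical to $\pi_k$ so that Schur's relations apply blockwise. Once this normalization is in place, no further delicate estimates are needed and the formula \eqref{Fil} follows mechanically.
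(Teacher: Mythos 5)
Your proof is correct and is essentially the same argument as the paper's: both fix a basis of $V_\rho$ adapted to the decomposition $\rho|_H\cong\rho_1\oplus\cdots\oplus\rho_{l(\rho)}$ with copies of $\pi_k$ normalized to be literally equal, apply Schur orthogonality in $L^2(H)$ to identify the nonzero entries, and sum. The only cosmetic difference is the route to the norm: you go through $L^2(\Gamma)$ via the Plancherel identities \eqref{plancherel}--\eqref{plancherel2}, while the paper computes $\|\widehat{F_{il}}\|_{L^2(\widehat\Gamma)}^2$ directly from \eqref{L2hatgamma} once the blocks of $\widehat{F_{il}}(\rho)$ are known; since the $(r,s)$-entry of $\widehat{F_{il}}(\rho)$ equals $|\Gamma|^{-1}\langle F_{il},\overline{\rho_{rs}}\rangle_{L^2(\Gamma)}$, these are the same computation in different clothing.
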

\begin{proof}[Proof point 1. of \Cref{suppFil}] Let $\rho\in\widehat{\Gamma}$. By \Cref{irredecomp} $\rho|_H$ decomposes as
\[
    \rho|_H=\rho_1\oplus\ldots\oplus\rho_{l(\rho)},\qquad\text{where } \rho_k\in \widehat{H}\text{ for every }1\leq k\leq l(\rho).
\]
With these notations, we will prove that
\[
    \widehat{F_{il}}(\rho)\neq 0\qquad\text{if and only if}\qquad \exists\ 1\leq m\leq l(\rho)\text { such that }\rho_m=\pi_k.
\]
With respect to bases of $V_\pi, V_\rho$ adapted to the decompositions of $\pi|_H, \rho|_H$, we have
\begin{equation*}
    \widehat{F_{il}}(\rho)=\frac{1}{|\Gamma|}\sum_{h\in H}\pi_{il}(h)\rho(h)=\frac{1}{|\Gamma|}\sum_{h\in H}\pi_{pq}^{(k)}(h)\rho(h)=\left(\frac{1}{|\Gamma|}\langle\pi_{pq}^{(k)},\overline{\rho_{rs}}\rangle_{L^2(H)}\right)_{r,s=1}^{d_{\rho}}.
\end{equation*}
If a component $\rho_m$ of $\rho|_H$ is not equivalent to $\pi_k$, then Schur's orthogonality relations (\Cref{Schurortho}) give 
\begin{equation}\label{pirhol2}
\langle\pi_{pq}^{(k)},\overline{\rho_{rs}^{(m)}}\rangle_{L^2(H)}=0\qquad\text{for every entry $\rho_{rs}^{(m)}$ of $\rho_m$}.
\end{equation}
In particular, if this is true for all $m$, we get
\[
    \left(\langle\pi_{pq}^{(k)},\overline{\rho_{rs}}\rangle_{L^2(H)}\right)_{r,s=1}^{d_{\rho}}=0\qquad\text{for every }1\leq r,s\leq d_{\rho}. 
\]
This shows one direction of the claimed implication. Assume now that there is $1\leq m\leq l(\rho)$ such that $\rho_m=\pi_k$. Then again, by \Cref{Schurortho} up to a change of basis of $V_\rho$ we get
\[
    \langle\pi_{pq}^{(k)},\overline{\rho_{rs}^{(m)}}\rangle_{L^2(H)}=\frac{|H|}{d_{\pi_k}}\delta_{pr}\delta_{qs}, 
\]
which in particular implies that $\widehat{F_{il}}(\rho)\neq 0$. Summarizing, under our assumption that $\pi_{il}$ is not constantly zero over $H$, we have
\[
    \mathsf{supp}(\widehat{F_{il}})=\{\rho\in\widehat{\Gamma}:\ \pi_k\text{ appears in the decomposition in irred. components of }\rho|_H\}.
\]
\end{proof}

\begin{remark}
    Consider the abelian case $\Gamma=\mathbb{T}_{M,N}^d=\left(\frac{1}{M}\Z/N\Z\right)^d$, $X=(\Z/N\Z)^d$. By a dimensionality argument, every irreducible representation $e^{-2\pi i\langle\cdot,\lambda\rangle}$ of $\widehat{\mathbb{T}}_{M,N}^d\cong\mathbb{T}_{N,M}^d$ restricted to $X$ is in fact an irreducible representation of $\widehat{X}=X$. Thus, for $F_{\lambda}:=e^{-2\pi i\langle\cdot,\lambda\rangle}\delta_X$, from \Cref{suppFil} part 1 we get that 
    \[
    \mathsf{supp}(\widehat{F_{\lambda}})=\{\alpha\in\mathbb{T}_{N,M}^d:\ N\alpha+N\lambda\equiv 0(mod\ M)\}=\mathbb{T}_M^d-\lambda=X^*-\lambda,
    \]
    which coincides with the support of $\widehat{F_{\lambda}}$ given in \eqref{suppFlamb} below.
\end{remark}

\begin{proof}[Proof of point 2. of \Cref{suppFil}]
We now use point 1. of \Cref{suppFil} to compute $|\widehat{F_{il}}|_{L^2(\Gamma)}$. If $\rho\in\mathsf{supp}(\widehat{F_{il}})$, then by \Cref{Schurortho} we obtain a coordinate expression
\[
    \widehat{F_{il}}(\rho)=A_1\oplus A_2\oplus\ldots\oplus A_{l(\rho)},\qquad\text{where }A_m\in M(d_{\rho_m}\times d_{\rho_m},\mathbb{C})\text{ for every }1\leq m\leq l(\rho), 
\]
and where from \eqref{pirhol2} there holds that $A_m=0$ if $\rho_m\not\cong\pi_k$, whereas for $\rho_m\cong \pi_k$ we have
\[
    A_m=(A_{rs}^{(m)})_{r,s=1}^{d_{\pi_k}}\qquad\text{where }A_{rs}^{(m)}=\frac{|H|}{d_{\pi_k}|\Gamma|}\delta_{pr}\delta_{qs}.
\]
Then, by definition of $|\cdot|_{L^{2}(\Gamma)}$ and the previous characterization of $\widehat{F_{il}}$, we get 
\begin{equation*}
    \begin{split}
        \|\widehat{F_{il}}\|_{L^(\widehat{\Gamma})}^2&=\frac{1}{|\Gamma|}\sum_{\rho\in\mathsf{supp}(\widehat{F_{il}})}d_{\rho}\mathsf{Tr}(\widehat{F_{il}}(\rho)\widehat{F_{il}}(\rho)^*)\\
        &=\frac{1}{|\Gamma|}\sum_{\rho\in\mathsf{supp}(\widehat{F_{il}})}d_{\rho}\frac{|H|^2}{d_{\pi_k}^2|\Gamma|^2}\cdot m(\pi_k,\rho|_H)\\
        &=\frac{|H|^2}{d_{\pi_k}^2|\Gamma|^3}\sum_{\rho\in\widehat\Gamma}d_{\rho}\cdot m(\pi_{k(i,l)},\rho|_H).
    \end{split}
\end{equation*}
This implies the claimed equality \eqref{Fil}, concluding the proof.
\end{proof}
To estimate the value of $|\widehat{G_{lj}}|_{L^2(\Gamma)}$, we set $\widetilde{G_{lj}}(g):=\overline{G_{lj}(g^{-1})}$, and from the formula for the Fourier Transform of a convolution \eqref{nonabelconv}, the definition of $\widetilde{G_{lj}}$ and the property $\rho(g^{-1})=\rho(g)^*$ for every unitary representation $\rho$, we get
\begin{equation}
    \mathcal{F}(G_{lj}*\widetilde{G_{lj}})=|\Gamma|\widehat{G_{lj}}\widehat{G_{lj}}^*,
\end{equation}
and as we have $|A|^2=\mathsf{Tr}(AA^*)$ for the Frobenius norm of a matrix, we find
\begin{equation}\label{hatgsquare}
    \left|\widehat{G_{lj}}(\rho)\right|^2=\frac{|H|}{|\Gamma|}\mathsf{Tr}\left(\widehat{\gamma_{lj}}(\rho)\right), 
\end{equation}
where $\gamma_{lj}$ denotes the \emph{autocorrelation of $G_{lj}$} which is defined by
\begin{equation}\label{nonabelautocorr}
    \gamma_{lj}(g):=\frac{1}{|H|}\sum_{h\in H}g_{lj}(h)\overline{g_{lj}(g^{-1}h)}=\frac{1}{|H|}G_{lj}*\widetilde{G_{lj}}=\frac{1}{|H|}\sum_{h\in H}G_{lj}(h)\overline{G_{lj}(g^{-1}h)},
\end{equation}
where $\mathsf{supp}(\gamma_{lj})=H$. Hence, to estimate \eqref{nonabelest}, in view of \eqref{hatgsquare}, it is sufficient to find with high probability the approximate value of $\gamma_{lj}$.

\medskip

Since the $G_{lj}$ are random, it is natural to use Chebyshev inequality to estimate \eqref{nonabelautocorr}. Observe that the random variables $G_{lj}(h)\overline{G_{lj}(g^{-1}h)}$, with $h\in H$, are not necessarily independent. Hence, for each $g\in H$ the goal is decompose $H$ into three subsets $H_{g,i}$, $i=1,2,3$, of cardinalities close to $|H|/3$, for which the random variables $G_{lj}(h)\overline{G_{lj}(g^{-1}h)}$, with $h$ running over each of the $H_{g,i}$'s, are independent.
\begin{lemma}\label{parti3nonabel} 
For each $g\in H\setminus\{\mathsf{id}\}$ we can partition the subgroup $H$ into three subsets $H_{g,0}, H_{g,1}, H_{g,2}$ of cardinalities satisfying $||H_{g,i}| - |H|/3|\le 2$, so that for any $i=0,1,2$ and every $h,h'\in H_{g,i}$, with $h\neq h'$, there holds that $\{h,g^{-1}h\}\cap\{h',g^{-1}h'\}=\emptyset$.
\end{lemma}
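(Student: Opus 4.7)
The plan is to recast the required partition as a balanced proper $3$-coloring of a Cayley-type graph on $H$. Indeed, for distinct $h,h'\in H$ the condition $\{h,g^{-1}h\}\cap\{h',g^{-1}h'\}=\emptyset$ is equivalent to $h'\notin\{gh,g^{-1}h\}$. Hence, letting $\mathcal{G}_g$ denote the Cayley graph of $H$ with respect to the symmetric generating set $\{g,g^{-1}\}$ (vertices $H$; edges $\{h,gh\}$ for $h\in H$), what we must produce is a proper $3$-coloring of $\mathcal{G}_g$ whose color classes each have cardinality within $2$ of $|H|/3$.

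The connected components of $\mathcal{G}_g$ are exactly the left cosets of the cyclic subgroup $\langle g\rangle\le H$, each of size $m:=|\langle g\rangle|$, with $m\ge 2$ since $g\ne\mathrm{id}$. Each component is therefore a single edge if $m=2$ and an $m$-cycle if $m\ge 3$. On each component I would produce a proper $3$-coloring that is as balanced as possible while retaining a color-permutation degree of freedom: for $m\ge 3$ a standard construction gives a proper $3$-coloring of $C_m$ whose class sizes are either $\lfloor m/3\rfloor$ or $\lceil m/3\rceil$, and one may choose which color plays the role of the "extra" one (when $m\equiv 1\pmod 3$) or the "missing" one (when $m\equiv 2\pmod 3$); for $m=2$, each edge is assigned one of the three bicolorings $\{1,2\},\{1,3\},\{2,3\}$, leaving the third color unused on it.

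I would then fix these per-component choices globally so as to balance the three color totals, distributing the "extra/missing/unused" role across the $n:=|H|/m$ components so that each of the three color labels plays that role in either $\lfloor n/3\rfloor$ or $\lceil n/3\rceil$ components. A short case analysis on $(m\bmod 3,\,n\bmod 3)$ then shows $\bigl||H_{g,i}|-|H|/3\bigr|\le 1$ in every case, comfortably within the required bound of $2$. I expect the main (and essentially only) obstacle to be the elementary bookkeeping in the $m=2$ case, where each edge contributes to only two of the three colors, so the choice of bicoloring per edge has to be synchronized across the $n$ components to prevent an accumulation of imbalance; this is handled by observing that $|H_{g,i}|=n-x_i$ where $x_i$ is the number of edges whose bicoloring omits color $i$, and choosing each $x_i\in\{\lfloor n/3\rfloor,\lceil n/3\rceil\}$.
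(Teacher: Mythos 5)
Your proposal is correct and in fact repairs the paper's own argument, which follows the same overall scheme (decompose $H$ into the $\langle g\rangle$-orbits $\mathcal T_g(h_{i_\alpha})=\langle g\rangle h_{i_\alpha}$ and distribute each orbit's elements mod $3$) but does so with the single rigid formula $H_{g,i}:=\{g^{3k+i}h_{i_\alpha}:0\le k\le\mathsf{ord}(g)/3,\ \alpha\}$ applied identically on every coset. That formula has two problems when $3\nmid\mathsf{ord}(g)$: when $\mathsf{ord}(g)\equiv1\pmod 3$ the exponents $3q$ and $0$ are consecutive mod $\mathsf{ord}(g)$ yet land in the same class, violating the disjointness requirement; and when the per-coset class sizes are unequal (by at most one), using the same coloring on every coset makes the imbalance accumulate to order $m_H/3$ rather than staying $\le 2$ --- in the extreme case $\mathsf{ord}(g)=2$ it even gives $H_{g,2}=\emptyset$. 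Your two additional ideas are exactly what is needed: (i) recasting the per-coset problem as a proper $3$-coloring of a cycle (or an edge when $m=2$), so the wrap-around adjacency is handled correctly; and (ii) rotating which color plays the ``extra/missing/unused'' role across the $n$ components so that the per-coset imbalances cancel rather than add. With that you get the stronger bound $\bigl||H_{g,i}|-|H|/3\bigr|\le 1$. Two trivial slips worth fixing: the components are \emph{right} cosets $\langle g\rangle h$, not left cosets; and you should note explicitly that for $m\ge 3$ the $m$-cycle admits a balanced proper $3$-coloring (class sizes $\lfloor m/3\rfloor$ or $\lceil m/3\rceil$) with the ``extra/missing'' color chosen freely --- this is elementary but is the step the paper fumbled, so it deserves a sentence.
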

\begin{proof}
Let us to start by listing the elements of $H$ by $\{\mathsf{id}=h_1,h_2,\ldots,h_{|H|}\}$ and define $h_{i_1}:=h_1$. For every $g\in H\setminus\{\mathsf{id}\}$, let $\mathcal{T}_g(h_{i_1})$ be the orbit of $h_{i_1}$ under left-translation of $g$, i.e:
\[
    \mathcal{T}_g(h_{i_1}):=\{g^{k}h_{i_1}:\ 0\leq k<\mathsf{ord}(g)\}.
\]
Then, let $h_{i_2}$ be the next element of the list such that $h_{i_2}\not\in\mathcal{T}_g(h_{i_1})$, and define the orbit
\[
    \mathcal{T}_g(h_{i_2}):=\{g^{k}h_{i_2}:\ 0\leq k<\mathsf{ord}(g)\}.
\]
Inductively and as long as $H\setminus\bigcup_{\alpha=1}^{m-1}\mathcal{T}_g(h_{i_{\alpha}})\neq\emptyset$, we consider $h_{i_{m}}\in H$ being the next element of the list which does not belong to some of the previous orbits and define:
\[
    \mathcal{T}_g(h_{i_m}):=\{g^kh_{i_m}:\ 0\leq k<\mathsf{ord}(g)\}.
\]
Let $m_H$ be the largest positive integer for which $H=\bigcup_{\alpha=1}^{m_H}\mathcal{T}_g(h_{i_{\alpha}})$. Since all the orbits $\mathcal{T}_g(h_{i_{\alpha}})$, $1\leq\alpha\leq m_H$ have the same number of points, the conclusions of \Cref{parti3nonabel} hold for the following subsets of $H$:
\[
    H_{g,i}:=\left\{g^{3k+i}h_{i_{\alpha}}:\ 0\leq k\leq\frac{\mathsf{ord}(g)}{3},\ 1\leq\alpha\leq m_H\right\},\qquad\text{where }i\in\{0,1,2\}.
\]
\end{proof}
\begin{prop}\label{nonabelvariance} 
Let $L>3$ and assume that $|H|\geq 6L/(L-3)$ and $\varepsilon>\left(\frac{12C^2L}{|H|}\right)^{1/2}$. Under our hypotheses on $\xi$, with probability at least $1-\frac{12C^2L}{\varepsilon^2|H|}$ there holds
\begin{equation}\label{nonabelprop4.4}
    \left|\gamma_{lj}(g)-\mathbb{E}\left[|G_{lj}(\mathsf{id})\overline{G_{lj}(g^{-1})}|^2\right]\delta_{\mathsf{id}}(g)\right|\leq \varepsilon.
\end{equation}
\end{prop}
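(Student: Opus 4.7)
The plan is to establish concentration of the complex-valued random variable $\gamma_{lj}(g)$ around its expectation via a Chebyshev-type argument, with the required independence engineered by \Cref{parti3nonabel}. First I would record the relevant properties of the random atoms: $g_{lj}(h) = \overline{\pi_{lj}(\xi_h) - \mathbb{E}[\pi_{lj}(\xi)]}$ is centered, bounded by $2$ in modulus (since $\pi$ is unitary), and has second moment at most $\mathsf{Var}[\pi_{lj}(\xi)] \leq C$; moreover, for distinct $h, h' \in H$, any Borel functionals of $\xi_h$ and $\xi_{h'}$ are independent.

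I would handle the cases $g = \mathsf{id}$ and $g \neq \mathsf{id}$ separately. For $g = \mathsf{id}$, $\gamma_{lj}(\mathsf{id}) = |H|^{-1}\sum_{h \in H}|G_{lj}(h)|^2$ is an empirical mean of i.i.d.\ bounded random variables, so a direct Chebyshev inequality applied to the centered sum yields a concentration estimate of order $O(C/(\varepsilon^2 |H|))$, comfortably within the stated bound. For $g \neq \mathsf{id}$, the summands $G_{lj}(h)\overline{G_{lj}(g^{-1}h)}$ are not jointly independent, because $\xi_h$ appears in both the pair indexed by $h$ and the pair indexed by $gh$. This is precisely the obstruction that \Cref{parti3nonabel} is designed to overcome: using the partition $H = H_{g,0} \sqcup H_{g,1} \sqcup H_{g,2}$, I would decompose
\[
\gamma_{lj}(g) = \frac{1}{|H|}\sum_{i=0}^{2} S_i(g), \qquad S_i(g) := \sum_{h \in H_{g,i}} G_{lj}(h)\overline{G_{lj}(g^{-1}h)}.
\]
Within each $H_{g,i}$ the sets $\{h, g^{-1}h\}$ are pairwise disjoint, so the summands of $S_i(g)$ involve disjoint subfamilies of $(\xi_h)_{h \in H}$ and are mutually independent; and since $g \neq \mathsf{id}$ forces $h \neq g^{-1}h$, the centering of $g_{lj}$ together with $\xi_h \perp \xi_{g^{-1}h}$ gives $\mathbb{E}[S_i(g)] = 0$.

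Each summand has variance bounded by $\mathbb{E}[|G_{lj}(h)|^2 |G_{lj}(g^{-1}h)|^2] = (\mathsf{Var}[\pi_{lj}(\xi)])^2 \leq C^2$, and the hypothesis $|H| \geq 6L/(L-3)$ together with the cardinality bound $|H_{g,i}| \leq |H|/3 + 2$ from \Cref{parti3nonabel} gives $|H_{g,i}| \leq L|H|/9$. Applying Chebyshev's inequality to each $S_i(g)/|H|$ at threshold $\varepsilon/3$, and union-bounding over $i \in \{0,1,2\}$, yields exactly the stated failure probability $12C^2 L/(\varepsilon^2|H|)$; the hypothesis $\varepsilon > (12C^2L/|H|)^{1/2}$ then ensures that this probability is less than $1$ so that the statement is nonvacuous.

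The main obstacle is the non-independence described above: without the three-coloring of \Cref{parti3nonabel}, Chebyshev applied directly to $\gamma_{lj}(g)$ would pick up cross-covariances between terms sharing a common $\xi_h$ and would lose the $1/|H|$ decay. Once the partition is in hand the rest is routine bookkeeping, and the explicit constant $12L$ arises from combining the factor $1/9$ from the union-bound threshold, the three-color factor, and the $|H|/3 + 2 \leq L|H|/9$ slack enabled by the lower bound on $|H|$.
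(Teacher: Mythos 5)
Your proof follows essentially the same route as the paper's: three-colour $H$ via \Cref{parti3nonabel}, apply Chebyshev to each colour class, and union bound, handling $g=\mathsf{id}$ separately by a plain Chebyshev estimate. The one place the accounting slips is in the variance/constant tracking: you bound each per-summand variance by $(\mathsf{Var}[\pi_{lj}(\xi)])^2\le C^2$, and combined with the factor $9$ from the threshold $\varepsilon/3$, the cardinality bound $|H_{g,i}|\le L|H|/9$, and the union over three colours, this gives a failure probability of $3C^2L/(\varepsilon^2|H|)$, not the $12C^2L/(\varepsilon^2|H|)$ you assert you recover "exactly"; the paper's $12$ comes from using the cruder estimate $\mathsf{Var}\bigl[G_{lj}(h)\overline{G_{lj}(g^{-1}h)}\bigr]\le 4C^2$ and applying Chebyshev to $S_{g,i}/|H_{g,i}|$ at threshold $\varepsilon$ with the lower bound $|H_{g,i}|\ge |H|/L$. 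Your argument therefore still proves the proposition (indeed with a better constant), but the claim that your bookkeeping reproduces the stated constant does not follow from your own premises, so you should either adopt the paper's $4C^2$ bound or correct the stated constant.
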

\begin{proof}
For $g\neq\mathsf{id}$, let $H_{g,i}$ be as in \Cref{parti3nonabel}. By independence of the $\xi_g$, we have that the random variables $\{G_{lj}(h)\overline{G_{lj}(g^{-1}h)}, h\in H_{q,i}\}$ are i.i.d. By Chebyshev inequality, since $\mathsf{Var}[G_{lj}(h)\overline{G_{lj}(g^{-1}h)}]\le4C^2$, where $C=\sup_{\pi\in\widehat{\Gamma}}\mathsf{Var}[\pi(\xi)]$, we find for $i\in\{0,1,2\}$ and for $S_{g,i}:=\sum_{h\in H_{g,i}}G_{lj}(h)\overline{G_{lj}(g^{-1}h)}$:
\begin{equation}\label{cheby-simple-nonabel}
    \mathbb P\left(\left|\frac{1}{|H_{g,i}|}\sum_{h\in H_{g,i}}G_{lj}(h)\overline{G_{lj}(g^{-1}h)}-\mathbb E\left[G_{lj}(\mathsf{id})\overline{G_{lj}(g^{-1})}\right]\right|\geq\varepsilon\right)\le \frac{\mathsf{Var}(S_{g,i})}{\varepsilon^2|H_{g,i}|^2}\leq\frac{4C^2}{\varepsilon^2|H_{g,i}|}\le\frac{4C^2L}{\varepsilon^2|H|},
\end{equation}
where we used that $|H|\geq 6L/(L-3)$ and \Cref{parti3nonabel} to get that
\[
    \frac{1}{|H_{g,i}|}\leq\frac{L}{|H|}. 
\]
Denote by $\mathcal A_\varepsilon$ the event on the left in \eqref{cheby-simple-nonabel}. 
On this event, by triangle inequality, we find the following bound similar to the strong law of large numbers for the variables $G_{lj}(h)\overline{G_{lj}(g^{-1}h)}$:
\begin{equation}\label{nonabelcheby2}
    \mathbb P\left(\left|\gamma_{lj}(g) - \mathbb E\left[G_{lj}(\mathsf{id}) \overline{G_{lj}(g^{-1})}\right]\right|\geq\varepsilon\right)\le \frac{12C^2L}{\varepsilon^2|H|}.
\end{equation}
Since the random variables $G_{lj}(h)$, $h\in H$ are i.i.d with zero-mean, for every $g\neq\mathsf{id}$ we get that
\begin{equation}\label{qslln1}
    \mathbb{P}(|\gamma_{lj}(g)|\geq\varepsilon)\leq\frac{12C^2L}{\varepsilon^2|H|}. 
\end{equation}
On the other hand, for $g=\mathsf{id}$ and since the random variables $\{|G_{lj}(h)|^2,:\ h\in H\}$ are i.i.d, the very same calculation shows that:
\begin{equation}\label{qslln2}
    \mathbb{P}\left(\left|\frac{1}{|H|}\sum_{h\in H}|G_{lj}(h)|^2-\mathbb{E}(|G_{lj}(\mathsf{id})|^2)\right|\geq\varepsilon\right)\leq\frac{4C^2}{\varepsilon^2|H|}.
\end{equation}
Therefore, from \eqref{qslln1} and \eqref{qslln2} we get \eqref{nonabelprop4.4} as claimed.
\end{proof}
\begin{corollary}\label{cornonabelglj1}
For $L,\varepsilon$ and $\xi$ as in \Cref{nonabelvariance}, for each $\rho\in\widehat{\Gamma}$ there holds
\begin{equation}\label{estnormglj1}
    \left|\left|\widehat{G_{lj}}(\rho)\right|^2-\frac{|H|}{|\Gamma|^2}\cdot\mathbb E\left[|G_{lj}(\mathsf{id})|^2\right]\ d_\rho\right|\le \varepsilon d_\rho\frac{|H|^2}{|\Gamma|^2}.
\end{equation}
\end{corollary}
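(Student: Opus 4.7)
The plan is to combine the identity \eqref{hatgsquare}, which expresses $|\widehat{G_{lj}}(\rho)|^2$ in terms of the trace of $\widehat{\gamma_{lj}}(\rho)$, with the pointwise approximation of $\gamma_{lj}$ provided by \Cref{nonabelvariance}. Concretely, start from
\[
\left|\widehat{G_{lj}}(\rho)\right|^2 \;=\; \frac{|H|}{|\Gamma|}\,\mathsf{Tr}\!\left(\widehat{\gamma_{lj}}(\rho)\right),
\]
and expand the trace by linearity of the Fourier transform \eqref{FTrep}:
\[
\mathsf{Tr}\!\left(\widehat{\gamma_{lj}}(\rho)\right) \;=\; \frac{1}{|\Gamma|}\sum_{g\in\Gamma}\gamma_{lj}(g)\,\chi_\rho(g),\qquad \chi_\rho(g)=\mathsf{Tr}(\rho(g)).
\]
The claimed ``baseline'' value corresponds to replacing $\gamma_{lj}$ with $\mathbb E[|G_{lj}(\mathsf{id})|^2]\,\delta_{\mathsf{id}}$: that substitution produces exactly $\mathbb E[|G_{lj}(\mathsf{id})|^2]\,d_\rho/|\Gamma|$, because $\chi_\rho(\mathsf{id})=d_\rho$. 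Multiplying by $|H|/|\Gamma|$ recovers the target $\frac{|H|}{|\Gamma|^2}\mathbb E[|G_{lj}(\mathsf{id})|^2]\,d_\rho$.

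Next, the deviation to be controlled is
\[
\mathsf{Tr}\!\left(\widehat{\gamma_{lj}}(\rho)\right) - \frac{\mathbb E[|G_{lj}(\mathsf{id})|^2]\,d_\rho}{|\Gamma|}
= \frac{1}{|\Gamma|}\sum_{g\in H}\left(\gamma_{lj}(g)-\mathbb E[|G_{lj}(\mathsf{id})|^2]\,\delta_{\mathsf{id}}(g)\right)\chi_\rho(g),
\]
where the sum is restricted to $H$ because $\mathsf{supp}(\gamma_{lj})\subset H$ by \eqref{nonabelautocorr} and $\delta_{\mathsf{id}}$ is supported at the identity. On the good event from \Cref{nonabelvariance}, each term in the parenthesis is bounded by $\varepsilon$, while $|\chi_\rho(g)|\le d_\rho$ for any unitary representation. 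Hence the whole sum is at most $|H|\varepsilon\,d_\rho/|\Gamma|$, and multiplying by the prefactor $|H|/|\Gamma|$ from \eqref{hatgsquare} yields the announced bound $\varepsilon\,d_\rho\,|H|^2/|\Gamma|^2$.

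The main subtle point is the probability bookkeeping: \Cref{nonabelvariance} gives an estimate for a single $g$, whereas the argument above uses it simultaneously for all $g\in H$. This needs to be absorbed either by interpreting the ``good event'' appropriately or by a union bound over $g\in H$ (which would degrade the probability constant by a factor proportional to $|H|$); I expect this will be the only delicate issue and will not affect the deterministic inequality \eqref{estnormglj1}, only its probabilistic interpretation. Once this is handled, the remaining computation is routine: linearity of $\mathcal F$, the identity $\chi_\rho(\mathsf{id})=d_\rho$, and the bound $|\chi_\rho|\le d_\rho$ suffice to conclude.
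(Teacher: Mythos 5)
Your proof is essentially identical to the paper's: both start from \eqref{hatgsquare}, expand $\mathsf{Tr}(\widehat{\gamma_{lj}}(\rho))$ via the Fourier transform formula, substitute $\mathbb E[|G_{lj}(\mathsf{id})|^2]\delta_{\mathsf{id}}$ as the baseline, and bound the remainder term-by-term over $g\in H$ using \Cref{nonabelvariance} together with $|\chi_\rho(g)|\le d_\rho$ and $\mathsf{supp}(\gamma_{lj})=H$. The probability-bookkeeping concern you flag (that \Cref{nonabelvariance} gives a good event per fixed $g$, whereas the argument needs it simultaneously for all $g\in H$) is a genuine subtlety that the paper's own proof also elides; you are right that it affects only the constant in the probability bound, not the deterministic inequality \eqref{estnormglj1} itself.
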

\begin{proof}
We concentrate on the term on the left-hand side in \eqref{hatgsquare}, and we find, by triangle inequality and using the fact that $\max_{h\in H}|\mathsf{Tr}(\rho(h))|=d_\rho$ and from \Cref{nonabelvariance}, we get
\begin{equation*}
    \begin{split}
        &\left|\mathsf{Tr}\left(\widehat{\gamma_{lj}}(\rho)- \mathcal{F}\left(\mathbb E\left[G_{lj}(\mathsf{id})\overline{G_{lj}((\cdot)^{-1})}\right]\delta_{\mathsf{id}}\right)(\rho)\right)\right|\\
        &\leq\frac{1}{|\Gamma|}\sum_{g\in \Gamma}\left|\gamma_{lj}(g) - \mathbb E\left[G_{lj}(\mathsf{id})\overline{G_{lj}(g^{-1})}|\right]\delta_{\mathsf{id}}(g)\right|\left|\mathsf{Tr}(\rho(g))\right|\leq \varepsilon d_\rho\frac{|H|}{|\Gamma|},
    \end{split}
\end{equation*}
where for the second inequality, we also use the fact that $\mathsf{supp}(\gamma_{lj})=H$. Now note that

\[
\mathcal{F}\left(\mathbb E\left[G_{lj}(\mathsf{id})\overline{G_{lj}((\cdot)^{-1})}\right]\delta_{\mathsf{id}}\right)(\rho)=\frac{1}{|\Gamma|}\sum_{g\in\Gamma}\mathbb E\left[G_{lj}(\mathsf{id})\overline{G_{lj}(g^{-1})}\right]\delta_{\mathsf{id}}(g)\rho(g)=\mathbb{E}\left[|G_{lj}(\mathsf{id})|^2\right]\cdot\frac{\mathsf{Id}_{d_{\rho}}}{|\Gamma|},
\]

and then $\mathsf{Tr}\left(\mathcal{F}\left(\mathbb E\left[G_{lj}(\mathsf{id})\overline{G_{lj}((\cdot)^{-1})}\right]\delta_{\mathsf{id}}\right)(\rho)\right)=\mathbb{E}\left[|G_{lj}(\mathsf{id})|^2\right]\frac{d_\rho}{|\Gamma|}$. Together with \eqref{hatgsquare} this gives
\[
    \left|\frac{|\Gamma|}{|H|}\left|\widehat{G_{lj}}(\rho)\right|^2- \mathbb E\left[|G_{lj}(\mathsf{id})|^2\right]\frac{d_\rho}{|\Gamma|}\right|\le \varepsilon d_\rho\frac{|H|}{|\Gamma|}.
\]
the desired bound \eqref{estnormglj1} follows directly.
\end{proof}
\begin{corollary}\label{coronabelglj2}
Under the hypotheses of \Cref{cornonabelglj1}, there holds
\begin{equation}\label{estnormglj2}
   \left|\left\|\widehat{G_{lj}}\right\|_{L^2(\widehat{\Gamma})}^2-\frac{|H|}{|\Gamma|^2}\cdot\mathbb{E}\left[|G_{lj}(\mathsf{id})|^2\right]\right|\leq \frac{\varepsilon|H|^2}{|\Gamma|^2}.
\end{equation}
\end{corollary}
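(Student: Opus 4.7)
The plan is to obtain the claimed bound by summing the pointwise-in-$\rho$ estimate from \Cref{cornonabelglj1} across the dual $\widehat{\Gamma}$, weighted according to the Plancherel-style measure, and then collapsing the weighted sums via the dimension identity \eqref{dimrep}. No new probabilistic input is needed, since the event on which \Cref{cornonabelglj1} holds already carries the claimed probability.

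First I would recall from the definition \eqref{L2hatgamma} that
\begin{equation*}
\left\|\widehat{G_{lj}}\right\|_{L^2(\widehat{\Gamma})}^2 \;=\; \frac{1}{|\Gamma|}\sum_{\rho\in\widehat{\Gamma}} d_\rho\, \left|\widehat{G_{lj}}(\rho)\right|^2,
\end{equation*}
where $|\widehat{G_{lj}}(\rho)|^2 = \mathsf{Tr}(\widehat{G_{lj}}(\rho)\widehat{G_{lj}}(\rho)^*)$ is the Frobenius norm squared of the matrix $\widehat{G_{lj}}(\rho)$. Then I would apply $\frac{1}{|\Gamma|}\sum_\rho d_\rho\,(\cdot)$ to both sides of the bound \eqref{estnormglj1} from \Cref{cornonabelglj1} and use the triangle inequality to move the absolute value inside.

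The main term produced is $\frac{1}{|\Gamma|}\cdot\frac{|H|}{|\Gamma|^2}\,\mathbb{E}[|G_{lj}(\mathsf{id})|^2]\sum_\rho d_\rho^2$, and the error term produced is $\frac{1}{|\Gamma|}\cdot \varepsilon\frac{|H|^2}{|\Gamma|^2}\sum_\rho d_\rho^2$. At this point I would invoke the dimension identity $\sum_{\rho\in\widehat{\Gamma}} d_\rho^2 = |\Gamma|$ from \eqref{dimrep}, which kills the factor $1/|\Gamma|$ in both expressions and leaves exactly $\frac{|H|}{|\Gamma|^2}\mathbb{E}[|G_{lj}(\mathsf{id})|^2]$ and $\frac{\varepsilon|H|^2}{|\Gamma|^2}$ respectively, yielding \eqref{estnormglj2}.

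There is no genuine obstacle here; the work is purely bookkeeping, and the inequality is essentially \Cref{cornonabelglj1} integrated against the Plancherel measure on $\widehat{\Gamma}$. The only thing to verify is that the bound in \eqref{estnormglj1} holds uniformly in $\rho$ on a single event of the stated probability, which is already guaranteed by the way \Cref{cornonabelglj1} is obtained from \Cref{nonabelvariance} (the high-probability event in \Cref{nonabelvariance} controls $\gamma_{lj}$ as a function on $\Gamma$, so all Fourier components $\widehat{G_{lj}}(\rho)$ are controlled simultaneously).
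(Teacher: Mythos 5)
Your proof is correct and follows exactly the same route as the paper: apply the Plancherel-weighted sum $\frac{1}{|\Gamma|}\sum_\rho d_\rho(\cdot)$ to the pointwise bound of \Cref{cornonabelglj1}, use the triangle inequality, and collapse via $\sum_\rho d_\rho^2 = |\Gamma|$. The remark that the high-probability event is a single event inherited from \Cref{nonabelvariance} controlling $\gamma_{lj}$ on all of $\Gamma$, hence uniformly over $\rho$, is also correct and the same justification implicit in the paper.
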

\begin{proof}
By the triangle inequality, we find
\[
    \left|\left\|\widehat{G_{lj}}\right\|^2_{L^2(\widehat\Gamma)} - \frac{|H|}{|\Gamma|^2}\cdot\mathbb E\left[|G_{lj}(\xi)|^2\right]\right|\le \frac{1}{|\Gamma|}\sum_{\rho\in\widehat{\Gamma}}d_\rho\left|\left|\widehat{G_{lj}}(\rho)\right|^2 - \frac{|H|}{|\Gamma|^2}\cdot\mathbb E\left[|G_{lj}(\xi)|^2d_\rho\right]\right|,
\]
and then \eqref{estnormglj2} follows by applying \eqref{estnormglj1} and \eqref{dimrep}.
\end{proof}
\begin{proof}[Proof of \Cref{basicthmH}]
From \Cref{suppFil}, \Cref{coronabelglj2} and \eqref{nonabelest} we can bound \eqref{estFT} from above by the below (in which $\pi_{k(i,l)}$ is as in \Cref{suppFil} and $C:=\sup_{\pi\in\widehat{\Gamma}}\mathsf{Var}[\pi(\xi)]$):
\begin{equation}\label{upperbound1}
    \begin{split}
        &|\Gamma|\sum_{\pi\in\widehat{\Gamma}}d_{\pi}\sum_{i,j=1}^{d_{\pi}}\left(\sum_{l=1}^{d_{\pi}}\frac{\mathbbm{1}_{\pi_{il}(H)\neq\{0\}}}{d_{\pi_{k(i,l)}}}\left\{\frac{|H|^2}{|\Gamma|^3}\sum_{\rho\in\widehat\Gamma}d_{\rho}\cdot m(\pi_{k(i,l)},\rho|_H)\right\}^{1/2}\left\{\frac{\varepsilon|H|^2}{|\Gamma|^2}+\frac{|H|}{|\Gamma|^2}\mathbb{E}\left[|G_{lj}(\mathsf{id})|^2\right]\right\}^{1/2}\right)^2\\
        &\leq\frac{|H|^2}{|\Gamma|^2}\left(\frac{\varepsilon|H|^2}{|\Gamma|^2}+\frac{4C^2|H|}{|\Gamma|^2}\right)\sum_{\pi\in\widehat{\Gamma}}d_{\pi}^2\sum_{i=1}^{d_{\pi}}\left(\sum_{l=1}^{d_{\pi}}\frac{\mathbbm{1}_{\pi_{il}(H)\neq\{0\}}}{d_{\pi_{k(i,l)}}}\left\{\sum_{\rho\in\widehat\Gamma}d_{\rho}\cdot m(\pi_{k(i,l)},\rho|_H)\right\}^{1/2}\right)^2\\
        &=\frac{|H|^4}{|\Gamma|^4}\left(\varepsilon+\frac{4C^2}{|H|}\right)\sum_{\pi\in\widehat{\Gamma}}\frac{d_{\pi}^2}{d_{\pi_1}^2}\sum_{i=1}^{d_{\pi}}\left(\sum_{l=1}^{d_{\pi}}\mathbbm{1}_{\pi_{il}(H)\neq\{0\}}\left(\sum_{\rho\in\widehat\Gamma}d_{\rho}\cdot m(\pi_1,\rho|_H)\right)^{1/2}\right)^2,
    \end{split}    
\end{equation}
where in the last step, we used the fact that since $H$ is a normal subgroup, by \Cref{cliffordthm}, the dimensions $d_{\pi_k}$ are independent of $k$ and equal, e.g., to $d_{\pi_1}$.

\medskip

Recall that $\pi_{il}$ are the matrix coefficients of $\pi$ in a basis in which $\pi|_H$ has block structure with $d_{\pi_1}\times d_{\pi_1}$-dimensional blocks, thus for each value of index $i$, at most $d_{\pi_1}$ of the indices $i,l$ can give nonzero contributions. Furthermore, $m(\pi_{k(i,l)},\rho|_H)\le d_\rho$. Therefore we have 
\begin{equation}\label{innersumest}
    \sum_{l=1}^{d_{\pi}}1_{\pi_{il}(H)\neq\{0\}}\left(\sum_{\rho\in\widehat\Gamma}d_{\rho}\cdot m(\pi_1,\rho|_H)\right)^{1/2}\leq \sum_{l=1}^{d_{\pi}}\mathbbm{1}_{\pi_{il}(H)\neq\{0\}}\left(\sum_{\rho\in\widehat\Gamma}d_{\rho}^2\right)^{1/2}\leq |\Gamma|^{1/2}d_{\pi_1}.
\end{equation}
Inserting \eqref{innersumest} into \eqref{upperbound1} we get
\begin{equation*}\label{upperbound2}
    \left\|\widehat{\delta_{H_{\xi}}}-\widehat{\delta_H}\cdot\mathbb{E}(\xi(\cdot))\right\|_{L^2(\widehat{\Gamma})}^2\leq \frac{|H|^4}{|\Gamma|^3}\left(\varepsilon+\frac{4C^2}{|H|}\right)\sum_{\pi\in\widehat{\Gamma}}d_\pi^3\leq\frac{|H|^4}{|\Gamma|^2}\left(\varepsilon+\frac{4C^2}{|H|}\right)\max_{\pi\in\widehat\Gamma}d_\pi.
\end{equation*}
Thus we obtain the desired bound, concluding the proof of \Cref{basicthmH}.
\end{proof}

\begin{remark}\label{rmk: basicthmHsubset}
    For $A\subset\widehat{\Gamma}$, the very same proof of \Cref{basicthmH} shows that
    \begin{equation*}\label{basicthmHsubset}
        \begin{split}
            \left\|\widehat{\delta_{H_{\xi}}}-\widehat{\delta_H}\cdot\mathbb{E}(\xi(\cdot))\right\|_{L^2(A)}^2&\leq \frac{|H|^4}{|\Gamma|^3}\left(\varepsilon+\frac{4C^2}{|H|}\right)\sum_{\pi\in A}d_{\pi}^3\\
            &\leq\frac{|H|^4}{|\Gamma|^2}\left(\varepsilon+\frac{4C^2}{|H|}\right)\max_{\pi\in A}d_{\pi},
        \end{split}
    \end{equation*}
where $\|F\|_{L^2(A)}^2:=\frac{1}{|\Gamma|}\sum_{\pi\in A}d_{\pi}\mathsf{Tr}(F(\pi)F(\pi)^*)$ for $F\in L^2(\widehat{\Gamma})$. In particular, for $A=\{\pi\}$, with $\pi\in\widehat{\Gamma}$ and for (any) orthonormal basis $(e_i^{(\pi)})_{i=1}^{d_{\pi}}$ it follows that
\[
     \left\|\widehat{\delta_{H_{\xi}}}-\widehat{\delta_H}\cdot\mathbb{E}(\xi(\cdot))\right\|_{L^2(\{\pi\})}^2=\frac{d_{\pi}}{|\Gamma|}\sum_{i=1}^{d_{\pi}} \left\|\left(\widehat{\delta_{H_{\xi}}}(\pi)-\widehat{\delta_H}(\pi)\cdot\mathbb{E}(\pi(\xi))\right)e_i^{(\pi)}\right\|_{V_{\pi}}^2\leq \frac{|H|^4}{|\Gamma|^3}\left(\varepsilon+\frac{4C^2}{|H|}\right)d_{\pi}^3,
\]
and thus we get the following estimation
\begin{equation}\label{basicthmHpw}
    \sum_{i=1}^{d_{\pi}} \left\|\left(\widehat{\delta_{H_{\xi}}}(\pi)-\widehat{\delta_H}(\pi)\cdot\mathbb{E}(\pi(\xi))\right)e_i^{(\pi)}\right\|_{V_{\pi}}^2\leq\frac{|H|^4}{|\Gamma|^2}\left(\varepsilon+\frac{4C^2}{|H|}\right)d_{\pi}^2. 
\end{equation}
\end{remark}

\section{Some elements on Fourier Analysis in nilpotent Lie groups}\label{sec: recovgennil}
\subsection{Uniform subgroups in nilpotent Lie groups}\label{sec: lattnil}

We start this section with some basic background on nilpotent Lie groups; for further details, see for instance \cite{corgreen}. Let $\Gamma$ be a $d$-dimensional locally compact simply connected (lcsc) Lie group, and denote by $\mathfrak g$ its Lie algebra, with associated Lie bracket $[\cdot,\cdot]:\mathfrak g\times\mathfrak g\to\mathfrak g$. It is well-known that the exponential map $\mathsf{exp}:\mathfrak g\to \Gamma$ produces a global diffeomorphism between $\Gamma$ and its Lie algebra $\mathfrak g\cong\R^d$, and thus we can think $\Gamma$ topologically as $\R^d$. Moreover, the exponential map is a Lie isomorphism when $\mathfrak g$ is endowed with the group law given by the Baker-Campbell-Hausdorff formula.
\medskip

For the Lie algebra $\mathfrak{g}$ we define its {\em lower central series}
as the sequence of Lie subalgebras $\mathfrak{g}\trianglerighteq\mathfrak{g}_1\trianglerighteq\mathfrak{g}_2\trianglerighteq\ldots$, where 

\[\mathfrak{g}_0:=\mathfrak{g}\qquad\text{and}\qquad \mathfrak{g}_i:=[\mathfrak{g},\mathfrak{g}_{i-1}].\]
We say that $\Gamma$ is {\em nilpotent} if there exists a positive integer $s$ such that $\mathfrak{g}_{s+1}=\{0\}$; when $\mathfrak{g}_s\neq\{0\}$ and $\mathfrak{g}_{s+1}=\{0\}$, then $G$ is {\em nilpotent of step $s$}.
\medskip 

Now, assume that $\Gamma$ is nilpotent of step $s$. A subset $\{X_1,\ldots,X_d\}\subset\Gamma$ is called a {\em strong Mal'cev basis} subordinated to $\{0\}=\mathfrak{g}_{s+1}\subset\mathfrak{g}_s\subset\ldots\subset\mathfrak{g}_2\subset\mathfrak{g}_1\subset\mathfrak{g}_0=\mathfrak{g}$ if for each $0\leq i\leq s$ there holds that 
\begin{equation}\label{malcevbasis}
    \mathfrak{g}_i=\mathsf{span}\{X_1,\ldots,X_{d_i}\},\qquad\text{where}\qquad d_i:=\mathsf{dim}(\mathfrak{g}_i). 
\end{equation}

For $X_j,X_k$ in the Mal'cev basis, $j\leq k$, there exists a natural number $i$ such that $X_k\in\mathfrak{g}_i$, and in particular we get that $[X_j,X_k]\in\mathfrak{g}_{i+1}$. Since $\{X_1,\ldots,X_d\}$ is a strong Mal'cev basis, there are constants $c_{jk}^l$, $1\leq l\leq d_{i+1}$ (called {\em structural constants}), such that 

\[
[X_j,X_k]=\sum_{l=1}^{d_{i+1}}c_{jk}^{l}X_l.
\]

For a Mal'cev basis as above, we can establish a diffeomorphism $\phi:\R^d\to\Gamma$ given by the formula
\begin{equation}\label{diffnil}
\phi(x_1,\ldots,x_d):=\mathsf{exp}(x_1X_1)\cdot\mathsf{exp}(x_2X_2)\cdot\ldots\cdot\mathsf{exp}(x_dX_d),
\end{equation}

which by the Baker-Campbell-Hausdorff formula (see \cite[Proposition 1.2.7]{corgreen}), is a polynomial diffeomorphism of the form

\begin{equation}\label{polcoord}
    \phi(x)=\mathsf{exp}\left(\sum_{j=1}^d P_j(x)X_j\right),
\end{equation}

where the $P_j$'s are polynomials without constant coefficients, and 
\[P_j(x)=x_j+\text{ a polynomial on }x_{j+1},\ldots,x_d,\qquad\text{for }j=1,\ldots,d.\]

Observe that if $\Gamma$ admits a Mal'cev basis with rational structural constants, we can choose the polynomials $P_j$ with integer coefficients.
\medskip

If $\Gamma$ is a lcsc nilpotent Lie group, by a {\em uniform subgroup} in $\Gamma$, we mean a discrete subgroup $H\subset\Gamma$ such that $\Gamma/H$ is compact. The nature of the structural constants is intrinsically related to the existence of a uniform subgroup in $\Gamma$. Indeed, the way to connect both concepts is given by the next (see \cite[Theorem 5.1.8]{corgreen}):
\begin{equation*}
\begin{split}
\Gamma\text{ admits a uniform subgroup }H\Longleftrightarrow\mathfrak{g}\text{ admits a Mal'cev basis with rational structural constants.} 
\end{split}
\end{equation*}

Let $H\subset\Gamma$ be uniform and let $K\subset\Gamma$ be a subgroup. In this case, it is known that $K$ is rational (i.e., its Lie algebra admits a Mal'cev basis with rational structural constants) if and only if $H\cap K$ is uniform in $K$ (see \cite[Theorem 5.1.11]{corgreen}); this fact will be exploited in \Cref{ssec: nildisc}.					

\subsection{Induced representations}\label{ssec: indrep}

In this section, we give the basic and necessary elements of induced representations that will be used for the problem of recovering the discrete Fourier spectrum in nilpotent groups; for more details, see \cite[Chapter 6]{folland}. Induced representations are representations of a group $\Gamma$ constructed from a smaller subgroup $K$. Let $\Gamma$ be a locally compact group, $K$ be a closed subgroup, and $i:\Gamma\to\Gamma/K$ be the canonical quotient map that sends an element $x\in\Gamma$ to its equivalence class $[x]$, and where $\Gamma/K$ denotes the space of right-cosets. Consider a unitary representation $\sigma: K\to \mathcal{U}(\mathcal{H}_{\sigma})$, where the inner product and the norm in the Hilbert space $\mathcal{H}_{\sigma}$ are denoted by $\langle\cdot,\cdot\rangle_{\mathcal{H}_{\sigma}}$ and $||\cdot||_{\mathcal{H}_{\sigma}}$, respectively. 
\medskip

For our purposes, assume that $\Gamma/K$ admits a left-invariant measure $\eta$ (this is the case, for instance, when $K$ is a closed subgroup of a nilpotent Lie group $\Gamma$). Let $\mathcal{F}_{\sigma}$ be the space of all the Borel measurable functions $f:\Gamma\to\mathcal{H}_{\sigma}$ such that 

\begin{equation}\label{Fsigma}
    \int_{\Gamma/K}\|f(x)\|^2d\eta(xK)<\infty\qquad\text{and}\qquad f(\xi x)=\sigma(\xi)f(x)\text{ for }x\in\Gamma, \xi\in K.
\end{equation}

Observe that for $f,g\in\mathcal{F}_{\sigma}$ and $x\in\Gamma$, the inner product $\langle f(x),g(x)\rangle_{\sigma}$ only depends on the coset $i(x)$. Then, we can define an inner product in $\mathcal{F}_{\sigma}$ by means of the formula:

 \begin{equation}\label{innerprodF}
    \langle f,g\rangle=\langle f,g\rangle_{\mathcal{F}_{\sigma}}:=\int_{\Gamma/K} \langle f(x),g(x)\rangle_{\mathcal{H}_{\sigma}} d\eta(xK).
 \end{equation}
 
 Thus, the inner product \eqref{innerprodF} becomes $\mathcal{F}_{\sigma}$ in a complete Hilbert space. Note that the left-translation $x\mapsto L_x:\mathcal{F}_{\sigma}\to\mathcal{F}_{\sigma}$, where $L_x(f)(y):=f(xy)$, is a unitary representation of $\Gamma$. Hence, we define the {\em induced representation by $\sigma$} as the unitary representation $\mathsf{ind}_K^{\Gamma}(\sigma):\Gamma\to\mathcal{U}(\mathcal{F}_{\sigma})$ given by left-translations; more precisely

 \begin{equation}\label{indrep}
     [\mathsf{ind}_H^G(\sigma)(x)f](y):=f(xy).
 \end{equation}

For explicit computations of the Fourier transform, it is more useful to describe $\mathsf{ind}_K^{\Gamma}(\sigma)$ by acting over a suitable space of square-integrable functions. To achieve this, let $\alpha:\Gamma/K\to\Gamma$ be a Borel cross-section of $i:\Gamma\to\Gamma/K$, i.e., for which there holds $i\circ\alpha=\mathsf{id}_{\Gamma/K}$. Due to \eqref{Fsigma} we have that $f\in\mathcal{F}_{\sigma}$ is completely determined by its values on $\alpha (\Gamma/K)$; indeed, we have that for every $x\in\Gamma$ there exist $h=h(x)\in K$ and $k=k(x)\in\alpha(\Gamma/K)$ such that $x=h(x)k(x)$, and then
\begin{equation}\label{decomcross}
    f(x)=f(hk)=\sigma(h)f(k).
\end{equation}
In addition, we have that the map $\mathcal{F}_{\sigma}\ni f\mapsto f\circ\alpha\in L^2(\Gamma/K)$ becomes in an isometry.

\subsection{Elements of Kirillov's orbit method}\label{ssec: Kirillov}

In this section, we introduce the necessary ingredients of the well-understood theory of irreducible representations in nilpotent groups developed in the remarkable work of Kirillov \cite{kir}. Henceforth let $\Gamma$ be an lcsc nilpotent Lie group. For $g\in \Gamma$ denote by $\Psi_x:\Gamma\to\Gamma$ the conjugation given by $\Psi_g(x)=gxg^{-1}$ and consider the map $\mathsf{Ad}_g:=(\Psi_g)_*:\mathfrak g\to \mathfrak g$, where

\[
((\Psi_g)_* X)_p:=(d\Psi_g)_{\Psi_g^{-1}(p)}X_{\Psi_g^{-1}(p)},\qquad\text{for }X\in\mathfrak g.
\]

The {\em adjoint representation} corresponds to the map $\mathsf{Ad}:\Gamma\to GL(\mathfrak g)$ given by $\mathsf{Ad}(g):=\mathsf{Ad}_g$. The {\em coadjoint map} $\mathsf{Ad}^*:\Gamma\to GL(\mathfrak g^*)$ is defined by the formula

\begin{equation}\label{coadjoint}
    [(\mathsf{Ad}^*x)l](Y):=l(\mathsf{Ad}x^{-1}Y),\qquad\text{where }x\in\Gamma, l\in\mathfrak g^*,\text{and }Y\in\mathfrak g.
\end{equation}

Formula \eqref{coadjoint} defines an action of $\Gamma$ over $\mathfrak g^*$ given by $x\cdot l:=(\mathsf{Ad}^* x)l$. 
Each $l\in\mathfrak g^*$ determines a symplectic form $B_l:\mathfrak g\times\mathfrak g\to\mathfrak g$ defined as follows:
\begin{equation}\label{bilinear}
B_l(X,Y):=l([X,Y]),\quad\text{where }X,Y\in\mathfrak g.
\end{equation}

The {\em radical} of $B_l$ is the set $r_l:=\{Y\in\mathfrak g:\ B_l(X,Y)=0\text{ for every }X\in\mathfrak g\}$. For lcsc nilpotent Lie groups, there is a relation between the radical of $l\in\mathfrak{g}^*$ and the stabilizer $\mathsf{Stab}_l:=\{g\in\Gamma: g\cdot l=l\}$ under the coadjoint action, namely, that 
\[
\mathsf{Stab}_l=\mathsf{exp}(r_l).
\]

An {\em isotropic} subspace of $\mathfrak g$ for $B_l$ is a subspace $W\subset\mathfrak g$ for which $B_l(X,X')=0$ for every $X,X'\in W$. There are maximal subalgebras $\mathfrak m\subset\mathfrak g$ that are isotropic for $B_l$ and have (maximal) isotropic dimension $d-k$, where 
\[
k=\frac{1}{2}\mathsf{dim}(\mathfrak{g}/r_l).
\] 
Such subalgebras are called {\em polarizing subalgebras} for $l$, which always exist (see \cite[Theorem 1.3.3]{corgreen}).
\medskip

Let $\mathfrak m=\mathfrak m_l$ be a polarizing subalgebra for a fixed $l\in\mathfrak{g}^*$, and denote $K_l:=\mathsf{exp}(\mathfrak m)$. By the isotropy, the map $\chi_{l,\mathfrak m}:K_l\to S^1$ given by
\begin{equation}
    \chi_{l,\mathfrak m}(\mathsf{exp}(Y)):=e^{2\pi i l(Y)},\quad\text{where }Y\in\mathfrak m,
\end{equation}
is a $1$-dimensional irreducible representation of $K_l$. Define $\pi_{l,\mathfrak m}:=\mathsf{ind}_K^{\Gamma}(\chi_{l,\mathfrak m}):\Gamma\to\mathcal{U}(\mathcal{H}_l)$, where $\mathcal{H}_l$ is the Hilbert space where $\pi_l$ acts, defined like in \Cref{ssec: indrep}. The next theorem is the cornerstone of Kirillov's theory and claims that the above-inducing mechanism produces {\em all} the irreducible representations of the nilpotent group $\Gamma$. 

\begin{thm}\label{kirivmeth}
    Let $l\in \mathfrak g^*$. There exists a polarizing subalgebra $\mathfrak m=\mathfrak{m}_l$ for $l$ such that $\pi_{l,\mathfrak m}$ is irreducible. Moreover, every irreducible representation of $\Gamma$ arises in this way. Also, the following hold:
    \begin{enumerate}
        \item if $\mathfrak m, \mathfrak{m}'$ are two polarizing subalgebras for $l$, then $\pi_{l,\mathfrak m}\cong\pi_{l,\mathfrak{m}'}$. Thus, we can write $\pi_{l}=\pi_{l,\mathfrak m}$.

        \item $\pi_l\cong\pi_{l'}$ if and only if $l$ and $l'$ belong to the same $\mathsf{Ad}^*(\Gamma)$-orbit in $\mathfrak g^*$.
    \end{enumerate}
\end{thm}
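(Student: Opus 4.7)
The plan is to prove Theorem \ref{kirivmeth} by induction on $\dim\mathfrak{g}$, following Kirillov's original strategy as developed e.g. in \cite{corgreen}. The base case $\dim\mathfrak{g}=1$ is immediate: $\Gamma$ is abelian, all irreducibles are $1$-dimensional characters $e^{2\pi i l(\cdot)}$, and coadjoint orbits are singletons, so the bijection is trivial. For the inductive step I would exploit the fact that the center $\mathfrak{z}$ of $\mathfrak{g}$ is nonzero by nilpotency, and split on the behavior of $l$ on $\mathfrak{z}$.

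\textbf{Case 1:} some nonzero $Z\in\mathfrak{z}$ satisfies $l(Z)=0$. Then $Z\in\mathfrak{r}_l$, and the whole construction descends to the quotient $\Gamma/\exp(\mathbb{R}Z)$, where the inductive hypothesis applies. \textbf{Case 2:} $l|_{\mathfrak{z}}\ne 0$. Pick a codimension-$1$ ideal $\mathfrak{h}\supset\mathfrak{z}$ (which exists by nilpotency), set $H=\exp(\mathfrak{h})$ and $l_0:=l|_{\mathfrak{h}}$. Two technical claims now carry the argument: (a) a polarizing subalgebra $\mathfrak{m}$ for $l$ can be built by lifting a polarizer for $l_0$, using the symplectic structure that $B_l$ induces on $\mathfrak{g}/\mathfrak{r}_l$ and the chain of subspaces from the Mal'cev basis \eqref{malcevbasis}; and (b) there is an isomorphism $\pi_{l,\mathfrak{m}}\cong\mathsf{ind}_H^{\Gamma}\pi_{l_0,\mathfrak{m}}$, and this induced representation is irreducible. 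Irreducibility in (b) follows from a Mackey-type criterion: since $l|_{\mathfrak{z}}\ne 0$ and $\mathfrak{z}\subset\mathfrak{h}$, the coadjoint orbit $\mathsf{Ad}^*(\Gamma)l_0\subset\mathfrak{h}^*$ is nontrivial, which rules out nontrivial intertwiners between $\pi_{l_0,\mathfrak{m}}$ and its $\Gamma/H$-translates.

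For point (1), independence of polarization is shown by constructing an explicit unitary intertwiner between the Hilbert spaces $\mathcal{F}_{\chi_{l,\mathfrak{m}}}$ and $\mathcal{F}_{\chi_{l,\mathfrak{m}'}}$, essentially an integration against $e^{2\pi i l(\cdot)}$ over $(\mathfrak{m}\cap\mathfrak{m}')\backslash\mathfrak{m}'$ expressed in the exponential coordinates \eqref{polcoord}; unitarity is verified by Fubini plus the polynomial change of variables coming from the Baker--Campbell--Hausdorff formula. For point (2), the ``only if'' direction is direct: if $l'=\mathsf{Ad}^*(g)l$ with polarizer $\mathfrak{m}$, then $\mathsf{Ad}(g)\mathfrak{m}$ polarizes $l'$ and left translation by $g$ intertwines $\pi_{l,\mathfrak{m}}$ with $\pi_{l',\mathsf{Ad}(g)\mathfrak{m}}$. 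The converse, together with the surjectivity claim that every irreducible $\pi$ arises as some $\pi_l$, is proved together by induction: given $\pi$, restrict to $\exp(\mathfrak{z})$, where Schur's lemma forces $\pi$ to act by a scalar character $e^{2\pi i\lambda(\cdot)}$; if $\lambda$ is trivial on some central direction, quotient and apply induction, otherwise restrict $\pi$ to a codimension-$1$ ideal $H$ and use the Mackey machine to decompose $\pi|_H$ as a direct integral over a $\Gamma$-orbit in $\widehat H$, identify that orbit with a coadjoint orbit in $\mathfrak{h}^*$ by induction, and lift it back to $\mathfrak{g}^*$.

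\textbf{Main obstacle:} The delicate parts are claim (b) and the surjectivity/rigidity step. Both rely on carefully tracking coadjoint orbits under the restriction $\mathfrak{g}^*\to\mathfrak{h}^*$ and on controlling the $\Gamma/H$-action on $\widehat H$, and both use nilpotency in an essential way -- it ensures the existence of central elements and codimension-$1$ ideals at every inductive step, and it makes the Baker--Campbell--Hausdorff formula polynomial, which is what legitimizes the integral intertwiners and the measure-theoretic manipulations in the Mackey decomposition.
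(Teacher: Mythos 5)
The paper does not prove this theorem; it states it as the classical orbit-method result of Kirillov and cites \cite{kir} and \cite{corgreen} for the proof. So there is no ``paper's own proof'' to compare against, and your sketch should be judged as a standalone outline. It is, in fact, a faithful high-level summary of the standard induction found in Chapters~1--2 of \cite{corgreen}: induct on $\dim\mathfrak g$, split on $l|_{\mathfrak z}$, pass to a quotient in the easy case and to a codimension-one ideal plus Mackey theory in the hard case, and prove polarization-independence via an explicit integral intertwiner.

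A few places where the sketch is thinner than it should be, if you intend it as a genuine proof outline. First, in Case~2 you should record the reduction to $\dim\mathfrak z=1$: if $\dim\mathfrak z\ge 2$ and $l|_{\mathfrak z}\neq 0$, the kernel of $l|_{\mathfrak z}$ is still nonzero and you are back in Case~1, so Case~2 genuinely starts from a one-dimensional center. Second, picking an arbitrary codimension-one ideal $\mathfrak h\supset\mathfrak z$ is not quite what the induction uses; the crucial input is Kirillov's lemma, which gives $Z\in\mathfrak z$ and $X,Y\in\mathfrak g$ with $[X,Y]=Z$, $l(Z)\neq 0$, and $\mathfrak g=\R X\oplus\mathfrak g_0$ where $\mathfrak g_0$ is the centralizer of $Y$. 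This particular $\mathfrak h=\mathfrak g_0$ is what makes your claims (a) and (b) tractable: the polarizer for $l_0=l|_{\mathfrak g_0}$ already has the right dimension to polarize $l$ (the radicals satisfy $\mathsf{dim}\,\mathfrak r_{l_0}=\mathsf{dim}\,\mathfrak r_l+1$), and the coadjoint $\Gamma$-orbit of $l_0$ in $\mathfrak g_0^*$ is a line in the $Y$-direction, which is exactly what makes the Mackey irreducibility criterion go through. Without fixing this specific $\mathfrak h$ the dimension count in (a) and the orbit analysis in (b) and in the surjectivity step are not justified. Third, the ``Mackey machine'' step in the surjectivity argument quietly uses that $\Gamma$ is type~I and that the $\Gamma$-action on $\widehat{H}$ has locally closed orbits; for nilpotent Lie groups this is a theorem (Dixmier, or the Corwin--Greenleaf treatment), and should be flagged as an input rather than taken for granted.
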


Following the discussion at the end of \Cref{ssec: indrep}, we point out that in the case of lcsc nilpotent Lie groups, for
\begin{equation}\label{dimindrep}
    s:=\mathsf{dim}(\Gamma/K_l)=d-\mathsf{dim}(\mathfrak{m}_l)=\frac{1}{2}\mathsf{dim}(\mathfrak{g}/r_l),
\end{equation}
we have that the Hilbert space $\mathcal{H}_l$ is isometric to $L^2(\R^s)$ under the mapping $\mathcal{H}_l\ni f\mapsto f\circ \alpha\in L^2(\Gamma/K_l)$, and where $\Gamma/K_l\cong\R^s$. In addition, we get that $s$ cannot exceed $d/2$.

\begin{lemma}\label{lemma:dimirrednil}
    Under the previous notations, we have that $s<d/2$.
\end{lemma}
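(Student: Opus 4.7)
The plan is to show that the radical $r_l$ of the bilinear form $B_l$ is always nontrivial, which via the identity $s=\tfrac{1}{2}\dim(\mathfrak{g}/r_l)$ immediately gives the strict bound $s<d/2$.

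The key observation is that the center $Z(\mathfrak{g})$ of a nontrivial nilpotent Lie algebra is nonzero; this is one of the first structural facts about nilpotent Lie algebras, provable directly from the lower central series definition (the last nonzero $\mathfrak{g}_i$ lies in the center). I would then note that $Z(\mathfrak{g})\subset r_l$ for every $l\in\mathfrak{g}^*$: indeed, if $Z\in Z(\mathfrak{g})$ and $X\in\mathfrak{g}$, then $B_l(X,Z)=l([X,Z])=l(0)=0$. Consequently $\dim(r_l)\geq\dim Z(\mathfrak{g})\geq 1$, so
\begin{equation*}
s=\frac{1}{2}\dim(\mathfrak{g}/r_l)\leq\frac{d-1}{2}<\frac{d}{2}.
\end{equation*}

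There is essentially no obstacle here beyond invoking the nontriviality of the center, which follows from the hypothesis that $\Gamma$ is a genuine nilpotent Lie group (so $d\geq 1$ and $\mathfrak{g}\neq 0$). One could even sharpen the estimate by noting that $\dim(\mathfrak{g}/r_l)$ must be even (since $B_l$ descends to a nondegenerate skew form on the quotient), so when $d$ is even one automatically gets $\dim(r_l)\geq 2$; but this refinement is not needed for the stated inequality. The lemma is essentially a strict version of the bound $s\leq d/2$ already recalled in the paragraph preceding the statement, and the strictness comes entirely from centrality.
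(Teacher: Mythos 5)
Your proof is correct and takes a genuinely different — and more elementary — route than the paper's. The paper argues by contradiction: assuming $s\geq d/2$ forces $\dim(r_l)=0$, so $\mathsf{Stab}_l$ is trivial, so the coadjoint orbit of $l$ is open (being $d$-dimensional) and closed, hence by connectedness $\mathfrak{g}^*/\mathsf{Ad}^*\Gamma$ is a single orbit; invoking Kirillov's orbit method (Theorem~\ref{kirivmeth}) then gives $|\widehat{\Gamma}|=1$, and Pontryagin duality forces $\Gamma$ to be trivial, contradicting $d\geq 1$. Your argument avoids all of this: you simply observe that the last nonzero term $\mathfrak{g}_s$ of the lower central series is central, so $Z(\mathfrak{g})\neq 0$, and that $Z(\mathfrak{g})\subset r_l$ for every $l$ (since $B_l(X,Z)=l([X,Z])=l(0)=0$), whence $\dim(r_l)\geq 1$ and $s=\tfrac{1}{2}\dim(\mathfrak{g}/r_l)\leq(d-1)/2<d/2$. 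This is a direct, purely Lie-algebraic proof that does not touch the orbit method, the Plancherel picture, or Pontryagin duality; it is shorter, gives a quantitative bound $s\leq(d-1)/2$ rather than just strictness, and (as you note) shows that $s\leq(d-2)/2$ when $d$ is even because $\mathfrak{g}/r_l$ carries a nondegenerate symplectic form. The paper's approach has the minor virtue of staying entirely inside the representation-theoretic machinery that the surrounding section is already using, but your proof is the cleaner and more robust one.
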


\begin{proof}
    
Otherwise, from \eqref{dimindrep} we get that  
\[
\frac{d}{2}\leq s=\frac{d}{2}-\frac{1}{2}\mathsf{dim}(r_l)\Longrightarrow \mathsf{dim}(r_l)=0.
\]

This implies that $\mathsf{Stab}_l=\mathsf{exp}(r_l)=\{\mathsf{id}\}$. In particular, $\mathfrak{g}^*/\mathsf{Ad}^*\Gamma$ consists of only one orbit, and by \Cref{kirivmeth} we have that $|\widehat{\Gamma}|=1$. Then the trivial representation $\Gamma\ni g\mapsto 1\in\R/\Z$ is the only irreducible representation (under equivalence) of $\Gamma$, which is one dimensional and hence $\Gamma$ is Abelian, and by Pontryagin duality, we have that $\Gamma=\{\mathsf{id}\}$, which is impossible since $\Gamma\cong (\R^d,\cdot)$, where $d\geq 1$. 
\end{proof}
\subsection{Basics on the Fourier transform in nilpotent Lie groups}\label{sec: FTnil}

In this section, we provide the basic elements of Fourier Analysis which we shall use to deal with the recovery problem. Although this theory can be presented with all its abstraction in second countably, unimodular Type I groups (see \cite{algebras, folland}), as was explained above (see \Cref{prop:bft}) we are concerned on lcsc nilpotent Lie groups, and thus we present the theory in this last context; we refer to \cite{quannil} for further details. Like in the Euclidean case, the basic space to define a Fourier transform in $\Gamma$ is the Schwartz space of rapidly decreasing functions. We say that a function $f:\Gamma\to\mathbb{C}$ is in the {\em Schwartz space} $\mathcal{S}(\Gamma)$ if $f\circ\mathsf{exp}\in\mathcal{S}(\mathfrak{g})$. For $f\in\mathcal{S}(\Gamma)$ we define its {\em group Fourier transform} by the linear mapping

\begin{equation}\label{groupFT}
    (\forall\pi\in\widehat{\Gamma}),\qquad\mathcal{F}(f)(\pi)=\widehat{f}(\pi):=\int_{\Gamma} f(x)\pi(x)dx:\mathcal{H}_{\pi}\to\mathcal{H}_{\pi};
\end{equation}
more precisely, for $\phi,\psi\in\mathcal{H}_{\pi}$ we have

\[
\langle\widehat{f}(\pi)\phi,\psi\rangle_{\mathcal{H}_{\pi}}=\int_{\Gamma} f(x)\langle\pi(x)\phi,\psi\rangle_{\mathcal{H}_{\pi}} dx.
\]
\medskip

We say that a positive operator $T$ on a Hilbert space $\mathcal{H}$ is {\em trace-class} if $T$ has an orthonormal basis $(e_n)_{n\geq 1}$ with eigenvalues $\lambda_n>0$ such that $\sum_{n\geq 1}\lambda_n<\infty$; in this case we define $\mathsf{Tr}(T):=\sum_{n\geq 1}\lambda_n$. In particular, if $T$ is a positive operator and trace-class, then for every orthonormal basis $(x_n)_{n\geq 1}$ for $\mathcal{H}$ we get $\mathsf{Tr}(T)=\sum_{n\geq 1}\langle Tx_n,x_n\rangle_{\mathcal{H}}$. Finally, we say that an operator $T\in\mathcal{B}(\mathcal{H})$ is {\em trace-class} if the positive operator $|T|:=\sqrt{TT^*}$ is trace-class, where the root of a positive operator $\sqrt{\cdot}$ is in the sense of spectral calculus.
\medskip 

In analogy to the classical Fourier transform in $\R^d$ as an isometric isomorphism $\mathcal{S}(\R^d)\to\mathcal{S}(\R^d)$ (and then extended to an operator from $L^2(\R^d)$ to itself), the nature of the space $\mathcal{F}(\mathcal{S}(\Gamma))$ is well-understood as a function space after $\widehat{\Gamma}$ is equipped with an appropriate measure, called the Plancherel measure below. 
\medskip

\begin{thm}[Thm. 1.8.2 in \cite{quannil}]\label{planmeas} There exists a $\sigma$-finite measure $\mu$ in $\widehat{\Gamma}$ called the {\em Plancherel measure} such that for every $\phi\in\mathcal{S}(\Gamma)$, the operator $\mathcal{F}(\phi)=\widehat{\phi}(\pi)\in\mathcal{B}(\mathcal{H}_{\pi})$ is trace-class for any strongly continuous unitary representation $\pi$, and $\mathsf{Tr}(\widehat{\phi}(\pi))$ depends only in the class of $\pi$. Moreover, the function $\pi\mapsto\mathsf{Tr}(\widehat{\phi}(\pi))$ is integrable against $\mu$ and there holds:
    \[\phi(0)=\int_{\widehat{\Gamma}}\mathsf{Tr}(\widehat{\phi}(\pi))d\mu(\pi).\]
\end{thm}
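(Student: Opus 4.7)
The plan is to prove this via Kirillov's orbit method, using the explicit realization of irreducible representations from \Cref{kirivmeth} together with Euclidean Fourier inversion on $\mathfrak{g}^*$. First, I would fix a strong Mal'cev basis and, following the discussion in \Cref{ssec: Kirillov}, parametrize $\widehat{\Gamma}$ (up to a null set) by a cross-section $\Sigma \subset \mathfrak{g}^*$ for the coadjoint $\mathsf{Ad}^*(\Gamma)$-action. A Pukanszky-type construction provides, generically on $\mathfrak{g}^*$, a canonical polarization $l \mapsto \mathfrak{m}_l$ varying measurably in $l$, which yields a measurable field of irreducible representations $\{\pi_l\}_{l\in\Sigma}$ on Hilbert spaces all identified with $L^2(\mathbb{R}^s)$ through the cross-section $\alpha$ from \Cref{ssec: indrep}, where $s$ is as in \eqref{dimindrep}.

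Next, for $\phi \in \mathcal{S}(\Gamma)$, I would compute the operator $\widehat{\phi}(\pi_l)$ explicitly as an integral operator on $L^2(\mathbb{R}^s) \cong \mathcal{H}_l$. Using exponential coordinates so that $\phi\circ\mathsf{exp} \in \mathcal{S}(\mathfrak{g})$, together with the formula $[\pi_l(x)f](y) = \chi_{l,\mathfrak{m}_l}(h(xy))f(k(xy))$ from \eqref{indrep}, one obtains a kernel $K^l_\phi(u,v)$ on $\mathbb{R}^s \times \mathbb{R}^s$ that is (up to a smooth Jacobian coming from Baker--Campbell--Hausdorff) a partial Euclidean Fourier transform of $\phi\circ\mathsf{exp}$ in the directions complementary to $\mathfrak{m}_l$. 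Since $\phi\circ\mathsf{exp}$ is Schwartz on $\mathfrak{g}$, the kernel $K^l_\phi$ is Schwartz on $\mathbb{R}^{2s}$, forcing $\widehat{\phi}(\pi_l)$ to be trace-class with
\[
\mathsf{Tr}(\widehat{\phi}(\pi_l)) = \int_{\mathbb{R}^s} K^l_\phi(u,u)\,du.
\]
Invariance of the trace under the equivalence of representations follows from \Cref{kirivmeth}(1)--(2), so this definition descends to $\widehat{\Gamma}$.

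The Plancherel measure $\mu$ I would then construct is the push-forward to $\widehat{\Gamma} \simeq \Sigma$ of a transverse measure to the coadjoint foliation of $\mathfrak{g}^*$: on the generic stratum of top-dimensional orbits, the Lebesgue measure on $\mathfrak{g}^* \cong \mathbb{R}^d$ disintegrates into the symplectic Liouville volume along each orbit and a transverse measure whose density is $|\mathrm{Pf}(B_l)|$, the Pfaffian of the bilinear form \eqref{bilinear} expressed in the Mal'cev basis. With this choice, I would combine two Fubini decompositions: the Euclidean inversion $\phi(\mathsf{id}) = \int_{\mathfrak{g}^*} \mathcal{F}_{\mathrm{eucl}}(\phi\circ\mathsf{exp})(l)\,dl$ on the one hand, and the expression of $\mathsf{Tr}(\widehat{\phi}(\pi_l))$ as an orbit integral in $l$ on the other. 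Matching the Jacobians on the generic stratum yields $\phi(\mathsf{id}) = \int_{\widehat\Gamma}\mathsf{Tr}(\widehat{\phi}(\pi))\,d\mu(\pi)$; the lower-dimensional strata contribute nothing since they have Lebesgue measure zero in $\mathfrak{g}^*$.

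The main obstacle is the careful handling of the orbit stratification and the construction of a globally measurable polarization $l \mapsto \mathfrak{m}_l$: different coadjoint orbits may have different dimensions, and different strata require different parametrizations of both the orbit and its transversal. Vergne's canonical polarization resolves this at the cost of nontrivial bookkeeping, and the other technical point is verifying that the Pfaffian density is exactly the Jacobian needed to align Euclidean Fourier inversion on $\mathfrak{g}^*$ with the operator-trace computation on each orbit. Once these two ingredients are in place, everything else reduces to applications of Fubini and of the scalar Euclidean Plancherel theorem.
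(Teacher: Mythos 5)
The paper does not prove this result: it is quoted as Theorem 1.8.2 of Fischer--Ruzhansky \cite{quannil} and invoked as background, so there is no in-paper proof to compare yours against. Your sketch instead reconstructs the classical orbit-method argument (Kirillov character formula plus a Pukanszky/Vergne measurable polarization and the Pfaffian density on the coadjoint transversal), which is the route one finds, e.g., in Corwin--Greenleaf \cite{corgreen}, rather than the more functional-analytic treatment that \cite{quannil} develops for general graded/homogeneous groups. Both routes are legitimate; the orbit-method proof you outline is more concrete and gives the Plancherel measure explicitly as the Pfaffian-weighted Lebesgue measure on a cross-section of $\mathfrak g^*/\mathsf{Ad}^*(\Gamma)$, whereas the abstract route is less computational but applies without singling out a parametrization of $\widehat\Gamma$.

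Two points worth tightening. First, the statement as quoted asserts that $\widehat\phi(\pi)$ is trace-class ``for any strongly continuous unitary representation $\pi$'', but your kernel computation only handles irreducible $\pi=\pi_l$ realized on $L^2(\R^s)$ via \eqref{indrep}; as phrased, the claim is not literally true for arbitrary $\pi$ (take the left regular representation of $\R$: $\widehat\phi(\pi_{\mathrm{reg}})$ is a nonzero multiplication operator, hence never trace-class). The statement should be, and in the orbit-method proof is, about irreducible representatives in each equivalence class, after which the ``depends only on $[\pi]$'' clause makes sense via \Cref{kirivmeth}. Second, your assertion that the kernel $K^l_\phi$ is Schwartz on $\R^{2s}$ needs the polynomial nature of the BCH coordinates \eqref{polcoord} and of the section $\alpha$; for a general measurable polarization this is not automatic, so you should either invoke the Pukanszky/Vergne construction (which indeed produces a rational, hence polynomial, section on the generic stratum) or appeal directly to the Schwartz-kernel theorem in the form given in \cite{corgreen}. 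With those caveats, the Fubini/Jacobian matching you describe does yield the Plancherel identity, and the lower-dimensional strata are negligible exactly as you say.
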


Henceforth we fix $\mu$ the Plancherel measure of $\widehat{\Gamma}$. Note that by applying \Cref{planmeas} to $\phi_x(y):=f(yx)$ and since $\widehat{\phi}(\pi)=\pi(x)\widehat{f}(\pi)$, we obtain the Fourier inversion formula

\[
f(x)=\int_{\widehat{\Gamma}}\mathsf{Tr}(\pi(x)\widehat{f}(\pi))d\mu(\pi)=\int_{\widehat{\Gamma}}\mathsf{Tr}(\widehat{f}(\pi)\pi(x))d\mu(\pi).
\]

An operator $T\in\mathcal{B}(\mathcal{H})$ is called {\em Hilbert-Schmidt} if $TT^*$ is trace-class; we write 

\[
\|T\|_{\mathsf{HS}(\mathcal{H})}^2:=\mathsf{Tr}(TT^*).
\]

The Fourier inversion formula applied to $\phi*\phi^*$, where $\phi^*(x)=\overline{\phi(x^{-1})}$ yields the Plancherel formula below.

\begin{thm}[Thm. 1.8.5 in \cite{quannil}]
Under the notation of \Cref{planmeas}, for $\phi\in\mathcal{S}(\Gamma)$, for every strongly continuous $\pi\in\widehat{\Gamma}$, the operator $\widehat{\phi}(\pi)$ is Hilbert-Schmidt, and $\|\widehat{\phi}(\pi)\|_{\mathsf{HS}}$ is constant in the equivalence class of $\pi$. Moreover $\pi\mapsto\|\widehat{\phi}(\pi)\|_{\mathsf{HS}(\mathcal{H}_{\pi})}^2$ is $\mu$-integrable and
    \[
    \|\phi\|_{L^2(\Gamma)}^2=\int_{\Gamma} |\phi(x)|^2dx=\int_{\widehat{\Gamma}}\|\widehat{\phi}(\pi)\|_{\mathsf{HS}(\mathcal{H}_{\pi})}^2 d\mu(\pi).
    \]
\end{thm}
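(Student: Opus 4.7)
The plan is to derive the Plancherel identity by applying the inversion formula of \Cref{planmeas} to the auxiliary function $\psi := \phi * \phi^*$ evaluated at the identity element of $\Gamma$, exactly as suggested in the paragraph preceding the statement.

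First I would verify that $\psi \in \mathcal{S}(\Gamma)$: via the exponential diffeomorphism and the polynomial nature of the group law (see \eqref{polcoord}), the Schwartz space is stable under both the involution $\phi \mapsto \phi^*$ and convolution, so $\psi$ is again Schwartz. A direct computation of the convolution at the identity, combined with the fact that nilpotent simply connected Lie groups are unimodular (so that $dx$ is inversion-invariant), gives
\[
\psi(e) = \int_\Gamma \phi(y)\,\overline{\phi(y)}\,dy = \|\phi\|_{L^2(\Gamma)}^2.
\]

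Next I would compute the group Fourier transform of $\psi$. Unimodularity together with the unitarity identity $\pi(x^{-1}) = \pi(x)^*$ gives, after the change of variables $y = x^{-1}$,
\[
\widehat{\phi^*}(\pi) = \int_\Gamma \overline{\phi(x^{-1})}\,\pi(x)\,dx = \widehat{\phi}(\pi)^*,
\]
and the convolution-to-product rule $\widehat{\phi*\phi^*}(\pi) = \widehat{\phi}(\pi)\,\widehat{\phi^*}(\pi)$ (justified by Fubini once boundedness of the integrands is checked for Schwartz data) then yields $\widehat{\psi}(\pi) = \widehat{\phi}(\pi)\,\widehat{\phi}(\pi)^*$. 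By \Cref{planmeas} applied to $\psi$, this positive operator is trace-class, hence $\widehat{\phi}(\pi)$ is Hilbert--Schmidt, with
\[
\mathsf{Tr}(\widehat{\psi}(\pi)) = \mathsf{Tr}\bigl(\widehat{\phi}(\pi)\widehat{\phi}(\pi)^*\bigr) = \|\widehat{\phi}(\pi)\|_{\mathsf{HS}(\mathcal{H}_\pi)}^2.
\]
Class invariance is automatic: any unitary intertwiner $U\colon \mathcal{H}_\pi \to \mathcal{H}_{\pi'}$ gives $\widehat{\phi}(\pi') = U\,\widehat{\phi}(\pi)\,U^{-1}$, and the Hilbert--Schmidt norm is preserved by unitary conjugation.

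Finally, \Cref{planmeas} asserts that $\pi \mapsto \mathsf{Tr}(\widehat{\psi}(\pi))$ is $\mu$-integrable, which is precisely the $\mu$-integrability of $\pi \mapsto \|\widehat{\phi}(\pi)\|_{\mathsf{HS}(\mathcal{H}_\pi)}^2$, and the inversion formula of that theorem evaluated at the identity reads
\[
\|\phi\|_{L^2(\Gamma)}^2 = \psi(e) = \int_{\widehat{\Gamma}} \mathsf{Tr}(\widehat{\psi}(\pi))\,d\mu(\pi) = \int_{\widehat{\Gamma}} \|\widehat{\phi}(\pi)\|_{\mathsf{HS}(\mathcal{H}_\pi)}^2\,d\mu(\pi),
\]
as required. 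The main conceptual step is the identification $\widehat{\phi*\phi^*}(\pi) = \widehat{\phi}(\pi)\widehat{\phi}(\pi)^*$ at the operator level, since even though every individual $\widehat{\phi}(\pi)$ is a priori only a bounded operator, this identity is what upgrades it to Hilbert--Schmidt and, once combined with \Cref{planmeas}, packages the $L^2$ isometry into a single line of computation.
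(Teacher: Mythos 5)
Your proposal is correct and follows exactly the approach the paper itself points to in the sentence immediately preceding the statement (``The Fourier inversion formula applied to $\phi*\phi^*$\,\ldots\,yields the Plancherel formula below''); the paper delegates the details to the cited reference, and your write-up fills them in faithfully — stability of $\mathcal{S}(\Gamma)$ under $*$ and convolution, $\widehat{\phi^*}(\pi)=\widehat{\phi}(\pi)^*$ via unimodularity, $\widehat{\phi*\phi^*}(\pi)=\widehat{\phi}(\pi)\widehat{\phi}(\pi)^*$, and evaluation of the inversion formula from \Cref{planmeas} at the identity.
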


Observe that $\widehat{\phi}(\pi)$ is a bounded operator on $\mathcal{H}_{\pi}$ for $\phi\in C_c(\Gamma)$. The Plancherel formula provides a first approach to the nature of the group Fourier transform as an operator, namely, $\mathcal{F}:C_c(\Gamma)\to L^2(\widehat{\Gamma})$ is an isometry, where the {\em direct integral}

\[
L^2(\widehat{\Gamma}):=\int_{\widehat{\Gamma}}^{\oplus}\mathsf{HS}(\mathcal{H}_{\pi})d\mu(\pi)
\]
is the Hilbert space of the $\mu$-measurable fields of Hilbert-Schmidt operators $A:\widehat{\Gamma}\ni\pi\mapsto A_{\pi}\in\mathsf{HS}(\mathcal{H}_{\pi})$ for which

\begin{equation}
    \|A\|_{L^2(\widehat{\Gamma})}^2:=\int_{\widehat{\Gamma}}\|A_{\pi}\|_{\mathsf{HS}(\mathcal{H}_{\pi})}^2 d\mu(\pi)<\infty.
\end{equation}

Here measurability means that 
\[
\widehat{\Gamma}\ni\pi\mapsto\langle A_{\pi}\varphi,\psi\rangle_{\mathcal{H}_{\pi}}\qquad\text{is measurable }\forall\varphi,\psi\in\mathcal{H}_{\pi};
\]
for additional details on direct integrals and operators fields, see 
\cite[Appendix B]{quannil} or \cite[Section 7.4]{folland}. Like in the Euclidean case, the group Fourier transform can be extended in an isometry $\mathcal{F}:L^2(\Gamma)\to L^2(\widehat{\Gamma})$ which is surjective. 
\medskip

Now, in order to compute the Fourier transform of the Dirac delta $\delta_p,p\in\Gamma$, we need to define the group Fourier transform of a measure. First, let $L^{\infty}(\widehat{\Gamma})$ be the space of $\mu$-measurable fields $A:\widehat{\Gamma}\ni\pi\mapsto A_{\pi}\in\mathcal{B}(\mathcal{H}_{\pi})$ such that
    \[\|A\|_{L^{\infty}(\widehat{\Gamma})}:=\sup_{\pi\in\widehat{\Gamma}}\|A_{\pi}\|_{\mathcal{B}(\mathcal{H}_{\pi})}<\infty,\]
    where the supremum is the essential supremum with respect to the Plancherel measure $\mu$. The space $L^{\infty}(\widehat{\Gamma})$ together the pointwise composition
    \[AB:\widehat{\Gamma}\ni\pi\mapsto A_{\pi}B_{\pi}\in\mathcal{B}(\mathcal{H}_{\pi}),\]
    and the norm $\|\cdot\|_{L^{\infty}(\widehat{\Gamma})}$ is a von Neumann algebra.
    \medskip

Let $\mathcal{B}_L(L^2(\Gamma))$ be the subspace of operators $T\in\mathcal{B}(L^2(\Gamma))$ which are left-invariant, i.e., such that $T\circ L_x=L_x\circ T$, for every $x\in\Gamma$, and where $L_x(f)(y):=f(x^{-1}y)$ is the left-translation operator.
\medskip

For each $\mu$-measurable field of uniformly bounded operators $A\in L^{\infty}(\widehat{\Gamma})$, define the operator $T_A\in\mathcal{B}_L(L^2(\Gamma))$ by
\[
T_A\phi:=\mathcal{F}^{-1}(A_{(\cdot)}\widehat{\phi}(\cdot)).
\]

It can be shown that the map $T:L^{\infty}(\widehat{\Gamma})\ni A\mapsto T_A\in \mathcal{B}_L(L^2(\Gamma))$ is a von Neumann algebra isomorphism and an isometry. Moreover, every operator $T\in\mathcal{B}_L(L^2(\Gamma)) $ arises in this way, namely, there exists a unique (up to zero measure sets with respect $\mu$) $\mu$-measurable field $A^{(T)}\in L^{\infty}(\widehat{\Gamma})$ such that for every $\phi\in L^2(\Gamma)$:
\begin{equation}\label{convdist}
\widehat{T\phi}(\pi)=A^{(T)}_{\pi}\widehat{\phi}(\pi)\qquad\mu\text{-almost everywhere}.
\end{equation}

Thus, we consider $\mathcal{K}(\Gamma)$ be the space of all the distributions $\kappa\in (C_c^{\infty})'(\Gamma)$ for which the convolution operator $C_c^{\infty}(\Gamma)\ni f\mapsto f*\kappa$ extends to a bounded operator $T_{\kappa}$ on $L^2(\Gamma)$. Thus, we define the Fourier transform of the distribution $\kappa$ by
\[
\widehat{\kappa}(\pi):=A^{(T_{\kappa})}_{\pi},\qquad\text{where }A^{(T_{\kappa})}\text{ is like in }\eqref{convdist}.
\]
Note that the convolution product between a function $f\in C_c^{\infty}(\Gamma)$ and a distribution $\kappa\in (C_c^{\infty}(\Gamma))'$ is given by
\[
f*\kappa(x):=\langle\kappa, L_{x^{-1}}f^*\rangle.
\]

In other words, $\widehat{\kappa}$ is defined in such a way that
\[
\mathcal{F}(T_{\kappa}f)(\pi)=\widehat{\kappa}(\pi)\widehat{f}(\pi),\qquad f\in L^2(\Gamma). 
\]

Over $\mathcal{K}(\Gamma)$ we can define an involution $\kappa^*(x):=\overline{\kappa(x^{-1})}$. For $\kappa_1,\kappa_2\in\mathcal{K}(\Gamma)$, consider the operators $T_{\kappa_1},T_{\kappa_2}\in B_L(L^2(\Gamma))$ as above and denote by $\kappa_1*\kappa_2$ its kernel. Also we can consider a norm in $\mathcal{K}(\Gamma)$ via the formula:
\[
\|\kappa\|_{\mathcal{K}(\Gamma)}:=\|f\mapsto f*\kappa\|_{\mathcal{B}(L^2)(\Gamma)}.
\]
Hence the space $\mathcal{K}(\Gamma)$ together with the involution, convolution product, and norm defined above is a von Neumann algebra isomorphic to $\mathcal{B}_L(L^2(\Gamma))$. In addition, the group Fourier transform $\mathcal{F}:\mathcal{K}(\Gamma)\to L^{\infty
}(\widehat{\Gamma})$ is an isometric isomorphism. 
\medskip

Denote by $\mathcal{M}(\Gamma)$ be the space of finite complex Borel measures in $\Gamma$. Then we have the following inclusions:
\[
L^1(\Gamma)\subset\mathcal{M}(\Gamma)\subset\mathcal{K}(\Gamma);
\]
this allows us to define the {\em group Fourier transform} of a finite complex Borel measure $\eta\in\mathcal{M}(\Gamma)$ by
    \begin{equation}\label{FTmeas}
        (\forall\pi\in\widehat{\Gamma}),\qquad\mathcal{F}(\eta)(\pi)=\widehat{\eta}(\pi):=\int_{\Gamma} \pi(x)d\eta(x),
        \end{equation}
    where the above integral is in the sense of Bochner.
\section{Recovery of the discrete spectrum in nilpotent Lie groups}\label{ssec:recovnil}

Let $\Gamma$ be an $d$-dimensional lcsc nilpotent Lie group, $H\subset\Gamma$ be an {\em $r$-dimensional lattice}, $r\leq d$, which means that $H$ is an $r$-dimensional uniform subgroup of $\Gamma$, satisfying that $\mathsf{exp}^{-1}(H)$ is an additive subgroup of $\mathfrak{g}\cong\R^d$. Also, assume that $L\subset\Gamma$ is a lattice containing $H$. From \cite[Theorem 5.1.6]{corgreen}, there exists a Mal'cev basis $\{X_1,\ldots,X_d\}$ of $\mathfrak{g}$ with rational structure constants such that

\[
L=\mathsf{exp}(\Z X_1)\cdot\ldots\cdot\mathsf{exp}(\Z X_d)\subset\Gamma.
\]

For this basis, we have the natural identifications $\Gamma\cong\phi^{-1}(\Gamma)=\R^d$ and $L\cong\phi^{-1}(L)=\Z^d$ under the diffeomorphism $\phi^{-1}:\Gamma\to\R^d$ defined in \eqref{diffnil}. Thus, without loss of generality, we can assume that $\Gamma=(\R^d,\cdot)$ and $L=(\Z^d,\cdot)$, where the law group is given by the Baker-Campbell-Hausdorff formula. In addition, since $\mathsf{exp}^{-1}(H)$ is an additive subgroup of $\mathfrak{g}$, after a rearrangement of the basis elements if it is necessary, we have that there must exist $q_1,\ldots,q_r\in\Z\setminus\{0\}$ such that 
\[
\mathsf{exp}^{-1}(H)=q_1^{-1}\Z X_1+\ldots+q_r^{-1}\Z X_r\subset\mathfrak{g}.
\]

Then, after applying the polynomial form of $\phi:\R^d\to\Gamma$ given by \eqref{polcoord}, we have that there are positive integers $N_1,\ldots, N_r$ such that

\begin{equation}\label{lattpointnil}
     H=\bigoplus_{i=1}^r N_i\Z\subset\R^r,
\end{equation}

in which the identification is an group homomorphism when restricted to each direct summand.
\medskip

If $\mathsf{vol}_r$ denotes the Lebesgue measure in $\R^r$, and since $\mathsf{vol}( B_R^r)\sim R^r$ where $B_R^r:=[-R/2,R/2)^r$, we have that there exists a positive constant $\lambda_r=\lambda_r(H)$ such that
\begin{equation}\label{asympdenlat}
    \left||H\cap B_R^r|-\lambda_r\mathsf{vol}_r\left(B_R^r\right)\right|=o\left(\mathsf{vol}_r(B_R^r)\right)=o(R^r),
\end{equation}

Write $\delta_H:=\sum_{p\in H}\delta_p$ and keep the notations of the representation theory of lcsc nilpotent Lie groups from \Cref{ssec: Kirillov}. Following \eqref{FTmeas} and in view of \eqref{asympdenlat}, we define the {\em (averaged) Fourier transform of $H$} by

\begin{equation}\label{FTnil}
    \mathcal{F}(\delta_H)(\pi_l):=\lim_{R\to\infty}\frac{\mathcal{F}(\delta_{H\cap B_R^r})(\pi_l)}{R^r}=\lim_{R\to\infty}\frac{1}{R^r}\sum_{p\in H\cap B_R^r}\pi_l(p);
\end{equation}

more precisely, for $\varphi\in\mathcal{H}_l\cong L^2(\R^s)$ and $\psi\in C_c(\R^s)$, the following formula holds

\begin{equation}
    \langle\mathcal{F}(\delta_H)(\pi_l)\varphi,\psi\rangle_{L^2(\R^s)}=\lim_{R\to\infty}\frac{1}{R^r}\sum_{p\in H\cap B_R^r}\int_{\R^s}\varphi(px)\psi(x)dx.
\end{equation}
\medskip

Let $\xi_p, p\in H$ be i.i.d random vectors defined in a probability space $(\Omega,\mathcal{T},\mathbb{P})$ with $\xi_p\sim\xi$ and let $H_{\xi}$ be the realization set $\{p\cdot\xi_p:\ p\in H\}$. In this section, our goal is to obtain the recovery of the Fourier Transform $\widehat{\delta_H}$ from its random perturbations under suitable moment conditions over the perturbations $\xi_p$.

\begin{thm}\label{nilrecov}
Under the previous notations, assume that there is a positive number $\varepsilon$ such that $\mathbb{E}(|\mathsf{proj}_r(\xi)|^{r+\varepsilon})<\infty$, where $\mathsf{proj}_r:\R^d\to \R^r$ is the orthogonal projection onto the first $r$ coordinates. Then almost surely, for every $l\in\mathfrak{g}^*$ 
\begin{equation}\label{recovnil}
   \mathcal{F}(\delta_{H_{\xi}})(\pi_l)=\mathcal{F}(\delta_H)(\pi_l)\mathbb{E}[\pi_{l}(\xi)],
\end{equation}

where $\mathcal{F}(\delta_{H_{\xi}})$ stands for the limit $\lim_{R\to\infty}\frac{1}{R^r}\mathcal{F}\left(\delta_{H_{\xi}\cap\left[-R/2,R/2\right)^d}\right)$. 
\end{thm}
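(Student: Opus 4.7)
The strategy is to verify \eqref{recovnil} in the weak operator topology on $\mathcal{H}_l \cong L^2(\R^s)$ by testing against vectors $\varphi \in L^2(\R^s)$ and $\psi \in C_c(\R^s)$, and reducing to a strong law of large numbers for bounded independent random variables indexed by $H \cap B_R^r$. First I would reduce the domain of summation: the set $H_\xi \cap [-R/2, R/2)^d$ and the set $\{p \cdot \xi_p : p \in H \cap [-R/2, R/2)^r\}$ differ only by points that cross the boundary of $B_R^d$ under the perturbation. Because in the Mal'cev coordinates \eqref{polcoord} the first $r$ coordinates of $p \cdot \xi_p$ are a polynomial with leading part $p + \mathrm{proj}_r(\xi_p)$ (with the remaining coordinates of $p$ being zero), the event $\{p \cdot \xi_p \in B_R^d \setminus B_{R-t}^d\}$ is controlled by $\{|\mathrm{proj}_r(\xi_p)| \gtrsim t\}$. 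The hypothesis $\mathbb{E}[|\mathrm{proj}_r(\xi)|^{r+\varepsilon}] < \infty$ combined with a Borel--Cantelli argument then yields a.s.
\[
    \frac{\#\bigl((H_\xi \cap B_R^d) \triangle \{p \cdot \xi_p : p \in H \cap B_R^r\}\bigr)}{R^r} \xrightarrow[R \to \infty]{} 0,
\]
so that $\frac{1}{R^r} \mathcal{F}(\delta_{H_\xi \cap B_R^d})(\pi_l)$ has the same limit as $\frac{1}{R^r} \sum_{p \in H \cap B_R^r} \pi_l(p \cdot \xi_p)$.

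Next I would use that $\pi_l$ is a group homomorphism to write $\pi_l(p \cdot \xi_p) = \pi_l(p) \pi_l(\xi_p)$ and split
\[
    \frac{1}{R^r} \sum_{p \in H \cap B_R^r} \pi_l(p) \pi_l(\xi_p) = \frac{1}{R^r} \sum_{p \in H \cap B_R^r} \pi_l(p) \cdot \mathbb{E}[\pi_l(\xi)] + \frac{1}{R^r} \sum_{p \in H \cap B_R^r} \pi_l(p) \bigl(\pi_l(\xi_p) - \mathbb{E}[\pi_l(\xi)]\bigr).
\]
The first (deterministic) term converges in WOT to $\mathcal{F}(\delta_H)(\pi_l) \cdot \mathbb{E}[\pi_l(\xi)]$ by the very definition \eqref{FTnil}.

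For the fluctuation term, fix $\varphi, \psi$ and set $W_p := \langle \pi_l(p)(\pi_l(\xi_p) - \mathbb{E}[\pi_l(\xi)])\varphi, \psi\rangle = \langle (\pi_l(\xi_p) - \mathbb{E}[\pi_l(\xi)])\varphi, \pi_l(p^{-1})\psi\rangle$. Because the $\xi_p$ are i.i.d.\ and $\pi_l$ is unitary, the $W_p$ are independent complex random variables with mean zero and $|W_p| \leq 2\|\varphi\|_{L^2}\|\psi\|_{L^2}$. Hence
\[
    \mathrm{Var}\Biggl(\frac{1}{R^r} \sum_{p \in H \cap B_R^r} W_p\Biggr) \leq \frac{C_{\varphi,\psi}\,|H \cap B_R^r|}{R^{2r}} = O(R^{-r}).
\]
Convergence in $L^2$ is immediate; to upgrade to a.s.\ convergence along the continuous parameter $R$, I would apply Borel--Cantelli along the dyadic sequence $R_n = 2^n$ (using $\sum_n 2^{-rn} < \infty$ since $r \geq 1$) and control the oscillations between $R_n$ and $R_{n+1}$ by a maximal inequality for the random walk $R \mapsto \sum_{p \in H \cap B_R^r} W_p$, exploiting again the uniform bound on $|W_p|$.

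Finally, to obtain the statement simultaneously for every $l \in \mathfrak{g}^*$ on a single a.s.\ event, I would fix a countable dense family $\{(\varphi_j, \psi_j)\}$ in $L^2(\R^s) \times C_c(\R^s)$, run the above argument for each $(\varphi_j, \psi_j)$, and parametrize the $\mathrm{Ad}^*\Gamma$-orbits on $\mathfrak{g}^*$ by a countable scheme compatible with the Kirillov correspondence of \Cref{kirivmeth}; the continuous dependence $l \mapsto \langle \pi_l(\cdot)\varphi_j, \psi_j\rangle$ inside each orbit then extends the limit to all $l$. The main obstacle I anticipate is Step~1: carefully checking that the polynomial BCH perturbation of $p$ by $\xi_p$ only exits $B_R^d$ through the $r$ lattice directions, so that a moment bound on $\mathrm{proj}_r(\xi)$ alone (rather than on all of $\xi$) really suffices to absorb the boundary error into the $o(R^r)$ asymptotic count of \eqref{asympdenlat}.
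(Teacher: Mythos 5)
Your proof takes a genuinely different route from the paper's. The paper reduces \Cref{nilrecov} to the finite-group recovery \Cref{basicthmH} via an explicit discretization: it constructs finite groups $H_N\subset\Gamma_{km,N}^d$, discretizes the perturbations $\xi\mapsto\xi^{(km,N)}$, proves two discrete-to-continuum limit lemmas (\Cref{nilDFT-FT} and \Cref{lemmalimxiDFT-nil}) identifying the continuum Fourier transform with a rescaled limit of the DFT, and then passes to the limit of the quantitative error bound \eqref{asbasic-nil} with $\varepsilon=\varepsilon_N\to 0$ chosen so that $\varepsilon_N|H_N|\to\infty$, invoking Borel--Cantelli at the very end. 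You instead work entirely in the continuum: after the boundary reduction you factor $\pi_l(p\cdot\xi_p)=\pi_l(p)\pi_l(\xi_p)$ via the homomorphism property and run a direct Chebyshev/Borel--Cantelli strong law for the bounded, independent, mean-zero fluctuations $W_p=\langle\pi_l(p)(\pi_l(\xi_p)-\mathbb{E}[\pi_l(\xi)])\varphi,\psi\rangle$. Your argument is shorter and isolates the probabilistic mechanism cleanly, exploiting unitarity of $\pi_l$ for the uniform bound $|W_p|\leq 2\|\varphi\|\|\psi\|$; the paper's longer route is deliberate, since \Cref{Thm A} with its explicit bound \eqref{apperrbound} is the effective/quantitative content that the authors want as a byproduct of the proof, and a bare SLLN does not give that.

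Two remarks on gaps that your proposal shares with the paper but handles differently. First, the boundary-escape step: you correctly flag that in Mal'cev coordinates the BCH product $p\cdot\xi_p$ has polynomial corrections that can push the later $d-r$ coordinates out of $B_R^d$ even when $\mathrm{proj}_r(\xi_p)$ is small (already visible in the Heisenberg case with a one-dimensional $H$: the third coordinate of $p\cdot\xi_p$ contains a term $n\cdot(\xi_p)_2$). The moment hypothesis $\mathbb{E}[|\mathrm{proj}_r(\xi)|^{r+\varepsilon}]<\infty$ alone does not obviously control this escape, and the paper side-steps the issue by citing Yakir's Euclidean Lemma~3.1 where no such polynomial corrections arise; your explicit acknowledgment of this as the main obstacle is appropriate, and resolving it carefully would strengthen both proofs. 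Second, the upgrade from a fixed $l$ (or fixed $\varphi,\psi$) to a single a.s.\ event valid for all $l\in\mathfrak{g}^*$: you propose a countable dense family plus continuity in $l$, but the continuity of $l\mapsto\langle\pi_l(\cdot)\varphi,\psi\rangle$ across a change of polarizing subalgebra is not automatic from \Cref{kirivmeth} and would need an argument; note that the paper, which proves the statement for rational $l$ as in \eqref{ratdual}, is also silent on this uniformization.

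In short: correct strategy, genuinely different and more elementary than the paper's, at the cost of losing the effective error rates; the two soft spots you would need to address are precisely the ones the paper also leaves thin, and you identify at least the first of them yourself.
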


In what follows, we are dedicated to the proof of \Cref{nilrecov}. Let $M, N$ be positive integers, $N_i|N$ for every $i=1,\ldots,r$, and consider the inclusions:
\begin{equation}\label{reslatt}
    N\mathbb{Z}\subset N_i\Z\subset\frac{1}{M}\Z,\qquad\text{for }1\leq i\leq r.
\end{equation}
By taking the quotient by $N\Z$, we find an embedding of the discrete group
\[
    H_N=\Gamma^r_N:=\bigoplus_{i=1}^r N_i\Z/N\Z\cong\bigoplus_{i=1}^r\Z/\frac{N}{N_i}\Z
\]
into
\[
    \Gamma_{M,N}^d:=\bigoplus_{i=1}^d\left(\frac{1}{M}\Z/N\Z\right),
\]
It should be noted that, by the rationality of $\R^d$ and $\R^r$, the rescalings $N\Z^d$ and $\frac{1}{M}\Z^d$ are still lattices in $\R^d$, and thus $H_N\subset\Gamma_{M,N}^d$ are indeed finite groups.
\medskip

To prove \Cref{nilrecov}, we will "discretize" the random perturbations $p\cdot\xi_p,p\in H$, restricted to $[-N/2,N/2)_{i=1}^d$ by approximating them by group elements taking values in a finer and finer subgrid 
\[
\frac{1}{M}\Z^d\cap\left[-\frac{N}{2},\frac{N}{2}\right).
\]
In the limit $M,N\to\infty$, after showing that the DFT converges (in a very precise sense to be stated below) to the usual Fourier transform, we will get the desired result as a consequence of an adequate quantitative recovery of the DFT (see \Cref{corobasic-nil}).

\subsubsection{Discretizing $\xi$}\label{ximnnil}
With the inclusions $N\Z\subset N_i\Z\subset\frac1M\Z\subset \mathfrak{g}$ in mind, for $i=1,\ldots,r$, we restrict and discretize the maps $\xi_{(\cdot)}:H\times\Omega\to\R^d$ defined immediately before \Cref{nilrecov}, to obtain $\xi^{(M,N)}_{(\cdot)}: H\times\Omega\to \Gamma^d_{M,N}$, as follows.
For $p\in H\cap\left[-\frac{N}{2},\frac{N}{2}\right)^d$ there exists a unique $p'\in H_N$ represented by $p$. 
Also, note that the inclusion $\frac1M\Z^d\subset\R^d$ passes to the quotient, and we get an inclusion 

\[\Gamma^d_{M,N}\subset \R^d/N\Z^d.\]

Therefore for $p'\in H_N$ we set $\xi_{p'}^{(M,N)}(\omega):=v\in\Gamma_{M,N}^d$ defined via the property that the $N\Z^d$-quotient of $\xi_p(\omega)$, denoted by $\xi_{p'}^{(N)}(\omega)$, falls into the cube 

\[v\cdot\left[-\frac{1}{2M},\frac{1}{2M}\right)^d.\] 

With these notations, let $(H_N)_{\xi^{(M,N)}}=\{p\cdot\xi_p^{(M,N)}:\ p\in H_N\}\subset\Gamma_{M,N}^d$.

\medskip

The following claims are direct to prove and are left to the reader:
\begin{claim}\label{disxi1}
    $\{\xi_p^{(M,N)}:\ p\in H_N\}$ are independent.
\end{claim}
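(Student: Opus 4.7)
The plan is to reduce the claim to the standard fact that Borel-measurable functions of disjoint subfamilies of an independent family of random variables are themselves independent. Concretely, the strategy has three short steps.

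First, I would unpack the indexing. For each class $p'\in H_N$ there is a unique representative $p=p(p')\in H\cap[-N/2,N/2)^d$, and the assignment $p'\mapsto p(p')$ is a well-defined injection of $H_N$ into $H$. In particular, distinct elements of $H_N$ correspond to distinct indices in the original i.i.d.\ family $(\xi_p)_{p\in H}$.

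Second, I would verify that $\xi_{p'}^{(M,N)}$ is a Borel-measurable function of the single random vector $\xi_{p(p')}$. By construction, one obtains $\xi_{p'}^{(M,N)}(\omega)$ from $\xi_{p(p')}(\omega)$ by two deterministic operations: reduction modulo $N\Z^d$, giving the point $\xi_{p'}^{(N)}(\omega)\in\R^d/N\Z^d$, followed by the rounding that sends $\xi_{p'}^{(N)}(\omega)$ to the unique $v\in\Gamma_{M,N}^d$ with $\xi_{p'}^{(N)}(\omega)\in v\cdot[-1/(2M),1/(2M))^d$. Both operations are Borel-measurable, so the composition is Borel-measurable and depends on $\omega$ only through $\xi_{p(p')}(\omega)$.

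Third, I would invoke the hypothesis preceding \Cref{nilrecov} that the family $(\xi_p)_{p\in H}$ is i.i.d., and hence independent. Since the $\xi_{p'}^{(M,N)}$ are measurable functions of pairwise disjoint singletons from this independent family, the standard grouping lemma for independence (measurable functions of independent random variables are independent) gives independence of $\{\xi_{p'}^{(M,N)}:p'\in H_N\}$.

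There is essentially no obstacle here: the content of the claim is purely that discretization preserves independence, and the injectivity of $p'\mapsto p(p')$ on the fundamental domain rules out any accidental shared dependence between different classes. The only small care point is to write out explicitly why the two discretization steps are Borel-measurable, so as not to leave a gap; this can be done by noting that reduction modulo $N\Z^d$ is continuous and rounding to the cube $v\cdot[-1/(2M),1/(2M))^d$ is the indicator-decomposition of $\R^d/N\Z^d$ into finitely many Borel sets indexed by $v\in\Gamma_{M,N}^d$.
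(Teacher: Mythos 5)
Your proof is correct, and since the paper explicitly leaves this claim to the reader, there is no paper proof to diverge from; your argument — that each $\xi_{p'}^{(M,N)}$ is a Borel function of the single $\xi_{p(p')}$, with $p'\mapsto p(p')$ injective, so independence follows from the standard grouping lemma — is exactly the direct approach the paper's phrasing ("direct to prove and left to the reader") indicates.
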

\begin{claim}\label{limitxi}
    For each $p\in H_N$, $\xi_p^{(M,N)}(\omega)$ converges to $\xi_p(\omega)$ when $M,N\to\infty$.
\end{claim}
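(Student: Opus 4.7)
The plan is to argue pointwise: fix $\omega\in\Omega$ and fix $p\in H$ regarded as an element of $H_N$ for every $N$ large enough that $p$ lies in the fundamental cube $[-N/2,N/2)^d$. Write $v:=\xi_p(\omega)\in\R^d$; this is a single deterministic vector once $\omega$ and $p$ are fixed. The construction of $\xi_p^{(M,N)}(\omega)$ from $v$ that is given just before the claim decomposes naturally into two steps: first, reduce $v$ modulo $N\Z^d$ to obtain $\xi_p^{(N)}(\omega)\in\R^d/N\Z^d$; second, pick the unique element $w^{(M,N)}\in\Gamma_{M,N}^d$ whose $[-\tfrac1{2M},\tfrac1{2M})^d$-neighborhood contains $\xi_p^{(N)}(\omega)$. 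I would dispatch these two steps separately.

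For the reduction step, observe that once $N>2\|v\|_\infty$ we have $v\in[-N/2,N/2)^d$, so $v$ is itself the canonical representative of the class $\xi_p^{(N)}(\omega)$ in the fundamental domain $[-N/2,N/2)^d$ of $\R^d/N\Z^d$. Under the standard identification $\Gamma_{M,N}^d\cong\tfrac1M\Z^d\cap[-N/2,N/2)^d\subset\R^d$ (which is the same identification used throughout Section \ref{ssec:recovnil} to speak of $\Gamma_{M,N}^d$ as a subset of $\R^d/N\Z^d$), the comparison between $\xi_p^{(M,N)}(\omega)$ and $\xi_p(\omega)$ can be read entirely inside $\R^d$ for all such $N$.

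For the rounding step, the defining property $\xi_p^{(N)}(\omega)\in \xi_p^{(M,N)}(\omega)\cdot[-\tfrac1{2M},\tfrac1{2M})^d$ translates, once $N$ is chosen as above, into the Euclidean bound
\[
\bigl\|\xi_p^{(M,N)}(\omega)-v\bigr\|_\infty<\frac{1}{2M}.
\]
Thus letting first $N\to\infty$ (to stabilize the reduction step, for any $N>2\|v\|_\infty$) and then $M\to\infty$ yields $\xi_p^{(M,N)}(\omega)\to v=\xi_p(\omega)$ in $\R^d$, which is the asserted convergence. The order of limits does not matter since the estimate above is uniform in $N$ once $N>2\|\xi_p(\omega)\|_\infty$.

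The only point requiring care, which is really the heart of the bookkeeping, is to make sure that the cube-translation $w\cdot[-\tfrac1{2M},\tfrac1{2M})^d$ appearing in the definition of $\xi_p^{(M,N)}(\omega)$ is interpreted via the additive structure on $\R^d/N\Z^d$ inherited from the Euclidean coordinates used to define $\Gamma_{M,N}^d$, and \emph{not} via the Baker--Campbell--Hausdorff law of $\Gamma$. Once this identification is pinned down, the argument is a finite-precision approximation statement and no analytical obstacle arises; in particular no moment hypothesis on $\xi$ is needed for this pointwise claim.
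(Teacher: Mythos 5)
Your argument is correct: the paper declares this claim ``direct to prove'' and leaves it to the reader, and your two-step verification (stabilization of the $N\Z^d$-reduction once $N>2\|\xi_p(\omega)\|_\infty$, followed by the $\tfrac{1}{2M}$ rounding bound under the identification of $\Gamma_{M,N}^d$ with $\tfrac1M\Z^d\cap[-N/2,N/2)^d$) is exactly the intended elementary proof, including your observation that no moment hypothesis enters this pointwise statement. The interpretive point you flag is harmless either way: even if one reads $v\cdot\left[-\tfrac{1}{2M},\tfrac{1}{2M}\right)^d$ via the polynomial (Baker--Campbell--Hausdorff) group law rather than additively, left translation by the fixed element $\xi_p^{(N)}(\omega)$ is continuous, so the displacement between $\xi_p^{(M,N)}(\omega)$ and $\xi_p^{(N)}(\omega)$ still tends to $0$ as $M\to\infty$, with a constant depending only on $\|\xi_p(\omega)\|$, which suffices for the pointwise claim.
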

\subsection{Discrete-to-continuum limits for irreducible representations in nilpotent Lie groups}\label{ssec: nildisc}

We start by considering a dual element $l\in\mathfrak{g}^*$, with rational coefficients, being the representative of an orbit $\mathcal{O}_l$. Without loss of generality, we write:
\begin{equation}\label{ratdual}
    l=\sum_{j=1}^d \frac{c_j}{m_j}X_j^*\qquad\text{with}\qquad c_j\in\Z\text{ and }\mathsf{gcd}(c_j,m_j)=1\text{ for every }j=1,\ldots,d,
\end{equation}

where some of the coefficients $c_j$ could be zero, and denote by $\mathfrak m=\mathfrak{m}_l$ its polarizing subalgebra; also write $K_l:=\mathsf{exp}(\mathfrak m)$.
\medskip

By \Cref{kirivmeth}, the map $\pi_l:\Gamma\to \mathcal{U}(\mathcal{H}_l)$ given by

\begin{equation}\label{nilrep}
    \pi_{l}:=\mathsf{ind}_{K_l}^{\Gamma}(\chi_{l}),\quad\text{where}\qquad\chi_{l}(\mathsf{exp}(Y))=e^{2\pi i l(Y)},
\end{equation}

is a unitary irreducible representation of $\Gamma$ and all the elements of $\widehat{\Gamma}$ arise in this way; also, we have that $\mathcal{H}_l$ is isomorphic to $L^2(\R^s)$, where $s=\mathsf{dim}(\Gamma/K_l)$. More precisely, the action of $\pi_l$ over $L^2(\R^s)$ is described as follows: since $K_l\subset \Gamma$ are Lie groups and $K_l$ is closed and normal in $\Gamma$, we can consider $\alpha:\Gamma/K_l\cong\R^s\to\Gamma$ be a continuous cross-section for the canonical continuous quotient map $i:\Gamma\to\Gamma/K_l$, like at the end of \Cref{ssec: indrep}. Let $k:\Gamma\to K_l\cong\R^{d-s}$ and $t:\Gamma\to\alpha (\Gamma/K_l)\cong\R^s$ be the continuous functions given by the factorization

\[
x=k(x)t(x).
\]
Then, for $x,y\in\Gamma=(\R^d,\cdot)$ and $f\in\mathcal{H}_l$ we have that

\begin{equation}\label{pilexp}
(\pi_l(x)f)(y)=f(xy)=f(k(xy)t(xy))=\chi_l(k)f(t)=e\left(\sum_{j=1}^{d-s}\frac{c_j k_j}{m_j}\right) f(t),
\end{equation}

with $e(x):=e^{2\pi ix}$,  $t=t(xy)\in\alpha(\Gamma/K_l)$ and where we identify $k=k(xy)\in K_l\subset\R^d$ with the vector $(k_1,\ldots,k_{d-s})$. Thus, after identifying $\mathcal{H}_l\cong L^2(\R^s)$ through the isometry $\mathcal{H}_l\ni f\mapsto f\circ\alpha\in L^2(\R^s)$, we have that $\pi_l$ is an irreducible unitary representation acting over $L^2(\R^s)$, and given by \eqref{pilexp}. 
\medskip

Since $K_l$ is rational, it is not difficult to see that the rescalings $\frac{1}{M}\Z^{d-s}$ and $N\Z^{d-s}$, for $M,N\in\N$, are still lattices in $K_l$. Moreover, since $K_l\cong\R^{d-s}$ is a normal subgroup of $\R^d$ and the law group given by the polynomial coordinates in \eqref{polcoord} is assumed to have integer coefficients, we obtain that $N\Z^{d-s}$ is normal in $\frac{1}{M}\Z^{d-s}$; therefore, it makes sense to consider the groups $\Gamma_{M,N}^d$, $\Gamma_{M,N}^{d-s}$ and $\Gamma_{M,N}^s$ as good approximants, in the Gromov-Hausdorff sense, for $(\R^d,\cdot)$, $K_l\cong\R^{d-s}$ and $\alpha(\R^d/K_l)\cong\R^s$, respectively, for $M,N\gg 1$.
\medskip 

Hence, in the process of approximating $\pi_l$ by a suitable sequence of irreducible unitary representations, we proceed as follows. Fix positive integers $m|M$ and $N\in\N$ and consider
\[c=(c_1,\ldots,c_{d-s})\in(\Z/mM\Z)^{d-s}\qquad\text{such that}\qquad \mathsf{gcd}(c_j,mM)=1\text{ for every }j=1,\ldots,d-s.
\]

Then we define the unitary representation $\pi_{M,N,m,c}$ of $\Gamma_{M,N}^d$ by:

\begin{equation}\label{approxrepnil}
    (\pi_{M,N,m,c}(p)f)(q):=e\left(\sum_{j=1}^{d-s}\frac{c_j k_j(pq)}{m}\right)f(t(pq)),\qquad \text{where }f\in\ell^2\left(\left(\frac{1}{M}\Z/m\Z\right)^s\right),
\end{equation}

where the vectors 
\[k(p)=(k_1(p),\ldots,k_{d-s}(p))\in\frac{1}{M}\Z^{d-s}\qquad \text{and}\qquad t(p)\in\frac{1}{M}\Z^s\] 

in the formula \eqref{approxrepnil}, with $p\in\Gamma_{M,N}^d$, are taken modulo $m\Z^{d-s}$ and $m\Z^s$, respectively.
\medskip

From \eqref{approxrepnil} we get that $\pi_{M,N,m,c}:\Gamma_{M,N}^d\to\mathcal{U}(\ell^2(\Gamma_{M,m}^s))$ is an irreducible unitary representation of $\Gamma_{M,N}^d$. Also observe that, although the map 
\[\Z^{d-s}\times\N\ni (c,m)\mapsto\pi_{M,N,m,c}\in\widehat{\Gamma}_{M,N}^d,\] 

where $m|M$ and the parameter $c$ in the index is taken modulo $mM\Z^{d-s}$, probably is not exhaustive and there are many other irreducible representations of $\Gamma_{M,N}^d$ which are not covered by the formula \eqref{approxrepnil}, the irreducible representations of the form $\pi_{M,N,m,c}$ are sufficient to "approximate" $\pi_l$; we develop this fact with more details in \Cref{nilDFT-FT} below.
\medskip

Following \Cref{ssec: DFT}, consider the Discrete Fourier Transform $\mathcal{F}_{M,N}:\ell^2(\Gamma_{M,N}^d)\to L^2(\widehat{\Gamma}_{M,N}^d)$. In particular, for $f\in\ell^2(\Gamma_{M,N}^d)$, the Fourier transform $\mathcal{F}_{M,N}(f)$ acts over an irreducible representation of the form $\pi_{M,N,m,c}$ like in \eqref{approxrepnil} as follows:

\[\mathcal{F}_{M,N}(f)(\pi_{M,N,m,c})=\frac{1}{M^dN^d}\sum_{p\in\Gamma_{M,N}^d}f(p)\pi_{M,N,m,c}(p):\ell^2\left(\Gamma_{M,m}^s\right)\to\ell^2\left(\Gamma_{M,m}^s\right).\]

To be able to pass information from $\mathcal{F}_{M,N}(\delta_{H_N})$ to $\mathcal{F}(\delta_{X})$ we need to use an appropriate discrete-to-continuum approximation, which we define next.

\begin{remark}[periodization convention]\label{rmk:periodization}
    In order to compare functions on a discrete group $ \Gamma_{M,N}^s$ with functions on $\R^s$, we use the following convention: in the following, given $M\in\mathbb{N}$ and a compactly supported function $\psi\in C_c(\R^s)$ with $\mathsf{supp}(\psi)\subset[-M/2,M/2)^s$, we still denote its $ M\Z^s$-periodization by $\psi$.
\end{remark}
 
\begin{defi}\label{weaknilconv}
    We say that a sequence of linear operators $(T_n)_{n\in\N}$, where $T_n\in\mathcal{B}\left(\ell^2(\Gamma_{M_n,m_n}^s)\right)$ for $n\geq 1$, with $M_n,m_n\to\infty$, converges to a linear operator $T\in\mathcal{B}(L^2(\R^s))$ if for every $\varphi,\psi\in C_c(\R^s)$ the following holds:

    \begin{equation}\label{weaknilconvlim}
        \lim_{n\to\infty}\langle T_n\varphi,\psi\rangle_{\ell^2\left(\Gamma_{M_n,m_n}^s\right)}=\langle T\varphi,\psi\rangle_{L^2(\R^s)},
    \end{equation}

    where for a sufficiently large $n\geq 1$, we identify $\varphi,\psi$ with its $m_n\Z^s$-periodizations like in \Cref{rmk:periodization}.
\end{defi}

Now, for $H_N:=\bigoplus_{i=1}^r N_i\Z/N\Z$, with $N_i|N$ for all $i=1,\ldots,r$, consider the realization set $(H_N)_{\xi^{(M,N)}}=H_{\xi^{(M,N)}}:=\{p\cdot\xi_p^{(M,N)}: p\in H_N\}$. The key claim to prove \Cref{nilrecov} is that the Fourier Transform of $X$ and $X_{\xi}$ are weak limits (in the sense of \Cref{weaknilconv}) of the Discrete Fourier Transforms $\mathcal{F}_{M,N}(\delta_{H})$ and $\mathcal{F}_{M,N}(\delta_{H_{\xi^{(M,N)}}})$, respectively. 

\begin{lemma}\label{nilDFT-FT}
 Let $l=\sum_{j=1}^d\lambda_j X_j^*\in\mathfrak{g}^*$, fix $k\in\N$, and consider the irreducible unitary representation $\pi_l$ acting over $\mathcal{H}_l\cong L^2(\R^s)$ given by the formula \eqref{pilexp}. For any pair of positive integers $m,N$, let
 
 \[\Gamma_{km,N}^d:=\left(\frac{1}{km}\Z/N\Z\right)^d, \qquad H:=\bigoplus_{i=1}^r N_i\Z\subset\R^d,\qquad H_N:=\bigoplus_{i=1}^r N_i\Z/N\Z\subset\Gamma_{km,N}^d.\]
 
 For each $m\geq 1$ let $c_m:=(c_m^{(1)},\ldots,c_m^{(d-s)})\in (\Z/km^2\Z)^{d-s}$ such that $c_m^{(j)}/m\to\lambda_j$ when $m\to\infty$, for every $1\leq j\leq d-s$. Also consider the irreducible representations $\pi_{km,N,m,c_m}$ of $\Gamma_{km,N}^d$ like in \eqref{approxrepnil}. Then the following limit holds:
    \begin{equation}
         \lim_{N\to\infty}\lim_{m\to\infty}(km)^{d-s}N^{d-r}\mathcal{F}_{km,N}(\delta_{H_N})(\pi_{km,N,m,c_m})=\mathcal{F}(\delta_H)(\pi_l)
    \end{equation}
        
where the limit for $m\to\infty$ is taken in the sense of \Cref{weaknilconv}, and the weak limit for $N\to\infty$ is in the classical sense.  
\end{lemma}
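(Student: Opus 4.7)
The strategy is to execute the two limits in the stated order: first send $m\to\infty$ to pass from the finite-group representation $\pi_{km,N,m,c_m}$ acting on $\ell^2(\Gamma_{km,m}^s)$ to the continuous representation $\pi_l$ acting on $L^2(\R^s)$, and then send $N\to\infty$ to turn the finite sum over $H_N$ into the averaged Fourier transform of $H$ on $\R^r$. The first move is to unravel the normalization. Since $|\Gamma_{km,N}^d|=(kmN)^d$, by the definition of the DFT,
\[
(km)^{d-s}N^{d-r}\,\mathcal{F}_{km,N}(\delta_{H_N})(\pi_{km,N,m,c_m})=\frac{1}{(km)^s N^r}\sum_{p\in H_N}\pi_{km,N,m,c_m}(p).
\]
The factor $(km)^{-s}$ matches the grid spacing $(km)^{-1}$ on the inner Hilbert space $\ell^2(\Gamma_{km,m}^s)$, which is what turns sums on $\Gamma_{km,m}^s$ into Lebesgue integrals on $\R^s$; the factor $N^{-r}$ is the Ces\`aro normalization needed for the lattice sum over $H_N\cong H\cap [-N/2,N/2)^r$.

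For the limit $m\to\infty$ at fixed $N$, I test against $\varphi,\psi\in C_c(\R^s)$ periodized modulo $m\Z^s$, which for $m$ exceeding the diameter of $\mathsf{supp}(\varphi)\cup\mathsf{supp}(\psi)$ is simply extension by zero. For each $p\in H_N$, thought of as its canonical representative in $H\cap [-N/2,N/2)^r\subset\Gamma$, formula \eqref{approxrepnil} gives
\[
\frac{1}{(km)^s}\langle\pi_{km,N,m,c_m}(p)\varphi,\psi\rangle_{\ell^2(\Gamma_{km,m}^s)}=\frac{1}{(km)^s}\sum_{q\in\Gamma_{km,m}^s}e\!\left(\sum_{j=1}^{d-s}\frac{c_m^{(j)}k_j(pq)}{m}\right)\varphi(t(pq))\overline{\psi(q)}.
\]
This is a Riemann sum at grid spacing $(km)^{-1}$. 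Since $c_m^{(j)}/m\to\lambda_j$ and the Baker--Campbell--Hausdorff polynomials $k(\cdot)$, $t(\cdot)$ are continuous---and, by rationality of the Mal'cev basis, identical in $\Gamma$ and in its finite quotient for $m,N$ large enough that no toroidal wrap-around affects the support of $\psi$---the integrand converges uniformly on $\mathsf{supp}(\psi)$ to $e(\sum_j\lambda_j k_j(pq))\varphi(t(pq))\overline{\psi(q)}$, which by \eqref{pilexp} is precisely the integrand of $\langle\pi_l(p)\varphi,\psi\rangle_{L^2(\R^s)}$. Standard Riemann-sum convergence delivers the limit for each fixed $p$; summing over the finite set $H_N$ then yields
\[
\lim_{m\to\infty}\left\langle (km)^{d-s}N^{d-r}\mathcal{F}_{km,N}(\delta_{H_N})(\pi_{km,N,m,c_m})\varphi,\psi\right\rangle=\frac{1}{N^r}\sum_{p\in H_N}\langle\pi_l(p)\varphi,\psi\rangle_{L^2(\R^s)}.
\]

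For the outer limit $N\to\infty$, I identify $H_N$ with $H\cap[-N/2,N/2)^r\subset\R^r$ via the canonical system of representatives; combined with the density estimate \eqref{asympdenlat}, the right-hand side above coincides, up to vanishing error, with the matrix element of $\frac{1}{N^r}\sum_{p\in H\cap B_N^r}\pi_l(p)$. By the definition \eqref{FTnil} of the averaged Fourier transform, this operator converges weakly to $\mathcal{F}(\delta_H)(\pi_l)$, completing the chained limit.

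The main technical obstacle is the Riemann-sum step. Beyond pointwise convergence of the phase, I must justify that the finite grid $\Gamma_{km,m}^s$ tested against the periodized $\psi$ behaves as a genuine Riemann partition of a compact set, uniformly as the representation coefficients $c_m^{(j)}$ themselves drift with $m$. A clean route is dominated convergence with the trivial bound $|e(\cdot)|=1$ and $|\varphi(t(pq))\overline{\psi(q)}|\le\|\varphi\|_\infty|\psi(q)|$, using that $q\mapsto t(pq)$ is a polynomial diffeomorphism with locally bounded Jacobian, so that pullbacks of $\varphi$ stay compactly supported uniformly in the finitely many $p\in H_N$. Once this first approximation is secured, the inner finite sum over $H_N$ and the outer Ces\`aro-type limit in $N$ are routine.
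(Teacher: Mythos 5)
Your proof is correct and follows essentially the same route as the paper's: unravel the normalization, interpret the inner pairing on $\ell^2(\Gamma_{km,m}^s)$ as a Riemann sum with mesh $(km)^{-1}$ that converges (uniformly on $\mathsf{supp}\,\psi$, using $c_m^{(j)}/m\to\lambda_j$ and continuity of the Mal'cev coordinate maps $k(\cdot),t(\cdot)$) to the integral $\langle\pi_l(p)\varphi,\psi\rangle_{L^2(\R^s)}$, sum over the finite set $H_N$, and then let $N\to\infty$ and invoke the definition \eqref{FTnil}. The only cosmetic differences are the swap of the dummy names $p\leftrightarrow q$ and that you make the dominated-convergence justification of the Riemann-sum step explicit where the paper simply asserts the limit; the appeal to \eqref{asympdenlat} at the end is harmless but unnecessary, since the $N\to\infty$ limit is just the definition of the averaged Fourier transform.
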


\begin{proof}
     First we will show, in the sense of \Cref{weaknilconv}, that 

    \begin{equation}
     \begin{split}
         \lim_{m\to\infty}(km)^{d-s}N^{d-r}\mathcal{F}_{km,N}(\delta_H)(\pi_{km,N,m,c_m})=\frac{1}{N^r}\mathcal{F}\left(\delta_{H\cap \left[-\frac{N}{2},\frac{N}{2}\right)^r}\right)(\pi_l);
     \end{split}    
    \end{equation} 
     
     then, after taking $N\to\infty$ we will get the conclusion of \Cref{nilDFT-FT}. For this, fix $N\in\N$ and consider two non-zero functions $\varphi,\psi\in\mathcal{H}_l\cong L^2(\R^s)$ supported on $[-N/2,N/2)^s$; recall that by \Cref{rmk:periodization}, its $N\Z^s$-periodizations are still denoted by $\varphi,\psi$. Then by definition of the inner product $\langle\cdot,\cdot\rangle_{\ell^2(\Gamma_{km,m}^s)}$ and after the identifications 
     
     \[H_N\cong \bigoplus_{i=1}^r N_i\Z\cap\left[-\frac{N}{2},\frac{N}{2}\right)^r,\qquad\Gamma_{km,m}^s \cong\frac{1}{km}\Z^s\cap\left[-\frac{m}{2},\frac{m}{2}\right)^s,\] 
     we get that   

    \begin{equation*}
    \begin{split}
        & (km)^{d-s}N^{d-r}\langle\mathcal{F}_{km,N}(\delta_{H_N})(\pi_{km,N,m,c_m})\varphi,\psi\rangle_{\ell^2(\Gamma_{km,m}^s)}\\
        & =\frac{1}{(km)^sN^r}\sum_{p\in\Gamma_{km,m}^s}(\mathcal{F}_{km,N}(\delta_{H})(\pi_{km,m,c_m})\varphi)(p)\overline{\psi(p)}\\
        & =\frac{1}{(km)^sN^r}\sum_{p\in\frac{1}{km}\Z^s\cap\left[-\frac{m}{2},\frac{m}{2}\right)}\left\{\sum_{\substack{q\in\bigoplus_{i=1}^rN_i\Z\\ q\in \left[-\frac{N}{2},\frac{N}{2}\right)^r}}e\left(\sum_{j=1}^{d-s}\frac{c_m^{(j)}k_j(pq)}{m}\right)\varphi(t(pq))\overline{\psi(p)}\right\}\\
        & =\frac{1}{N^r}\sum_{\substack{q\in\bigoplus_{i=1}^r N_i\Z\\ q\in \left[-\frac{N}{2},\frac{N}{2}\right)^r}}\frac{1}{(km)^s}\left\{\sum_{p\in\frac{1}{km}\Z^s\cap\left[-\frac{m}{2},\frac{m}{2}\right)}e\left(\sum_{j=1}^{d-s}\frac{c_m^{(j)}k_j(pq)}{m}\right)\varphi(t(pq))\overline{\psi(p)}\right\}\\
        & \longrightarrow\frac{1}{N^r}\sum_{\substack{q\in\bigoplus_{i=1}^r N_i\Z\\ q\in \left[-\frac{N}{2},\frac{N}{2}\right)^r}}\int_{\left[-\frac{N}{2},\frac{N}{2}\right)^s}e\left(\sum_{j=1}^{d-s}\lambda_j k_j(xq)\right)\varphi(t(xq))\overline{\psi(x)}dx\\
        & =\frac{1}{N^r}\left\langle\mathcal{F}\left(\delta_{H\cap\left[-\frac{N}{2},\frac{N}{2}\right)^r}\right)(\pi_l)\varphi,\psi\right\rangle_{L^2(\R^s)},
    \end{split}
    \end{equation*}

    where the limit taken for $m\to\infty$ in the sense of \Cref{weaknilconv}. Hence the the conclusion of \Cref{nilDFT-FT} follows by the definition of $\pi_l$ and $\mathcal{F}(\delta_{H})$ after considering $\varphi,\psi\in L^2(\R^s)$, testing against $\varphi_N,\psi_N\in C_c(\R^s)$ being the $N\Z^s$-periodizations of $\varphi,\psi$ restricted to $\left[-\frac{N}{2},\frac{N}{2}\right)^s$, respectively, and taking the limit when $N$ goes to $\infty$. 
\end{proof}


\begin{lemma}\label{lemmalimxiDFT-nil}
    Under the assumptions of \Cref{nilDFT-FT}, suppose there exists a positive number $\varepsilon$ such that $\mathbb{E}[|\mathsf{proj}_r(\xi)|^{r+\varepsilon}]<\infty$.  Then the following limit holds almost surely :
    \begin{equation}
    \begin{split}
        \lim_{N\to\infty}\lim_{m\to\infty}(km)^{d-s}N^{d-r}\mathcal F_{km,N}\left(\delta_{H_{\xi^{(km,N)}}}\right)(\pi_{km,N,m,c_m})&=\mathcal{F}(\delta_{X_{\xi}})(\pi_l).
    \end{split}
    \end{equation}
\end{lemma}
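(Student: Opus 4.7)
The plan is to mirror the proof of \Cref{nilDFT-FT}, handling the randomness through \Cref{limitxi} (for the $m \to \infty$ limit) and a Borel--Cantelli argument based on the moment hypothesis (for the $N \to \infty$ limit).

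First, fix test functions $\varphi, \psi \in C_c(\mathbb{R}^s)$ supported on $[-N/2, N/2)^s$ and periodized by $N\mathbb{Z}^s$ in the sense of \Cref{rmk:periodization}. I would expand
\begin{equation*}
(km)^{d-s} N^{d-r} \langle \mathcal F_{km,N}(\delta_{H_{\xi^{(km,N)}}})(\pi_{km,N,m,c_m}) \varphi, \psi \rangle_{\ell^2(\Gamma_{km,m}^s)}
\end{equation*}
using \eqref{approxrepnil}, obtaining the same double sum as in \Cref{nilDFT-FT} but with each $q \in H_N$ replaced by the random element $q \cdot \xi_q^{(km,N)} \in \Gamma_{km,N}^d$.

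Second, since $H_N$ is finite, I would take $m \to \infty$ summand by summand. By \Cref{limitxi}, $\xi_q^{(km,N)}(\omega) \to \xi_q(\omega) \pmod{N\mathbb{Z}^d}$ almost surely for each $q$, and the same Riemann-sum-to-integral convergence used in \Cref{nilDFT-FT} then yields, almost surely,
\begin{equation*}
\lim_{m\to\infty} (km)^{d-s} N^{d-r} \langle \cdots \rangle \;=\; \frac{1}{N^r} \sum_{q \in H \cap [-N/2,N/2)^r} \langle \pi_l(q \xi_q) \varphi, \psi \rangle_{L^2(\mathbb{R}^s)},
\end{equation*}
where $\xi_q$ is interpreted modulo $N\mathbb{Z}^d$.

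Third, I would take $N \to \infty$ almost surely. This is where the moment hypothesis enters: by Markov's inequality and the ball growth $|H \cap B_R^r| = O(R^r)$,
\begin{equation*}
\sum_{q \in H \setminus \{0\}} \mathbb{P}\bigl( |\mathsf{proj}_r(\xi_q)| > |q|/2 \bigr) \;\lesssim\; \mathbb{E}\bigl[|\mathsf{proj}_r(\xi)|^{r+\varepsilon}\bigr] \sum_{q \in H \setminus \{0\}} |q|^{-(r+\varepsilon)} \;<\; \infty,
\end{equation*}
so Borel--Cantelli gives that almost surely only finitely many $q \in H$ satisfy $|\mathsf{proj}_r(\xi_q)| > |q|/2$. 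Consequently, for all sufficiently large $N$, the perturbed set $\{q \xi_q : q \in H \cap [-N/2, N/2)^r\}$ coincides with $H_\xi \cap [-N/2, N/2)^d$ up to a boundary layer of cardinality $o(N^r)$, by combining the previous control with \eqref{asympdenlat}, and the $N\mathbb{Z}^d$-periodization of $\xi_q$ becomes irrelevant. Dividing by $N^r$ and letting $N \to \infty$ then produces $\langle \mathcal F(\delta_{X_\xi})(\pi_l) \varphi, \psi \rangle_{L^2(\mathbb{R}^s)}$ by definition of the averaged Fourier transform, and arbitrariness of $\varphi, \psi$ completes the argument in the sense of \Cref{weaknilconv}.

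The main obstacle is the $N \to \infty$ step: simultaneously justifying the removal of the periodization on the random perturbations and the asymptotic equivalence between summing over $q \in H \cap B_N^r$ versus summing over $y \in H_\xi \cap B_N^d$. Both are controlled by the moment condition, with the exponent $r+\varepsilon$ precisely what is needed to make the Borel--Cantelli series convergent.
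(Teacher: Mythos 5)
Your proof follows the same broad architecture as the paper's (first pass $m\to\infty$ via \Cref{limitxi} and Riemann-sum convergence; then pass $N\to\infty$ using the moment hypothesis to control the mass that escapes or enters the box), and the $m\to\infty$ step is fine. The paper, however, handles the $N\to\infty$ step by introducing the sets $A_N$ and $A_N'$ and appealing to Lemma~3.1 of~\cite{Yakir} (adapted to the nilpotent setting) to conclude $|A_N|/N^r, |A_N'|/N^r \to 0$ almost surely; whereas you propose a direct per-point Borel--Cantelli with threshold $|q|/2$.

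This is where there is a genuine gap. The Borel--Cantelli series you write down does converge, and it does give that almost surely all but finitely many $q\in H$ satisfy $|\mathsf{proj}_r(\xi_q)|\le |q|/2$. But this threshold grows \emph{linearly} in $|q|$: for $q\in H\cap[-N/2,N/2)^r$ the resulting bound is $|\mathsf{proj}_r(\xi_q)|=O(N)$, not $o(N)$. Consequently the layer of $q$'s whose perturbed image $q\cdot\xi_q$ could exit the box $[-N/2,N/2)^d$ has width comparable to $N$, and hence cardinality $\Theta(N^r)$ rather than $o(N^r)$. The asymptotic-density estimate \eqref{asympdenlat} does not repair this, since it concerns the density of $H$ itself, not the size of the boundary layer generated by the perturbations. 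The same objection undermines the claim that the $N\Z^d$-periodization of $\xi_q$ becomes irrelevant: that requires $|\xi_q|=o(N)$ uniformly over $q$ in the box, which your threshold does not deliver. The idea can be salvaged by choosing a \emph{sublinear} threshold $|q|^\alpha$ with $r/(r+\varepsilon)<\alpha<1$: then $\sum_q \mathbb P(|\mathsf{proj}_r(\xi_q)|>|q|^\alpha)\lesssim \sum_q|q|^{-\alpha(r+\varepsilon)}<\infty$ still holds, while now $|q|^\alpha=O(N^\alpha)=o(N)$ on the box, which gives the desired $o(N^r)$ boundary layer and kills the periodization issue. Alternatively, one follows the paper and invokes Yakir's Lemma~3.1, whose proof uses a different decomposition (perturbations above/below $\varepsilon N$, with the ``large'' part controlled in expectation and upgraded to almost-sure along a dyadic subsequence), a decomposition that does not reduce to a per-point Borel--Cantelli at a fixed multiple of $|q|$.
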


\begin{proof}
Firstly consider the sets $A_N,A_N'\subset X$ given by 
    \begin{equation*}
    \begin{split}
        A_{N}&=\left\{p\in X\cap [-N/2,N/2)_{i=1}^r:\ p\cdot\xi_{p}\not\in [-N/2,N/2)_{i=1}^d \right\}\\
        A_{N}'&=\left\{p\in X\setminus[-N/2,N/2)_{i=1}^r:\ p\cdot\xi_{p}\in [-N/2,N/2)_{i=1}^d \right\}.
    \end{split}
    \end{equation*}
    Since $\mathbb E[|\mathsf{proj}_r(\xi)|^{r+\varepsilon}]<\infty$, by the same reasoning as Lemma 3.1 in \cite{Yakir} the following limits hold almost surely:
    \begin{equation}\label{escapemass}
        \lim_{N\to\infty}\frac{|A_N|}{N^r}=\lim_{N\to\infty}\frac{| A_N'|}{N^r}=0.
    \end{equation}
    
 Like in the limit in \Cref{nilDFT-FT}, given a positive integer $N$ and $\varphi,\psi\in C_c(\mathbb{R}^s)$ we need to estimate the value of: 
    \begin{equation}\label{2ndlimitDFT}
    \begin{split}
        &\frac{1}{N^r}\left|\left\langle\mathcal{F}\left(\delta_{H_{\xi}\cap\left[-\frac{N}{2},\frac{N}{2}\right)^d}\right)(\pi_l)\varphi,\psi\right\rangle_{L^2(\mathbb{R}^s)}-\frac{1}{(km)^s}\langle\mathcal{F}_{km,N}(\delta_{H_{\xi^{(km,N)}}})\varphi,\psi\rangle_{\ell^2\left(\Gamma_{km,m}^s\right)}\right|\\
        &=\frac{1}{N^r}\left|\int_{\R^s}\sum_{p\in H_{\xi}\cap\left[-\frac{N}{2},\frac{N}{2}\right)^d}e\left(\sum_{j=1}^{d-s}\lambda_j k_j\right)\varphi(t)\overline{\psi(x)}dx-\sum_{q\in \Gamma_{km,m}^s}\sum_{p\in H_{\xi^{(km,N)}}}e\left(\sum_{j=1}^{d-s}\frac{c_m^{(j)} k_j}{m}\right)\varphi(t)\frac{\overline{\psi(q)}}{(km)^s}\right|,
    \end{split}    
    \end{equation}
    where in the second line of \eqref{2ndlimitDFT}, for the first term we have that $k=k(px)=(k_1,\ldots,k_{d-s}), t=t(px), p\in H_{\xi}\cap\left[-\frac{N}{2},\frac{N}{2}\right)^d$, and for the second term $k=k(pq), t=t(pq), p\in H_{\xi^{(km,N)}}$. 
    \medskip
    
    Now observe that 
    
    \begin{equation}\label{FZxidecomp}
    \begin{split}
        &\int_{\R^s}\sum_{p\in H_{\xi}\cap\left[-\frac{N}{2},\frac{N}{2}\right)^d}e\left(\sum_{j=1}^{d-s}\lambda_j k_j(px)\right)\varphi(t(px))\overline{\psi(x)}dx\\
        &=\int_{\R^s}\sum_{p\in H\cap\left[-\frac{N}{2},\frac{N}{2}\right)^d}e\left(\sum_{j=1}^{d-s}\lambda_j k_j(p\cdot\xi_p\cdot x)\right)\varphi(t(p\cdot\xi_p \cdot x))\overline{\psi(x)}dx+S_N-S_N',
    \end{split}
    \end{equation}
    where $S_N$ and $S_N'$ are defined as the integral in the right-hand side in \eqref{FZxidecomp}, but with $p$ running over $A_N$ and $A_N'$, respectively.
    \medskip
    
    Since the function $e(\cdot)=e^{-2\pi i(\cdot)}$ is bounded, and $\varphi,\psi$ are compactly supported, then from \eqref{escapemass} we have that almost surely $\frac{S_N}{N^r},\frac{S_N'}{N^r}\to 0$ when $N$ goes to $\infty$.
    Then from \Cref{limitxi}, \eqref{FZxidecomp}, \eqref{escapemass}, and by the triangle inequality, given $\varepsilon>0$ there are $M_0,N_0\in\mathbb{N}$ such that for every $M\geq M_0, N\geq N_0$ we get that
    \begin{equation*}\label{DFT-FT-xi}
    \begin{split}
        \eqref{2ndlimitDFT}
        &\leq\frac{1}{N^r}\sum_{p\in H\cap\left[-\frac{N}{2},\frac{N}{2}\right)^r}\left|\int_{\left[-\frac{N}{2},\frac{N}{2}\right)^s}e\left(\sum_{j=1}^{d-s}\lambda_j k(p\cdot\xi_p x)\right)\varphi(p\cdot\xi_p\cdot x)\overline{\psi(x)}dx\right.
        \\
        &-\left.\sum_{\substack{q\in\frac{1}{km}\Z^s\\ q\in\left[-\frac{m}{2},\frac{m}{2}\right)^s}}e\left(\sum_{j=1}^{d-s}\frac{c_m^{(j)} k_j(p\cdot\xi_p q)}{m}\right)\varphi( p\cdot \xi_p^{(km,N)}\cdot q)\frac{\overline{\psi(q)}}{(km)^s}\right|+\frac{|S_N|}{N^r}+\frac{|S_N'|}{N^r}\\
        &\leq\frac{1}{N^r}\frac{\varepsilon N^r}{3}+\frac{\varepsilon}{3}+\frac{\varepsilon}{3}=\varepsilon.
    \end{split}
    \end{equation*}
    This concludes the proof of \Cref{lemmalimxiDFT-nil}.
\end{proof}

The final step to prove \Cref{recovnil} is given by the next quantitative recovery of the discrete Fourier spectrum, which is a direct consequence of \Cref{basicthmH}.

\begin{prop}\label{corobasic-nil}
    Let $k\in N$ be fixed, $N,m\in\mathbb{N}$ and consider the finite groups $\Gamma_{km,N}^d$ and $H_{N}$ as before. Let $\xi_p^{(km,N)},p\in H_N$ and $\varepsilon>0$ satisfying the hypotheses of \Cref{basicthmH}. Then with probability at least $1-\frac{C}{\varepsilon^2|H_N|}$, for some suitable positive constant $C=C(H)$ there holds:
    \begin{equation}\label{asbasic-nil}
        \left\|\mathcal{F}_{km,N}(\delta_{H_{\xi^{(km,N)}}})-\mathcal{F}_{km,N}(\delta_{H})\mathbb{E}(\xi^{(km,N)}(\cdot))\right\|^2_{L^2(\widehat{\Gamma}_{km,N}^d)}\leq\frac{1}{(km)^{2d}N^{2d-4r}}\left(\varepsilon+\frac{4C^2}{N^r}\right)\max_{\pi\in\widehat{\Gamma}_{km,N}^d}d_{\pi}.  
    \end{equation}
\end{prop}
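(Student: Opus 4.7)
The plan is to apply \Cref{basicthmH} directly, with $\Gamma = \Gamma_{km,N}^d$ in the role of the ambient finite group and $H_N$ in the role of the normal subgroup $H$, then read off the claimed bound by substituting the relevant cardinalities. Concretely, I first record that the i.i.d.\ family $\{\xi_p^{(km,N)}\}_{p\in H_N}$ of $\Gamma_{km,N}^d$-valued random variables furnished in \Cref{ximnnil} satisfies the independence hypothesis by \Cref{disxi1}, and that $\sup_{\pi\in\widehat{\Gamma}_{km,N}^d}\mathsf{Var}[\pi(\xi^{(km,N)})]<\infty$ automatically, since $\Gamma_{km,N}^d$ is finite and every $\pi(\xi^{(km,N)})$ takes values in a bounded set of matrices. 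Thus the hypotheses of \Cref{basicthmH} apply, modulo two points I turn to next.

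Next, I verify the normality of $H_N$ inside $\Gamma_{km,N}^d$. Since $H$ is a lattice in the sense that $\exp^{-1}(H)$ is an additive subgroup of $\mathfrak{g}$, by the rationality of the Mal'cev basis and the fact that the law in polynomial coordinates \eqref{polcoord} has integer coefficients, conjugation by an element of $\tfrac{1}{km}\Z^d$ preserves $\bigoplus_{i=1}^r N_i\Z$ modulo $N\Z^d$, provided we choose $N$ divisible by sufficiently many factors coming from the structure constants; this is precisely the same device used in \Cref{ssec: nildisc} to show that $N\Z^{d-s}$ is normal in $\tfrac{1}{M}\Z^{d-s}$. I then compute the two cardinalities relevant for substituting into \eqref{mainerrorH}:
\begin{equation*}
|\Gamma_{km,N}^d| = (kmN)^d,\qquad |H_N| = \prod_{i=1}^r \tfrac{N}{N_i} = c_H\, N^r,\quad c_H := \prod_{i=1}^r N_i^{-1}.
\end{equation*}
Hence $|H_N|^4/|\Gamma_{km,N}^d|^2 = c_H^4\,(km)^{-2d} N^{-(2d-4r)}$ and $1/|H_N| = c_H^{-1} N^{-r}$.

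Substituting these into the conclusion \eqref{mainerrorH} of \Cref{basicthmH} yields, with probability at least $1 - 12 C_{\xi,\Gamma}^2 L /(\varepsilon^2 |H_N|) = 1-C'/(\varepsilon^2 |H_N|)$ for $C' = 12 C_{\xi,\Gamma}^2 L$, the inequality
\begin{equation*}
\left\|\mathcal{F}_{km,N}(\delta_{H_{\xi^{(km,N)}}}) - \mathcal{F}_{km,N}(\delta_{H_N})\,\mathbb{E}[\xi^{(km,N)}(\cdot)]\right\|_{L^2(\widehat{\Gamma}_{km,N}^d)}^2 \le \frac{c_H^4}{(km)^{2d} N^{2d-4r}}\left(\varepsilon + \frac{4 C_{\xi,\Gamma}^2}{c_H N^r}\right)\max_{\pi}d_\pi,
\end{equation*}
which is \eqref{asbasic-nil} after absorbing $c_H$ and $c_H^{-1}$ into a single constant $C = C(H)$, and noting that on a finite group of cardinality $N^r$, the size hypothesis $|H_N| \ge 6L/(L-3)$ holds for all $N$ large enough.

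The only nonroutine point is the normality verification; I expect it to be the main obstacle, since in the nonabelian setting one has to track that the integer-coefficient polynomial law actually preserves the sublattice $\bigoplus_i N_i \Z$ modulo $N\Z^d$ for all suitably divisible $N$. Once this is granted, the proposition reduces to the bookkeeping above.
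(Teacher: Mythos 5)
Your cardinality substitutions, and the absorption of $c_H$ and its powers into $C(H)$, match the paper's own proof, which is the one-liner "This follows directly from Theorem~\ref{basicthmH} and from the fact that $|H_N|=\frac{N^r}{N_1\ldots N_r}$ and $|\Gamma_{km,N}^d|=(km)^dN^d$." On the bookkeeping side your reconstruction is correct and the algebra (including the observation that $c_H\le 1$ so that $c_H^4$ and $c_H^3$ can be dropped) is sound.

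The genuine issue is exactly the one you flag, namely normality of $H_N$ in $\Gamma_{km,N}^d$, but your proposed fix does not close it. The passage from \Cref{ssec: nildisc} that you cite establishes normality of $N\Z^{d-s}$ in $\frac{1}{M}\Z^{d-s}$ \emph{inside the polarizing subgroup} $K_l$, which is itself normal in $\Gamma$; that is a different pair of groups, and the argument does not transport to the pair $(H_N,\Gamma_{km,N}^d)$. Moreover normality can genuinely fail and is not recovered by divisibility of $N$: in the polarized Heisenberg group, conjugating $(x,y,z)\in\Gamma(\Z/N\Z)$ by $(u,v,w)\in\Gamma\left(\left(\tfrac{1}{M}\Z\right)/N\Z\right)$ gives $(x,y,z+uy-xv)$, and for, say, $(x,y,z)=(0,1,0)$ and $u=1/M$ one gets $(0,1,1/M)\notin H_N$ whenever $M>1$, regardless of $N$. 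Theorem~\ref{basicthmH} uses normality essentially (via Clifford's theorem in \eqref{upperbound1} and via Lemma~\ref{FTdeltaH}), so the applicability of that theorem to $(H_N,\Gamma_{km,N}^d)$ needs a justification that neither your sketch nor the paper's proof supplies. You were right to single this out as the nonroutine step; the sketch you offer to handle it would not survive scrutiny as written.
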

\begin{proof}
    This follows directly from \Cref{basicthmH} and from the fact that $|H_N|=\frac{N^r}{N_1\ldots N_r}$ and $|\Gamma_{km,N}^d|=(km)^dN^d$.
\end{proof}

To prove \Cref{nilrecov}, we need a version of \eqref{asbasic-nil} in the sense of \Cref{weaknilconv}, namely
\begin{equation}\label{FTasconv}
    M^{d-s}N^{d-r}\left(\mathcal{F}_{M,N}(\delta_{H_{\xi^{(M,N)}}})(\pi_{M,N,m,c})-\mathcal{F}_{M,N}(\delta_{H})\mathbb{E}\left(\pi_{M,N,m,c}\left(\xi^{(M,N)}\right)\right)\right)\longrightarrow 0,
\end{equation}
where $\pi_{M,N,m,c}$ is the finite dimensional representation having the form of \eqref{approxrepnil}. This follows from \eqref{basicthmHpw}, since for $\varepsilon>\left(\frac{48 C^2}{N^r}\right)^{1/2}$, with probability at least $1-\frac{48 C^2}{\varepsilon^2N^r}$ the following error bound holds for every $\pi\in\widehat{\Gamma}_{km,N}^d$: 

\begin{equation}\label{dualnorm-l2}
    \left\|\left(\mathcal{F}_{km,N}(\delta_{H_{\xi^{(km,N)}}})(\pi)-\mathcal{F}_{km,N}(\delta_H)(\pi)\mathbb{E}(\pi(\xi^{(km,N)}))\right)e_i^{(\pi)}\right\|_{\Gamma_{km,N}^s}^2\leq \frac{N^{4r}}{(km)^{2d}N^{2d}}\left(\varepsilon+\frac{4C^2}{N^r}\right)d_{\pi}^2,
\end{equation}

where $(e_i^{(\pi)})_{i=1}^{d_{\pi}}$ is an orthonormal basis for $V_{\pi}$. Then after multiplying \eqref{dualnorm-l2} by $M_m^{(d-s)/2}N^{(d-r)/2}$ with $M_m=km,\ \pi=\pi_{km,N,n,c}$ and $d_\pi=(km)^sm^s$ as before it follows that:

\begin{equation}\label{asest2}
    \begin{split}
        &\left\|\left((km)^{d-s}N^{d-r}\mathcal{F}_{km,N}(\delta_{H_{\xi^{(km,N)}}})(\pi)-(km)^{d-s}N^{d-r}\mathcal{F}_{km,N}(\delta_H)(\pi)\mathbb{E}(\pi(\xi^{(km,N)}))\right)e_i^{(\pi)}\right\|_{\Gamma_{km,N}^s}^2\\
        &\leq\frac{1}{k^{\frac{3d}{2}-s}N^{\frac{3d-9r}{2}}}\cdot\frac{1}{m^{\frac{3d}{2}-3s}}\left(\varepsilon+\frac{4C^2}{N^r}\right).
    \end{split}
\end{equation}

\begin{proof}[Proof of \Cref{nilrecov}]
    On the one hand, from \Cref{nilDFT-FT} and \Cref{lemmalimxiDFT-nil}, we have the following limit holds in the sense of \Cref{weaknilconv}:
    \begin{equation}
        \begin{split}\label{limitproof1}
            &\lim_{m\to\infty}(km)^{d-s}N^{d-r}(\mathcal{F}_{km,N}(\delta_{H_{\xi^{(km,N)}}})(\pi_{km,N,m,c_m})-\mathcal{F}_{km,N}(\delta_H)(\pi_{km,N,m,c_m})\mathbb{E}(\pi_{km,N,m,c}(\xi^{(km,N)})))\\
            &=\frac{1}{N^r}\mathcal{F}\left(\delta_{H_{\xi}\cap\left[-\frac{N}{2},\frac{N}{2}\right)^d}\right)(\pi_l)-\frac{1}{N^r}\mathcal{F}\left(\delta_{H\cap\left[-\frac{N}{2},\frac{N}{2}\right)^d}\right)(\pi_l)\mathbb{E}(\pi_l(\xi)).
        \end{split}
    \end{equation}

On the other hand, from \eqref{asest2} and \Cref{disxi1}, for $\varepsilon>\left(\frac{48C^2}{N^r}\right)^{1/2}$, with probability at least $1-\frac{48C^2}{\varepsilon^2 N^r}$ we have that 

\begin{equation}\label{limitproof2}
\begin{split}
    (km)^{d-s}N^{d-r}(\mathcal{F}_{km,N}(\delta_{H_{\xi^{(km,N)}}})(\pi_{km,N,m,c})-\mathcal{F}_{km,N}(\delta_H)(\pi_{km,N,m,c})\mathbb{E}(\pi_{km,N,m,c}(\xi^{(km,N)})))\to 0,
\end{split} 
\end{equation}

where again the limit for $m\to\infty$ is taken in the sense of \Cref{weaknilconv}, and we use \Cref{lemma:dimirrednil} in \eqref{asest2} to ensure that $3d/2-3s>0$. Therefore, by taking $\varepsilon=\varepsilon_N\to 0$ such that $1/|H_N|\ll\varepsilon_N$ and putting together \eqref{limitproof1},\eqref{limitproof2}, there holds that almost surely

\[
\lim_{N\to\infty}\frac{1}{N^r}\mathcal{F}\left(\delta_{H_{\xi}\cap\left[-\frac{N}{2},\frac{N}{2}\right)^d}\right)(\pi_l)=\left(\lim_{N\to\infty}\frac{1}{N^r}\mathcal{F}\left(\delta_{H\cap\left[-\frac{N}{2},\frac{N}{2}\right)^d}\right)(\pi_l)\right)\mathbb{E}(\pi_l(\xi)),
\]

where the limits are taken like in \eqref{FTnil}, and thus the conclusion of \Cref{nilrecov} follows.
\end{proof}
\section{Examples}\label{sec: examples}

In this section, we illustrate with some examples the quantitative and almost sure recovery given by \Cref{basicthmH} and \Cref{nilrecov}. We mainly focus on the two canonical instances of abelian and non-abelian groups, namely, the Euclidean space and the Heisenberg group, where some explicit error bounds are provided. In addition, in order to clarify the computations in the setting of general finite groups, we decide to incorporate the corresponding ones for the Euclidean case.

\subsection{Finite abelian groups}\label{ssection: setting} 

We consider a discrete torus $\mathbb T_N^d:=(\mathbb Z/N\mathbb Z)^d$ where $N$ is not a prime number, and its subset $H=(M_1 \mathbb Z/ N\mathbb Z)\oplus\ldots\oplus(M_r\mathbb{Z}/N\mathbb{Z})$ for $r\leq d$ and for some divisor $M_i|N$ with $1\leq i\leq r$. The general case for the study in an ambient group $(\mathbb Z/ N_1\mathbb Z)\oplus\ldots\oplus(\mathbb{Z}/N_d\mathbb{Z})$ instead $\mathbb{T}_N^d$ follows analogously; we restrict ourself to $\mathbb{T}_N^d$ only to simplify notations and computations. We denote by $\mathcal F:\ell^2(\mathbb T_N^d)\to \ell^2(\mathbb T_N^d)$ the Discrete Fourier Transform (see \cite[Ch. 2]{terras} for a treatment of the $d=1$ case, and the case of general $d$ is an easy extension), given by
\begin{equation}\label{dftdef}
 \mathcal{F}(f)(k)=\widehat f(k):=\frac{1}{N^d}\sum_{p\in \mathbb T_N^d} e(p,k) f(p),\quad\text{where}\quad e(p,k)=e_N(p,k):=e^{\frac{-2\pi}Ni \langle p, k\rangle}.
\end{equation}
%

\medskip

We apply \eqref{dftdef} to the characteristic function
\[
    \delta_H(p):=\left\{
        \begin{array}{ll}
            1&\text{ if }p\in H,\\[2mm]
            0&\text{ else,}
        \end{array}\right.
\]
and then we see the characteristic function of the dual set $H^*$ of $H$ appears. Precisely, we have
\begin{equation*}
\begin{split}
    \widehat\delta_H=\frac{1}{N^{d-r}(M_1\ldots M_r)}\delta_{H^*},\quad\text{with }\quad H^*
    &:=\{k\in \mathbb T_N^d:\ \forall p\in H,\ k\cdot p\equiv 0\ (\text{mod }N)\}\\
    &=\bigoplus_{i=1}^r\left(\frac{N}{M_i}\mathbb Z/ N\mathbb Z\right)\oplus(\mathbb{Z}/N\mathbb{Z})^{d-r}.
 \end{split}
\end{equation*}
Note that $|H|=\frac{N^r}{(M_1\ldots M_r)}$ and $|H^*|=(M_1\ldots M_r)N^{d-r}$.

\medskip

We then consider a random perturbation field of a configuration $H$ as above, namely we fix a probability space $(\Omega, \mathcal T, \mathbb P)$ and define a collection of $\mathbb T_N^d$-valued random variables $\xi=(\xi_p)_{p\in H}$, and the $\xi$-perturbed version of $H$ will be denoted by $H_\xi=H_\xi(\omega):=\{p+\xi_p(\omega):\ p\in H\}$, which is a random subset of $\mathbb T_N^d$, with $\omega\in\Omega$. We here consider that $\xi_p, p\in H$ are i.i.d. random variables. 
\medskip

The main quantitative recovery result in the framework of finite groups given by \Cref{basicthmH} reads as follows.

\begin{thm}\label{basicthm}
Let $\xi_p, p\in H$ be i.i.d. random variables with $\xi_p\sim\xi$ and $C:=\sup_{\lambda\in \mathbb T_N^d}\mathsf{Var}[e(\xi,\lambda)]<\infty$, and assume that $\varepsilon>\sqrt{\frac{72(M_1\ldots M_r)}{N^r}}$ and $\frac{N^r}{M_1\ldots M_r}>12$. Then with probability at least $1-\frac{72(M_1\ldots M_r)}{\varepsilon^2 N^r}$ we have
\begin{equation}\label{mainerror}
    \max_{\lambda\in \mathbb T_N^d}\left|\widehat{\delta_{H_\xi}}(\lambda) - \mathbb E\left[e(\xi,\lambda)\right]\ \widehat{\delta_H}(\lambda)\right|\le \frac{ 1}{|H^*|}\left(\varepsilon+\frac{4C^2}{|H|}\right)^{\frac{1}{2}}.
\end{equation}
\end{thm}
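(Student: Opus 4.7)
The plan is to deduce \Cref{basicthm} by specialising the general non-abelian bound \Cref{basicthmH}, and in particular its pointwise version \Cref{rmk: basicthmHsubset}, to the abelian setting $\Gamma=\mathbb T_N^d$. Three abelian-specific simplifications are in force simultaneously: every irreducible representation is a one-dimensional character $\chi_\lambda(p)=e(p,\lambda)$, so $d_\pi=1$ for every $\pi$; every subgroup is automatically normal; and \Cref{FTdeltaH} (or a direct geometric sum) yields the explicit formula $\widehat{\delta_H}=|H^*|^{-1}\mathbbm{1}_{H^*}$ recalled before the theorem. The standing hypotheses of \Cref{basicthmH} become very clean in this setting: the variance $C=\sup_\lambda\mathsf{Var}[e(\xi,\lambda)]$ is automatically at most $1$ since the characters take values on the unit circle; the cardinality condition $|H|\geq 6L/(L-3)$ of \Cref{parti3nonabel} with $L=6$ reduces to $|H|\geq 12$, which is implied by $\varepsilon^2>72/|H|$; and the threshold $\varepsilon>(12 C^2 L/|H|)^{1/2}$ of \Cref{basicthmH} matches our hypothesis $\varepsilon>\sqrt{72(M_1\dots M_r)/N^r}$ after the substitution $L=6$, using $|H|=N^r/(M_1\dots M_r)$.

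I would then apply \Cref{rmk: basicthmHsubset} with the singleton $A=\{\chi_\lambda\}$, for each $\lambda\in\mathbb T_N^d$. With $d_{\chi_\lambda}=1$, the bound \eqref{basicthmHpw} specialises to a pointwise estimate for $|\widehat{\delta_{H_\xi}}(\lambda)-\mathbb E[e(\xi,\lambda)]\widehat{\delta_H}(\lambda)|^2$. The crucial observation, already built into the proof of \Cref{basicthmH}, is that this estimate is obtained on one single probability event of probability at least $1-12 C^2 L/(\varepsilon^2|H|)$; hence the bound is valid for \emph{every} $\lambda$ on the same event, and taking $\max_\lambda$ does not introduce any additional union bound factor. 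With $L=6$, $C\leq 1$ and $|H|=N^r/(M_1\dots M_r)$, the failure probability collapses to the stated $72(M_1\dots M_r)/(\varepsilon^2 N^r)$.

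The main obstacle is tracking the exact numerical constants in the abelian specialisation. The general bound \eqref{basicthmHpw} was derived via the crude estimate $\sum_\rho d_\rho m(\pi_1,\rho|_H)\leq|\Gamma|^{1/2}d_{\pi_1}$ appearing in \eqref{innersumest}, which is very far from tight when $\Gamma$ is abelian: here \Cref{suppFil} gives the exact support of $\widehat{F_{i\ell}}$ as the coset $\lambda+H^*$ of size $|H^*|$, and $\sum_\rho d_\rho m(\chi_\lambda|_H,\chi_\mu|_H)$ equals $|H^*|$ since only the characters $\chi_\mu$ with $\mu\in\lambda+H^*$ contribute (each with multiplicity one). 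Substituting this sharp count into \eqref{Fil}, and combining the resulting identity $\|\widehat{F_{i\ell}}\|^2_{L^2(\widehat\Gamma)}=(|H||H^*|^2)^{-1}$ with the autocorrelation estimate of \Cref{coronabelglj2} via \eqref{nonabelest}, is what reduces the prefactor in \eqref{basicthmHpw} down to the claimed $1/|H^*|$ and produces \eqref{mainerror} after simplification using $|H|\cdot|H^*|=N^d$. No fundamentally new argument is needed beyond \Cref{basicthmH} itself, but the combinatorial identities must be substituted carefully at every step to propagate the improved factors into the final estimate.
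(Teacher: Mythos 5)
Your route — specialising the general non-abelian bound and the pointwise \Cref{rmk: basicthmHsubset} to $\Gamma=\mathbb T_N^d$ — is genuinely different from the paper's proof, which is a self-contained abelian argument running through the autocorrelation $\gamma_\lambda$, \Cref{variance}, \eqref{hatfhatgerror} and \eqref{Ghatbound} without ever invoking \Cref{basicthmH}. The observation that the failure probability is a single event uniform in $\lambda$ (so no union bound over $\lambda$ is needed) is correct and matches the paper.

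However, there is a quantitative gap in the last step of your argument. You propose to combine the sharp identity $\|\widehat{F_{i\ell}}\|^2_{L^2(\widehat\Gamma)}=(|H|\,|H^*|^2)^{-1}$ with the $L^2$-averaged autocorrelation estimate of \Cref{coronabelglj2} through the Cauchy--Schwarz step \eqref{nonabelest}, and you claim this produces the prefactor $1/|H^*|$. It does not. Tracking it through: \Cref{coronabelglj2} gives $\|\widehat{G_\lambda}\|^2_{L^2(\widehat\Gamma)}\le \frac{|H|^2}{|\Gamma|^2}\big(\varepsilon+\frac{4C^2}{|H|}\big)$, and Cauchy--Schwarz then yields
\[
\bigl|\widehat{\delta_{H_\xi}}(\lambda)-\mathbb E[e(\xi,\lambda)]\widehat{\delta_H}(\lambda)\bigr|
= |\Gamma|\,\bigl|\langle\widehat{F_\lambda},\widehat{G_\lambda}\rangle_{L^2(\widehat\Gamma)}\bigr|
\le |\Gamma|\,\|\widehat{F_\lambda}\|\,\|\widehat{G_\lambda}\|
\le \frac{|H|^{1/2}}{|H^*|}\Bigl(\varepsilon+\frac{4C^2}{|H|}\Bigr)^{1/2},
\]
which overshoots the stated bound \eqref{mainerror} by a factor of $|H|^{1/2}$. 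The loss is structural: $\widehat{F_\lambda}$ is supported on the small set $H^*-\lambda$, whereas $\widehat{G_\lambda}$ is spread essentially uniformly over all of $\widehat\Gamma$ (its autocorrelation is close to $\mathsf{Var}\cdot\delta_0$, whose Fourier transform is constant), so Cauchy--Schwarz against the global $L^2$ norm of $\widehat{G_\lambda}$ is far from tight. The correct tool is the \emph{pointwise} bound of \Cref{cornonabelglj1}, which in the abelian case ($d_\rho=1$) reads $|\widehat{G_\lambda}(\rho)|^2\le \frac{1}{|H^*|^2}\big(\varepsilon+\frac{4C^2}{|H|}\big)$ for every $\rho$, combined not with Cauchy--Schwarz but with the exact identity $\langle\widehat{F_\lambda},\widehat{G_\lambda}\rangle_{\ell^2(\mathbb T_N^d)}=\frac{1}{|H^*|}\sum_{y\in H^*-\lambda}\overline{\widehat{G_\lambda}(y)}$ coming from the explicit form of $\widehat{F_\lambda}$. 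This is precisely what the paper's proof does via \eqref{hatfhatgerror} and \eqref{Ghatbound}. If you replace \Cref{coronabelglj2} by \Cref{cornonabelglj1} and bypass \eqref{nonabelest} in favour of that exact identity, your specialisation plan goes through and recovers the stated constant; as written, the proposal does not.
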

Observe that $C\le 4$ due to the boundedness of $e(\cdot,\cdot)$. The regime $(M_1\ldots M_r)^{1/r}\ll N$ and the choice $\left(\frac{M_1\ldots M_r}{N^r}\right)^{1/2}\ll \varepsilon\lesssim 1$ allow to obtain "good recovery with small error", and is thus the most interesting for recovery problems, in the following sense:
\begin{corollary}\label{corobasic}
Consider a sequence of $N^{(n)},M^{(n)}_1,\ldots M_r^{(n)}$ with corresponding torii $\mathbb T_N^d=\mathbb T_{N^{(n)}}^d$ and subsets $H=H^{(n)}$, and $\xi_p^{(n)},p\in H^{(n)}$ satisfying the hypotheses of \Cref{basicthm}, with $\xi_p^{(n)}$ independent from each other for different values of $n$.

\medskip

If we have

\[
    (M_1^{(n)}\ldots M_1^{(n)})^{1/r}\ll N^{(n)},
\]
then almost surely
\begin{equation}\label{asbasic}
    \left\|\widehat{\delta_{H_{\xi^{(n)}}}}- \mathbb E[e(\xi^{(n)},\cdot)]\widehat{\delta_{H^{(n)}}}\right\|_{\ell_\infty(\mathbb T_N^d)}\to 0.
\end{equation}
\end{corollary}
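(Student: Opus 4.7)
The plan is to apply \Cref{basicthm} at each index $n$ with a carefully chosen tolerance $\varepsilon_n$, and then to upgrade the resulting tail bounds to almost-sure convergence via the first Borel-Cantelli lemma. The independence of the $\xi^{(n)}$'s across $n$, while not strictly needed for Borel-Cantelli I, ensures that the bad events at different scales decouple cleanly.

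Set $P_n:=M_1^{(n)}\cdots M_r^{(n)}$ and $a_n:=P_n/(N^{(n)})^r=1/|H^{(n)}|$. The hypothesis $(M_1^{(n)}\cdots M_r^{(n)})^{1/r}\ll N^{(n)}$ is exactly $a_n\to 0$, which since $P_n\geq 1$ forces $N^{(n)}\to\infty$; hence $|H^{(n)}|\to\infty$, and when $d>r$ one also has $|H^{*,(n)}|=P_n(N^{(n)})^{d-r}\to\infty$.

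I then choose a sequence of tolerances $\varepsilon_n>\sqrt{72\,a_n}$ (so that the hypotheses of \Cref{basicthm} hold for large $n$) satisfying simultaneously
\begin{equation*}
\eta_n:=\frac{1}{|H^{*,(n)}|}\Bigl(\varepsilon_n+\frac{4C^2}{|H^{(n)}|}\Bigr)^{1/2}\longrightarrow 0
\qquad\text{and}\qquad
\sum_{n}\frac{72\,a_n}{\varepsilon_n^2}<\infty.
\end{equation*}
A canonical candidate is $\varepsilon_n:=n\sqrt{a_n}$, for which $a_n/\varepsilon_n^2=1/n^2$ is summable, while the deterministic bound $\eta_n$ is of order $\sqrt{n}\,a_n^{1/4}/|H^{*,(n)}|$ and tends to zero under the quantitative separation of scales provided by the hypothesis. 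Applying \Cref{basicthm} at each level $n$ with this tolerance yields
\begin{equation*}
\mathbb P\bigl(\|\widehat{\delta_{H_{\xi^{(n)}}}}-\mathbb E[e(\xi^{(n)},\cdot)]\widehat{\delta_{H^{(n)}}}\|_{\ell_\infty}>\eta_n\bigr)\leq\frac{72\,a_n}{\varepsilon_n^2},
\end{equation*}
and summability combined with Borel-Cantelli I implies that almost surely this event occurs for only finitely many $n$, so the $\ell_\infty$-norm is eventually dominated by $\eta_n\to 0$.

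The main technical delicacy is the joint calibration of $\varepsilon_n$: pushing it down sharpens the deterministic error bound $\eta_n$ but inflates the failure probability $a_n/\varepsilon_n^2$, and vice versa, so the argument really hinges on the asymptotic freedom provided by $a_n\to 0$ combined with the growth of $|H^{*,(n)}|$. Apart from this balancing act, the argument is a clean marriage of the quantitative estimate \Cref{basicthm} with the first Borel-Cantelli lemma.
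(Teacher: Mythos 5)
Your Borel--Cantelli strategy is the natural one, and the paper in fact gives no explicit proof of \Cref{corobasic}, so this is presumably the intended argument. You correctly flag the calibration of $\varepsilon_n$ as the ``main technical delicacy,'' but the step where you resolve it is the one place the argument actually breaks down.

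The claim that the choice $\varepsilon_n := n\sqrt{a_n}$ makes $\eta_n \sim \sqrt{n}\,a_n^{1/4}/|H^{*,(n)}|$ tend to zero ``under the quantitative separation of scales provided by the hypothesis'' is unfounded: the hypothesis $(M_1^{(n)}\cdots M_r^{(n)})^{1/r}\ll N^{(n)}$ is purely qualitative (it is exactly $a_n\to 0$) and supplies no rate. Take $d=r=1$, $M_1^{(n)}=1$ (so $H^{(n)}=\mathbb T_{N^{(n)}}$ is the whole torus), and $N^{(n)}=n$ for $n>12$. Then $a_n=1/n\to 0$ and the hypothesis holds, but $|H^{*,(n)}|=1$, and your $\varepsilon_n = n\cdot n^{-1/2}=n^{1/2}$ gives $\eta_n\sim n^{1/4}\to\infty$. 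Worse, \emph{no} choice of $\varepsilon_n$ works in this example: since $|H^{*,(n)}|$ is bounded, $\eta_n\to 0$ forces $\varepsilon_n\to 0$, hence eventually $72\,a_n/\varepsilon_n^2 > 72\,a_n = 72/n$, and the Borel--Cantelli series diverges. The calibration you need simply does not exist under the stated hypothesis alone.

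The root cause is that the failure probability in \Cref{basicthm} is a Chebyshev (polynomial) tail, which is too weak for Borel--Cantelli unless either $a_n$ decays summably or $|H^{*,(n)}|\to\infty$ (the latter holds automatically when $d>r$, but not when $d=r$ with $M_i^{(n)}$ bounded). To make the corollary correct in full generality one would have to either (i) impose an explicit summability assumption such as $\sum_n M_1^{(n)}\cdots M_r^{(n)}/(N^{(n)})^r<\infty$, or (ii) sharpen the concentration estimate in \Cref{variance} from Chebyshev to a Hoeffding/Bernstein-type exponential bound (the summands $e(\xi_p,\lambda)$ are uniformly bounded, so this is available), after which the tail decays fast enough to make the Borel--Cantelli series converge under much weaker growth of $N^{(n)}$. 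As written, the proposal (and indeed, the corollary as stated) has a genuine gap.
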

Given definition \eqref{dftdef}, it is natural in the pursuit of a bound \eqref{mainerror} to consider the functions $f, g:H\times\mathbb T_N^d\to\mathbb C$ defined by as
\begin{equation}\label{defab}
    f(p,\lambda):=e(p,\lambda),\quad g(p,\lambda):=\overline{e(\xi_p, \lambda) - \mathbb E[e(\xi_p,\lambda)]]}
\end{equation}
Note that $g$ is a random function, as it depends on the events $\omega$ through the $\xi_p$'s, whereas $f$ is deterministic. With definition \eqref{defab} we restate the desired bound \eqref{mainerror} as
\begin{equation}\label{mainerror2}
    \mathbb P\left\{\frac{\left\|\langle f_{\lambda}, g_{\lambda}\rangle_{\ell^2(H)}\right\|_{\ell^\infty(\mathbb T_N^d)}}{N^d}> 
    \frac{ 1}{|H^*|}\left(\varepsilon+\frac{4C^2}{|H|}\right)^{1/2}\right\}\le \frac{72 (M_1\ldots M_r)}{\varepsilon^2N^r},
\end{equation}
where we also use the notation $f_\lambda:=f(\cdot,\lambda), g_\lambda:=g(\cdot,\lambda)$. We note that $\langle f_\lambda,g_\lambda\rangle_{\ell^2(H)}=f_\lambda*g_\lambda(0)$, in which we use the convolution of functions $\phi,\psi$ over $\mathbb T_N^d$, defined by 
\[
    \phi*\psi(x):=\sum_{y\in\mathbb T_N^d}\phi(y)\psi(x-y).
\]
The function $f_\lambda*g_\lambda$ is supported on $H+H=H$ (the sum taking place in the group $\mathbb T_N^d=(\mathbb Z/N\mathbb Z)^d$) and therefore
\begin{equation}\label{fgphi}
    \langle f_\lambda, g_\lambda\rangle_{\ell^2(H)}=\sum_{y\in H}f_\lambda*g_\lambda (y) \delta_0(y),
\end{equation}
where $\delta_0:=\delta_H$ for the special choice $H=\{0\}$.

\medskip 

We extend the functions $f_\lambda,g_\lambda$ to $F_\lambda,G_\lambda:\mathbb T_N^d\times\mathbb T_N^d\to \mathbb R$ by zero outside $H$. Then we have
\[
    \langle f_\lambda, g_\lambda\rangle_{\ell^2(H)}=\langle F_\lambda, G_\lambda\rangle_{\ell^2(\mathbb T_N^d)},\quad F_\lambda*G_\lambda(y)=\delta_H(y)f_\lambda*g_\lambda(y).
\]
With our normalizations, we check that 
\begin{eqnarray}\label{DFTconvtorus}
    \widehat{F*G}(k)=\frac{1}{N^d}\sum_{q\in \mathbb T_N^d}(F*G)(q)e(q,k)&=&\frac1{N^d}\sum_{p,q\in \mathbb T_N^d}F(p)G(q-p)e(p,k)e(q-p,k)\nonumber\\
    &=& N^d\widehat F (k)\widehat G(k),
\end{eqnarray}
and that
\begin{eqnarray}\label{parseval}
    \langle F, G\rangle_{\ell^2(\mathbb T^d_N)}&=&\sum_{p\in\mathbb T_N^d}F(p)\overline{G(p)}\nonumber\\
    &=&\frac{1}{N^d}\sum_{\substack{p\in\mathbb T_N^d\\ k\in\mathbb T_N^d}}F(p)e(p,k)\overline{G(p)e(p,k)}=\frac{1}{N^d}\sum_{\substack{p,p'\in\mathbb T_N^d\\ k\in\mathbb T_N^d}}F(p)e(p,k)\overline{G(p')e(p',k)}\nonumber\\
    &=& N^d\sum_{k\in\mathbb T_N^d}\widehat F(k)\overline{\widehat G(k)}=N^d\langle \widehat F, \widehat G\rangle_{\ell^2(\mathbb T^d_N)}.
\end{eqnarray}
Combining \eqref{fgphi}, \eqref{DFTconvtorus} and \eqref{parseval}, we get (observing that $\widehat\delta_0\equiv N^{-d}$ by our definition)
\begin{equation*}
    \langle f_\lambda, g_\lambda\rangle_{\ell^2HX)}=\langle F_{\lambda}*G_{\lambda},\delta_0\rangle_{\ell^2(\mathbb T_N^d)}=N^d\sum_{x\in\mathbb T_N^d}\widehat{F_\lambda}(x)\overline{\widehat{G_\lambda}(x)}.
\end{equation*}
Note that, for $x\in \mathbb T_N^d$:
\begin{equation}\label{suppFlamb}
    \widehat{F_\lambda}(x)=\frac{1}{N^d}\sum_{p\in H}e(p,\lambda)e(p,x)=\widehat{\delta_H}(\lambda+x)=\frac{1}{N^{d-r}(M_1\ldots M_r)}\delta_{H^*-\lambda}(x).
\end{equation}
Therefore we get
\begin{equation}\label{hatfhatgerror}
    \langle f_\lambda, g_\lambda\rangle_{\ell^2(H)}=\frac{N^r}{M_1\ldots M_r}\sum_{y\in H^*-\lambda}\overline{\widehat G_\lambda(y)}.
\end{equation}
We note that the $r=d$ case of \eqref{hatfhatgerror} simplifies continuum-case results such as \cite[Theorem A.4]{pv}, or of \cite[Appendix]{Yakir}.

\medskip
We now study the terms $\widehat{G_\lambda}(y)$ from \eqref{hatfhatgerror}. Denoting $\widetilde{\varphi}(x):=\overline{\varphi(-x)}$, we rewrite the autocorrelation $\gamma_\lambda$ of the random functions $g(\lambda,q)=g_\lambda(q)$ as follows:
\begin{equation}\label{autocorrel}
    \gamma_\lambda(q):=\frac{1}{|H|}G_\lambda *\widetilde{G_\lambda}(q)=\frac{M_1\ldots M_r}{N^r}G_\lambda *\widetilde{G_\lambda}(q)=\left\{\begin{array}{ll}\frac{M_1\ldots M_r}{N^r}\sum_{p\in H}|g_\lambda(p)|^2&\text{ if }q=0,\\[3mm]
    \frac{M_1\ldots M_r}{N^r}\sum_{p\in H}g_\lambda(p)\overline{g_\lambda(p+q)}&\text{ else.}\end{array}\right.
\end{equation} 
From \eqref{DFTconvtorus} we also obtain, for $x\in\mathbb T_N^d$,
\begin{equation}\label{eqgamma}
    |\widehat{G_\lambda}|^2(x)=\frac1{N^d}\widehat{G_\lambda * \widetilde{G_\lambda}}(x)=\frac{1}{N^{d-r}(M_1\ldots M_r)}\widehat{\gamma_\lambda}(x).
\end{equation}

\begin{prop}\label{variance} 
    Let $\varepsilon>\left(\frac{72 M_1\ldots M_r}{N^r}\right)^{1/2}$
    and assume that $\frac{N^r}{M_1\ldots M_r}>12$. 
    Under our hypotheses on $\xi$, with probability at least $1-\frac{72}{\varepsilon^2}\frac{M_1\ldots M_r}{N^r}$ there holds
    \begin{equation}\label{prop4.4}
        \left|\gamma_\lambda(q)-\mathsf{Var}\left[e(\xi,\lambda)\right]\delta_0(q)\right|\leq \varepsilon.
    \end{equation}
\end{prop}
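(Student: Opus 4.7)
The plan is to adapt to the torus setting the argument behind \Cref{nonabelvariance}, treating the diagonal case $q=0$ and the off-diagonal case $q\neq 0$ separately. Recall that the random variables $g_\lambda(p)=\overline{e(\xi_p,\lambda)-\mathbb E[e(\xi_p,\lambda)]}$, $p\in H$, are i.i.d., have zero mean, and satisfy $|g_\lambda(p)|\le 2$ and $\mathbb E[|g_\lambda(p)|^2]=\mathsf{Var}[e(\xi,\lambda)]\le C$. Also note that $|H|=N^r/(M_1\cdots M_r)$, so $\frac{M_1\cdots M_r}{N^r}=\frac{1}{|H|}$, turning each expression in \eqref{autocorrel} into an empirical average.

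For $q=0$, the claim $|\gamma_\lambda(0)-\mathsf{Var}[e(\xi,\lambda)]|\le\varepsilon$ follows directly from Chebyshev's inequality applied to the i.i.d. bounded random variables $|g_\lambda(p)|^2$, whose common mean is $\mathsf{Var}[e(\xi,\lambda)]$ and whose variance is bounded by $4C^2\le 16$. The resulting probability bound is $O\!\left(\frac{C^2}{\varepsilon^2|H|}\right)$, which fits within the desired $\frac{72(M_1\cdots M_r)}{\varepsilon^2 N^r}$.

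For $q\neq 0$, the main obstacle is that the summands $g_\lambda(p)\overline{g_\lambda(p+q)}$ across $p\in H$ are not independent, since $p$ and $p+q$ belong to a common $\langle q\rangle$-orbit. I would handle this exactly as in \Cref{parti3nonabel}: split $H$ into three classes $H_{q,0},H_{q,1},H_{q,2}$ by 3-coloring each $\langle q\rangle$-orbit according to the index modulo $3$, so that for $p,p'$ in the same class with $p\neq p'$ the sets $\{p,p+q\}$ and $\{p',p'+q\}$ are disjoint. Under the hypothesis $|H|/(M_1\cdots M_r)>12$ one can ensure each $|H_{q,i}|\ge|H|/6$ (with room to spare), so that within each class the variables $g_\lambda(p)\overline{g_\lambda(p+q)}$ are i.i.d.\ with zero mean (using independence of $\xi_p,\xi_{p+q}$ and the zero-mean of $g_\lambda$) and variance at most $4C^2$.

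I would then apply Chebyshev to each of the three partial sums $S_{q,i}=\sum_{p\in H_{q,i}}g_\lambda(p)\overline{g_\lambda(p+q)}$ against threshold $\varepsilon/3$, obtaining that each term deviates from zero by more than $\varepsilon/3$ with probability at most $\frac{36\cdot 4C^2}{\varepsilon^2 |H_{q,i}|}\le\frac{C'}{\varepsilon^2|H|}$ for a numeric constant $C'$. A union bound over $i\in\{0,1,2\}$ combined with the triangle inequality $\gamma_\lambda(q)=\frac{1}{|H|}(S_{q,0}+S_{q,1}+S_{q,2})$ yields \eqref{prop4.4} for $q\neq 0$ with probability at least $1-\frac{72(M_1\cdots M_r)}{\varepsilon^2 N^r}$. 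Since the same bound also applies in the $q=0$ case, the proof is complete. The only non-routine step is the three-coloring of $H$, which is standard and identical in form to \Cref{parti3nonabel}; the use of $\frac{N^r}{M_1\cdots M_r}>12$ is exactly what is needed to guarantee each partition class contains at least $|H|/6$ elements.
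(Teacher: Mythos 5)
Your proposal is structurally correct and is exactly the natural adaptation of \Cref{nonabelvariance} and \Cref{parti3nonabel} to the torus setting (the paper itself states \Cref{variance} without giving a separate proof, implicitly appealing to the nonabelian argument). However, the constant-chasing as written does not actually produce the stated bound $\frac{72}{\varepsilon^2|H|}$.

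The issue is the choice of threshold $\varepsilon/3$ on each block. With $|H_{q,i}|\ge|H|/6$ (which is exactly what $|H|>12$ together with $||H_{q,i}|-|H|/3|\le 2$ gives), applying Chebyshev with threshold $\varepsilon/3$ to $\frac{1}{|H_{q,i}|}S_{q,i}$ yields a per-block probability of $\frac{9\cdot 4C^2}{\varepsilon^2|H_{q,i}|}\le\frac{9\cdot 4\cdot 6\,C^2}{\varepsilon^2|H|}$; after the union bound over $i\in\{0,1,2\}$ this is $\frac{648\,C^2}{\varepsilon^2|H|}$, an order of magnitude larger than $72$. The paper's own proof of \Cref{nonabelvariance} instead applies Chebyshev at level $\varepsilon$ (not $\varepsilon/3$) to the normalized block average $\frac{1}{|H_{q,i}|}S_{q,i}$, and then uses the convex-combination identity $\gamma_\lambda(q)=\sum_i\frac{|H_{q,i}|}{|H|}\cdot\frac{1}{|H_{q,i}|}S_{q,i}$; since the weights sum to $1$, controlling each block average by $\varepsilon$ already controls $\gamma_\lambda(q)$ by $\varepsilon$. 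This saves the spurious factor of $9$ and gives per-block probability $\frac{4C^2}{\varepsilon^2|H_{q,i}|}\le\frac{24C^2}{\varepsilon^2|H|}$, hence $\frac{72C^2}{\varepsilon^2|H|}\le\frac{72}{\varepsilon^2|H|}$ after the union bound (using $C\le1$ since $|e(\xi,\lambda)|=1$). So the constant $72=3\cdot 4\cdot 6$ drops out exactly. Apart from this — and a typo where you wrote $|H|/(M_1\cdots M_r)>12$ for what should be $N^r/(M_1\cdots M_r)=|H|>12$ — the argument, the three-coloring via \Cref{parti3nonabel}, the $q=0$ vs.\ $q\neq 0$ case split, and the zero-mean/independence observations are all correct.
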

\begin{proof}[End of proof of \Cref{basicthm}:]
    Consider the small probability event $\mathcal A_\epsilon$ for which \eqref{prop4.4} holds. Conditioned on $\mathcal A_\epsilon^c$, we use the pointwise bound of Proposition \eqref{variance}, triangular inequality over $H$ and \eqref{eqgamma}, obtaining:
    \begin{equation}\label{Ghatbound}
        \left|N^{d-r}(M_1\ldots M_r)|\widehat{G_{\lambda}}(x)|^2 - \mathsf{Var}\left[e(\xi,\lambda)\right]\widehat{\delta_0}(x)\right|\le \varepsilon\frac{|H|}{N^d}=\frac{\varepsilon  }{N^{d-r}(M_1\ldots M_r)}.
    \end{equation}
    By substituting \eqref{Ghatbound} in \eqref{hatfhatgerror} we obtain the following estimation for $\langle f_{\lambda},g_{\lambda}\rangle_{l^2(H)}$ valid in $\Omega\setminus\mathcal A_\epsilon$:
    \begin{eqnarray}\label{chainineq}
        \frac{|\langle f_{\lambda},g_{\lambda}\rangle_{\ell^2(H)}|}{N^d}
        &\leq&\frac{1}{M_1\ldots M_r}\frac{1}{N^{d-r}}\sum_{y\in H^*-\lambda}|\widehat{G_{\lambda}(y)}|
        \nonumber\\
        &\leq&\frac{1}{N^{\frac{3}{2}(d-r)}(M_1\ldots M_r)^{\frac{3}{2}}}\sum_{y\in H^*-\lambda}\left(\mathsf{Var}[e(\xi,\lambda)]\widehat{\delta_0}(y)+\frac{\varepsilon}{N^{d-r}(M_1\ldots M_r)}\right)^{1/2}\nonumber\\
        &=&\frac{|H^*|}{N^{\frac{3}{2}(d-r)}(M_1\ldots M_r)^{\frac{3}{2}}}\left(\frac{\mathsf{Var}[e(\xi,\lambda)]}{N^d}+\frac{\varepsilon}{N^{d-r}(M_1\ldots M_r)}\right)^{1/2}\nonumber\\
        &=& \frac{1}{N^{\frac{d-r}{2}}(M_1\dots M_r)^{\frac{3}{2}}}\left(\frac{\mathsf{Var}[e(\xi,\lambda)]}{N^d}+\frac{\varepsilon}{N^{d-r}(M_1\ldots M_r)}\right)^{1/2}\nonumber\\
        &\leq & \frac{1}{N^{d-r}(M_1\dots M_r)}\left(\varepsilon+\frac{C M_1\ldots M_r}{N^r}\right)^{1/2}\nonumber\\
        &=& \frac{1}{|H^*|}\left(\varepsilon+\frac{C}{|H|}\right)^{1/2}\nonumber.
    \end{eqnarray}
    The claimed bound \eqref{mainerror2} follows directly, completing the proof of \Cref{basicthm}. 
\end{proof}
\subsection{Almost sure recovery of discrete subgroups in $\mathbb R^d$ as a limit case}\label{sec: eucrecov}
In this section, we see how the quantitative recovery given by \Cref{basicthm} implies almost sure recovery of the Fourier spectrum for discrete subgroups of $\mathbb{R}^d$. 
By a ($r$-dimensional) discrete subgroup, we mean a subgroup $X\leq\mathbb{R}^d$ of the form $H=v_1\mathbb{Z}\oplus\ldots\oplus v_r\mathbb{Z}$, where $\{v_1,\ldots,v_d\}$ is a basis of $\mathbb{R}^d$ and $r\leq d$.
\medskip 

By a (complex) measure, we mean a linear functional on the set of compactly supported $C_c(\R^d)$ such that for every compact set $K\subset\R^d$, there is a positive constant $M_K$ such that 
\[|\mu(f)|\leq M_k\|f\|_{\infty},\]

for every $f\in C_c(\R^d)$ supported on $K$, and where $\|\cdot\|_{\infty}$ denotes the supremum norm in $C_c(\R^d)$. By the Riesz Representation Theorem, there is an equivalence between this definition of measure and the classical measure-theoretic concept of regular Radon measure. 
\medskip

Observe that in this case, the co-adjoint map $\mathsf{Ad}^*$ reduces to the identity, and the space of coadjoint orbits $\mathfrak{g}/\mathsf{Ad}^*G$ is equals to $\R^d$. Then, the elements of $\widehat{\R^d}$ correspond to the $1$-dimensional irreducible representations 
\[
\chi_{\lambda}:=e^{-2\pi i\langle\cdot,\lambda\rangle},\qquad\text{where }\lambda\in\R^d,
\]
which act naturally over $L^2(\R)$; thus, in our case we have that $s:=\mathsf{dim}(G/K_{\lambda})=1$.
\medskip

For a finite measure $\mu$ over $\R^d$, its Fourier transform is defined by

\begin{equation*}
    \widehat{\mu}(\lambda):=\int_{\R^d}e^{-2\pi i\langle x,\lambda\rangle}d\mu(x),\qquad\text{for }\lambda\in\R^d.
\end{equation*}
\medskip

For $H$ being an $r$-dimensional discrete subgroup of $\mathbb{R}^d$, $r\leq d$, we define its {\em (averaged) Fourier transform} $\mathcal{F}(\delta_H)$ as follows
\begin{equation}\label{FT}
\begin{split}
    (\forall\lambda\in\R^d),\qquad\mathcal{F}(\delta_H)(\lambda)
    &:=\lim_{R\to\infty}\frac{\mathcal{F}(\delta_{H\cap [-R/2,R/2)^d})(\lambda)}{R^d}=\lim_{R\to\infty}\frac{1}{R^{r}}\sum_{p\in H\cap [-R/2,R/2)^d}e^{-2\pi i\langle p,\lambda\rangle}
\end{split}    
\end{equation}
For an $r$-dimensional discrete subgroup $H$, consider i.i.d random vectors $(\xi_p)_{p\in H}$ defined in a probability space $(\Omega, \mathcal{T},\mathbb P)$, and the corresponding realization set $ X_{\xi}=\{p+\xi_p:\ p\in H\}$. 
We define the Fourier Transform of $H_{\xi}$ in duality with $C_c(\mathbb{R}^d)$ like in \eqref{FT} by
\begin{equation}\label{FTxi}
    \mathcal{F}\left(\delta_{H_{\xi}}\right)
    :=\lim_{R\to\infty}\frac{1}{R^r}\mathcal{F}\left(\delta_{H_{\xi}\cap \left[-R/2,R/2\right)^d}\right).
\end{equation}
Thus, in this section our main result is the almost sure recovery of $H$ from its random perturbations under suitable moment conditions of the perturbations $\xi_p$.
\begin{thm}\label{thm:contsquare}
    Assume that $1\le r\le d$, that $H\subset \R^d$ is an $r$-dimensional discrete subgroup, and $\varepsilon>0$ is such that $\mathbb E[|\pi_r(\xi)|^{r+\varepsilon}]<\infty$, where $\pi_r:\R^d\to\R^r$ is the orthogonal projection onto the first $r$ coordinates.
    Then almost surely, there holds:
    \[
        \mathcal{F}\left(\delta_{H_{\xi}}\right)=\mathbb E\left(e^{-2\pi i\langle\xi,\cdot\rangle}\right)\mathcal{F}\left(\delta_{H}\right).
    \]
\end{thm}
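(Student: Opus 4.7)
The plan is to obtain \Cref{thm:contsquare} directly as a specialization of \Cref{nilrecov} to the abelian (hence nilpotent, of step one) case $\Gamma = \R^d$. Indeed, since the Lie algebra $\mathfrak{g} = \R^d$ carries the trivial bracket, the form $B_l$ of \eqref{bilinear} vanishes identically for every $l\in\mathfrak g^*$, so the radical is $r_l = \mathfrak g$ and by \eqref{dimindrep} we have $s = 0$. A polarizing subalgebra is then all of $\mathfrak g$, so $K_l = \R^d$ and the induced representation collapses to the one-dimensional character $\chi_l(x) = e^{2\pi i\langle x,\lambda\rangle}$ acting on $\mathcal H_l \cong \C$, where we identify $l\in\mathfrak g^*$ with $\lambda\in\R^d$. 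Up to the sign convention fixed in \eqref{FT}, this is the parametrization of $\widehat{\R^d}$ used in the statement.

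Next, I would put $H$ in the lattice normal form of \Cref{ssec:recovnil}. Since $\{v_1,\ldots,v_d\}$ is a basis of $\R^d$, the linear change of variables sending $v_i\mapsto e_i$ turns $H$ into $\Z^r \oplus \{0\}^{d-r}$ contained in the ambient lattice $\Z^d$, matching \eqref{lattpointnil} with $N_1 = \cdots = N_r = 1$. Under this linear map, the $(r+\varepsilon)$-moment hypothesis on $\pi_r(\xi)$ transfers to the corresponding moment hypothesis on the first-$r$-coordinates projection in the new basis, by equivalence of norms on $\R^r$. Moreover, since $s = 0$, the averaged Fourier transforms \eqref{FTnil} and its perturbed counterpart reduce to the scalar limits \eqref{FT} and \eqref{FTxi} appearing in the statement.

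With these identifications in place, the conclusion \eqref{recovnil} of \Cref{nilrecov} reads, almost surely,
\[
\mathcal F(\delta_{H_\xi})(\lambda) \;=\; \mathcal F(\delta_H)(\lambda)\cdot \mathbb E\bigl[e^{-2\pi i\langle\xi,\lambda\rangle}\bigr]\qquad\text{for every }\lambda\in\R^d,
\]
which is exactly the desired equality. There is no real obstacle: the only delicate points are keeping track of the sign convention in the exponential so that Kirillov's $\chi_l$ matches the $\chi_\lambda$ of \eqref{FT}, and verifying that the moment condition is preserved under the basis change; both are routine.
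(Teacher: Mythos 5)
Your proposal is logically correct, but it takes a genuinely different route from the paper's own proof. The paper proves \Cref{thm:contsquare} directly and self-containedly in \Cref{sec: eucrecov}: it sets up the rescaled DFT on the discrete torii $\mathbb T^d_{M,N}$, proves a Euclidean-specific discrete-to-continuum lemma (\Cref{DFT-FT}), specializes the finite-group recovery bound to the torus setting (\Cref{basicthm2}), and then sends $M,N\to\infty$ with $\varepsilon\to 0$. That argument never cites \Cref{nilrecov}; it is an abelian replay of the same strategy, intended as an explicit illustration and sanity check of the general machinery (which is why it appears under ``Examples''). You instead treat \eqref{FT}--\eqref{FTxi} as the $s=0$ specialization of the Kirillov-theoretic setup and invoke \Cref{nilrecov} as a black box. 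Your identifications are accurate: for $\Gamma=\R^d$ the bracket is trivial, so $B_l\equiv 0$, $r_l=\mathfrak g$, $s=\frac12\dim(\mathfrak g/r_l)=0$, $\mathfrak m_l=\mathfrak g$, $K_l=\Gamma$, and the induced representation degenerates to the character $\chi_l$ on $\mathcal H_l\cong\C$; thus \eqref{FTnil} reduces to \eqref{FT} (up to the $l\leftrightarrow -\lambda$ sign convention you note), and \eqref{recovnil} is exactly the desired identity. Your route is more economical; the paper's route is more transparent and does not depend on the reader having internalized the nilpotent analysis.

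One small caution, common to both your write-up and the paper's: the change-of-basis step is not quite as innocent as ``equivalence of norms on $\R^r$'' suggests. The hypothesis in \Cref{thm:contsquare} bounds $|\pi_r(\xi)|$, i.e.\ the projection onto the first $r$ \emph{standard} coordinates, while a general $A\in GL(\R^d)$ with $Ae_i=v_i$ makes $\pi_r(A^{-1}\xi)$ depend on all $d$ components of $\xi$, so the moment condition does not transfer for an arbitrary such $A$. The clean way to handle it is to first apply a rotation taking $\mathrm{span}(v_1,\dots,v_r)$ to $\R^r\times\{0\}^{d-r}$ (which preserves all moment conditions) and then a block transformation $B\oplus I_{d-r}$ with $B\in GL(\R^r)$, for which the norm-equivalence argument on $\R^r$ genuinely applies. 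Since the paper also waves this off as ``left to the reader'', it is not a gap specific to your proposal, but it is worth spelling out if you keep the specialization route.
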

We will restrict ourselves to the case $H=\mathbb Z^r\subset \mathbb{R}^d$, $r\leq d$; the case of more general discrete subgroups $H=v_1\mathbb{Z}\oplus\ldots \oplus v_r\mathbb{Z}$ then follows by a change of basis via a suitable $A\in GL(\mathbb{R}^d)$ that sends $\{e_1\ldots,e_r\}$ to $\{v_1,\ldots v_r\}$, and is left to the reader.
\medskip

For $M, N$ being positive integers, consider the torus
\[
    H_N=\mathbb T^r_N:=(\mathbb Z/N\mathbb Z)^r\subset\mathbb T_{M,N}^d:=\left(\left(\frac1M\mathbb Z\right)/N\mathbb Z\right)^d.
\]
\subsubsection{Rescaled DFT}\label{fmn}
Now we check how the above rescaling is inherited to the discrete Fourier transforms of interest to us.
Consider the torii $\mathbb T_{MN}^d:=(\Z/MN\Z)^d$ and $\widetilde{H}:=(M\Z/NM\Z)^r\simeq(M\Z/NM\Z)^r\times\{0\}^{d-r}\subseteq \mathbb T_{MN}^d$ first. Then $NM,M$ plays the role of $N,M$ of the previous section. 
In this case $\widetilde{H}^*=(N\mathbb Z/NM\mathbb Z)^r\oplus(\mathbb{Z}/MN\mathbb{Z})^{d-r}$, and we have $|\widetilde{H}|=N^r,|\widetilde{H}^*|=M^dN^{d-r}$. 
For the case $r=d$, the continuum non-compact version of the problem is that in which $\widetilde{H}$ is replaced by a lattice $\Lambda\subset\mathbb R^d$, and $\widetilde{H^*}$ is replaced by its dual lattice $\Lambda^*$, whose density satisfies $\mathrm{dens}(\Lambda)\mathrm{dens}(\Lambda^*)=1$.
\medskip 

In the setting of \Cref{ssection: setting} (with the roles of $N, M_1,\dots,M_r$ replaced here by $MN, M,\dots,M$), the DFT of $f:\mathbb T_{MN}^d\to\mathbb C$ is given by \eqref{dftdef} as
\begin{equation}\label{DFT3}
    \mathcal F_{MN}(f)(k):=\frac{1}{M^dN^d}\sum_{p\in\mathbb T_{MN}^d}e^{\frac{-2\pi i}{MN}p\cdot k}f(p)\qquad \text{for }k\in\mathbb T_{MN}^d,
\end{equation}
For algebraic considerations (i.e., without including metric aspects), definition \eqref{DFT3} would suffice. 
However, our study is based on the fact that, as $M\to\infty$, the metric spaces $\mathbb T^d_{M,N}$ approximate better and better $\R^d/N\Z^d$ (since $\mathbb T^d_{M,N}$ is a $1/M$-net of $\R^d/N\Z^d$, these spaces converge to each other in Gromov-Hausdorff sense, for example). 
Thus we are led to define
\begin{equation}\label{emn}
    e_{M,N}(p,k):=e^{-\frac{2\pi i}{N}\langle p, k\rangle}, \quad p\in\mathbb T^d_{M,N}.
\end{equation}

At the level of Fourier-dual spaces, we observe that the compact group $\R^d/N\Z^d$ is naturally in duality with $\frac1N \Z^d$: 
functions over $\R^d/N\Z^d$ correspond to $N\Z^d$-periodic functions over $\R^d$, and the smallest frequency that can be sampled via these functions scales like $1/N$. 
In other words, the Pontryagin dual of $\mathbb{T}_{M,N}^d$ can be identified with $\mathbb{T}_{N,M}^d$, where the family of characters is given by $e_{M,N}(\cdot,Nk'),\ k'\in\mathbb{T}_{N,M}^d$. 
We then use the definition $\mathcal F_{M,N}(f)=\mathcal F_{MN}\left(f\circ(\mathsf{dil}_{1/M})^{-1}\right) \circ \mathsf{dil}_{1/N}$, where for $a>0$ the map $\mathsf{dil}_a:\mathbb T^d_{MN}\to a\mathbb T^d_{MN}$ is defined by $\mathsf{dil}_a(p)=ap$.
Explicitly, we have then
\begin{equation}\label{DFT4}
    \mathcal F_{M,N}:\ell^2(\mathbb T^d_{M,N})\to\ell^2(\mathbb T^d_{N,M}),\quad \mathcal F_{M,N}(f)(k'):=\frac1{M^dN^d}\sum_{p\in\mathbb T^d_{M,N}}e_{M,N}(p,N k') f(p).
\end{equation}
If $r=d$, one can check that $(\mathcal F_{M,N})^{-1}(g)=\mathcal F_{N,M}(\widetilde{g})$ where $\widetilde g(k):=\overline{g(-k)}$. 
\medskip

As we showed before, \Cref{thm:contsquare} is a consequence of the fact that the Fourier transforms of the Dirac measures supported on $H=\mathbb Z^r$ and $H_{\xi}\mathbb Z^r_{\xi}$ correspond to the (normalized) limit of the DFT of $H_N$ and $H_{\xi^{(M,N)}}$. \Cref{DFT-FT} below is the adapted version of \Cref{nilDFT-FT} in the Euclidean setting, where $s=\mathsf{dim}(G/K_l)=1$, with the definitions of \Cref{sec: lattnil}.

\begin{lemma}\label{DFT-FT}
    Let $(\xi_p)_{p\in\Z^r}$ be i.i.d random vectors and let $\varepsilon>0$ be such that $\mathbb E[|\pi_r(\xi)|^{r+\varepsilon}]<\infty$, where $\pi_r:\mathbb R^d\to\mathbb R^r$ is the orthogonal projection onto the first $r$ coordinates.
    Let $H_N := (\Z/N\Z)^r\subset \mathbb T^d_{M,N}$ and let $\xi^{(M,N)}$ be as in Subsection \ref{ximnnil}. 
    Then almost surely, there holds:
    \begin{equation}\label{lemmalimDFT}
    \begin{split}
        \lim_{N\to\infty}\lim_{M\to\infty}M^{d-1}N^{d-r}\mathcal F_{M,N}(\delta_{H_N})=\mathcal{F}(\delta_{\mathbb Z^r})\ \text{ and }\ \lim_{N\to\infty}\lim_{M\to\infty}M^{d-1}N^{d-r}\mathcal F_{M,N}\left(\delta_{H_{\xi}}\right)=\mathcal{F}(\delta_{\mathbb Z^r_{\xi}}),
    \end{split}
    \end{equation}
where the limit for $M\to\infty$ is taken in the sense of \Cref{weaknilconv}, and the limit with respect to 
$N\to\infty$ is in the classical sense.  
\end{lemma}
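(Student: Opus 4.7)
The proof is the direct Euclidean specialization of \Cref{nilDFT-FT} (for the first limit) and of \Cref{lemmalimxiDFT-nil} (for the second limit), using that in the abelian case $G = \mathbb{R}^d$ the coadjoint action is trivial, so for every $\lambda \in \mathbb{R}^d$ the irreducible representation $\pi_l$ collapses to the one-dimensional character $\chi_\lambda = e^{-2\pi i \langle \cdot, \lambda\rangle}$. The finite-group approximants $\pi_{M,N,m,c}$ from \eqref{approxrepnil} reduce to characters $\chi_{\lambda_M}: \mathbb{T}^d_{M,N} \to S^1$ for a suitable sequence $\lambda_M \in \mathbb{T}^d_{N,M}$ with $\lambda_M \to \lambda$ as $M \to \infty$, and the representation Hilbert space is trivially $\mathbb{C}$.

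For the first limit I would fix $N$ and $\lambda \in \mathbb{R}^d$, choose $\lambda_M$ as above, and unfold the pairing underlying \Cref{weaknilconv} applied to test functions $\varphi,\psi \in C_c(\mathbb{R})$ (periodized as in \Cref{rmk:periodization}). The sum over $H_N = (\mathbb{Z}/N\mathbb{Z})^r \subset \mathbb{T}^d_{M,N}$ contributes exactly $N^r$ terms in the first $r$ coordinates, which absorbs the factor $N^{d-r}$ in the normalization. The remaining $1/M$ factor is exactly the step-size of a Riemann sum in the single "test" coordinate arising from $s=1$, so as $M \to \infty$ the expression converges to $\tfrac{1}{N^r}$ times the windowed pairing
\[
    \int_{[-N/2,N/2)} \sum_{p \in \mathbb{Z}^r \cap [-N/2,N/2)^r} e^{-2\pi i \langle p, \lambda\rangle}\, \varphi(x)\,\overline{\psi(x)}\, dx,
\]
which recovers $\tfrac{1}{N^r}\mathcal{F}(\delta_{\mathbb{Z}^r \cap [-N/2,N/2)^r})(\lambda)$ tested against $\varphi, \psi$. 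Sending $N \to \infty$ in the classical sense then yields $\mathcal{F}(\delta_{\mathbb{Z}^r})(\lambda)$ by the definition \eqref{FT}.

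For the second limit, the extra input is the moment hypothesis $\mathbb{E}[|\pi_r(\xi)|^{r+\varepsilon}] < \infty$. Exactly as in the proof of \Cref{lemmalimxiDFT-nil}, this ensures via the argument of \cite[Lemma 3.1]{Yakir} that the "escape" and "entry" sets
\[
    A_N := \{p \in \mathbb{Z}^r \cap [-N/2,N/2)^r : p + \xi_p \notin [-N/2,N/2)^d\}, \qquad A_N' := \{p \in \mathbb{Z}^r \setminus [-N/2,N/2)^r : p + \xi_p \in [-N/2,N/2)^d\}
\]
satisfy $|A_N|/N^r, |A_N'|/N^r \to 0$ almost surely. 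Combined with the pointwise convergence $\xi^{(M,N)}_p \to \xi_p$ from \Cref{limitxi} and the independence recorded in \Cref{disxi1}, the Riemann-sum argument of the first limit adapts verbatim: one splits the sum over $H_\xi \cap [-N/2,N/2)^d$ into the contribution from $(\mathbb{Z}^r \setminus A_N) \cap [-N/2,N/2)^r$ (which converges via the same Riemann-sum mechanism, now applied pathwise) plus the error terms $S_N, S_N'$ supported on $A_N, A_N'$, which vanish after division by $N^r$ by the moment bound.

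The main obstacle is purely bookkeeping: verifying that the rescalings, the restrictions to cubes $[-N/2,N/2)^d$, and the periodizations of the test functions interact correctly with \Cref{weaknilconv}, and that the two limits (first $M \to \infty$ in the weak sense, then $N \to \infty$ classically) can be taken in this order without extra uniformity assumptions. Since $d_{\pi_l} = 1$ throughout, all traces and Hilbert--Schmidt norms reduce to scalars, and no new analytic input beyond what appears in \Cref{nilDFT-FT} and \Cref{lemmalimxiDFT-nil} is needed.
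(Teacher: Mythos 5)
Your proposal matches the paper's treatment: Lemma~\ref{DFT-FT} is stated in the paper without a separate proof, accompanied only by the remark that it is the adaptation of Lemma~\ref{nilDFT-FT} (and, for the perturbed version, Lemma~\ref{lemmalimxiDFT-nil}) to the Euclidean setting where $s=\mathsf{dim}(G/K_l)=1$, which is exactly the specialization you carry out. One small internal tension worth tidying up: you say the representation Hilbert space is ``trivially~$\mathbb{C}$'' but then (correctly) attribute the residual $1/M$ factor to a Riemann-sum mesh over the single test coordinate coming from $s=1$; these two readings agree only if, following the paper's convention, one regards $\chi_\lambda$ as acting by scalar multiplication on $L^2(\mathbb{R})$ rather than as a literally zero-dimensional ($s=0$) representation on~$\mathbb{C}$.
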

To prove the recovery result of \Cref{thm:contsquare}, we re-express \Cref{basicthm} and \Cref{corobasic} for the torii $\mathbb{T}_{M,N}^d$ and $X=\mathbb{T}_N^r$. To do this we replace $(N,M_1,\dots,M_r)\mapsto (NM, M,\dots,M)$ obtaining the following result:
\begin{prop}\label{basicthm2}
    Let $\xi_p, p\in H_N=\mathbb{T}_N^r$ be i.i.d. random variables with $\xi_p\sim\xi$ satisfying $C:=\max_{\lambda\in\mathbb T^d_{N,M}}\mathsf{Var}[e(\xi,\lambda)]<\infty$, and assume that $N^r>12$ and $\varepsilon>\sqrt{\frac{72}{N^r}}$.
    Then with probability at least $1-\frac{72}{\varepsilon^2 N^r}$ we have 
    \begin{equation}\label{mainerror2'}
        \max_{\lambda\in \mathbb T_{N,M}^d}\left|\mathcal{F}_{M,N}(\delta_{(H_N)_\xi})(\lambda) - \mathbb E\left[e(\xi,\lambda)\right]\ \mathcal{F}_{M,N}(\delta_{H_N})(\lambda)\right|\le \frac{1}{M^dN^{d-r}}\sqrt{\varepsilon+\frac{4C^2}{N^r}}.
    \end{equation}
\end{prop}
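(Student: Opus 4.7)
The plan is to reduce this directly to Theorem \ref{basicthm}, by matching up the two setups under the parameter substitution $(N, M_1, \ldots, M_r) \mapsto (MN, M, \ldots, M)$ and tracking the effect of the rescalings used to pass from $\mathcal{F}_{MN}$ on $\mathbb{T}_{MN}^d$ to the normalized transform $\mathcal{F}_{M,N}$ on $\mathbb{T}_{M,N}^d$ defined in \eqref{DFT4}.

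First I would set the correspondence at the algebraic level. The dilation $\mathsf{dil}_M:\mathbb{T}_{M,N}^d\to\mathbb{T}_{MN}^d$ identifies $H_N = (\mathbb{Z}/N\mathbb{Z})^r\subset \mathbb{T}_{M,N}^d$ with $\widetilde{H} = (M\mathbb{Z}/MN\mathbb{Z})^r\subset \mathbb{T}_{MN}^d$, and the i.i.d.\ perturbations $\xi_p^{(M,N)}$ transport along the same map. Thus the statement of Theorem \ref{basicthm} applied in the ambient group $\mathbb{T}_{MN}^d$ with subgroup $\widetilde{H}$ (so that all the $M_i$ equal $M$ and the ambient ``$N$'' equals $MN$) gives, for $(MN)^r/M^r = N^r > 12$ and $\varepsilon > \sqrt{72 M^r/(MN)^r} = \sqrt{72/N^r}$, and with probability at least $1 - 72 M^r/(\varepsilon^2 (MN)^r) = 1 - 72/(\varepsilon^2 N^r)$, the pointwise bound
\[
    \max_{\lambda \in \mathbb{T}_{MN}^d} \left| \widehat{\delta_{\widetilde{H}_\xi}}(\lambda) - \mathbb{E}[e(\xi,\lambda)]\, \widehat{\delta_{\widetilde{H}}}(\lambda) \right| \leq \frac{1}{|\widetilde{H}^*|}\sqrt{\varepsilon + \frac{4C^2}{|\widetilde{H}|}} = \frac{1}{M^d N^{d-r}}\sqrt{\varepsilon + \frac{4C^2}{N^r}},
\]
using $|\widetilde{H}| = (MN)^r/M^r = N^r$ and $|\widetilde{H}^*| = M^r (MN)^{d-r} = M^d N^{d-r}$.

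Then I would reconcile the normalizations. By the definition $\mathcal{F}_{M,N}(f) = \mathcal{F}_{MN}(f\circ \mathsf{dil}_{1/M}^{-1}) \circ \mathsf{dil}_{1/N}$ in \eqref{DFT4}, the value $\mathcal{F}_{M,N}(\delta_{H_N})(\lambda)$ at $\lambda \in \mathbb{T}_{N,M}^d$ equals $\mathcal{F}_{MN}(\delta_{\widetilde{H}})(N\lambda)$ under the identification of Pontryagin duals, and similarly for the $\xi$-perturbed version; the random perturbation $\xi$ in the rescaled model corresponds to the random perturbation $\xi$ in $\mathbb{T}_{MN}^d$ via the same dilation, so the characters $e(\xi,\lambda)$ match. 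Since the dual $\widetilde{H}^*$ is exactly the set of Pontryagin-dual elements of $\mathbb{T}_{MN}^d$ of the form $N\lambda$ with $\lambda\in\mathbb{T}_{N,M}^d$, the supremum over $\lambda\in\mathbb{T}_{N,M}^d$ on the left of \eqref{mainerror2'} is bounded by the supremum over all of $\mathbb{T}_{MN}^d$ on the left of the displayed inequality above, and we land exactly on the desired bound.

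No step is expected to be hard: the content is entirely that of Theorem \ref{basicthm}. The only mildly delicate point is bookkeeping the normalizations, so that the prefactor $1/|\widetilde{H}^*| = 1/(M^d N^{d-r})$ from Theorem \ref{basicthm} indeed matches the right-hand side of \eqref{mainerror2'} after the dilation, and that the hypothesis $|H|\geq 12$ of Theorem \ref{basicthm} translates to $N^r>12$ here.
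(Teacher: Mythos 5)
Your proposal is correct and is exactly the approach the paper takes: the paper does not write out a proof of this proposition but simply states it as the result of substituting $(N,M_1,\dots,M_r)\mapsto(MN,M,\dots,M)$ in \Cref{basicthm} together with the dilation-normalization of $\mathcal F_{M,N}$ from \eqref{DFT4}, and your write-up is precisely that bookkeeping carried out. One small factual slip: $\{N\lambda:\lambda\in\mathbb T_{N,M}^d\}$ is all of $\mathbb T_{MN}^d$ (not $\widetilde H^*$, which is a proper subgroup), but this does not affect the argument since all you use is that $\lambda\mapsto N\lambda$ is a bijection $\mathbb T_{N,M}^d\to\mathbb T_{MN}^d$, making the two maxima coincide.
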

\begin{proof}[Proof of \Cref{thm:contsquare}:]
    Following \Cref{DFT-FT}, we need an estimation for the error in \Cref{basicthm2}, after multiplying the left-hand side of \eqref{mainerror2} by $M^{d-1}N^{d-r}$. Let $\xi^{(M,N)}$ be as in the Subsection \ref{ximnnil} and $\varepsilon>\sqrt{\frac{72}{N^r}}$ with $N^r>12$. From \eqref{mainerror2}, with probability at least $1-\frac{72}{\varepsilon^2 N^r}$ we get:
    \begin{equation}\label{mainerror3}
        \max_{\lambda\in \mathbb T_{N,M}^d}\left|M^{d-1}N^{d-r}\left(\mathcal{F}_{M,N}(\delta_{H_{\xi^{(M,N)}}})(\lambda) - \mathbb E\left[e(\xi^{(M,N)},\lambda)\right]\ \mathcal{F}_{M,N}(\delta_{H_N})(\lambda)\right)\right|\leq\frac{1}{M}\sqrt{\varepsilon+\frac{4C^2}{N^r}},
    \end{equation}
    where we use the fact that $\max_{\lambda\in\mathbb{T}_{N,M}^d}\mathsf{Var}[e(\xi^{(M,N)},\lambda)]\leq 4$. 
    We now bound \eqref{mainerror3} in the regime where $\varepsilon\to 0$ and $M,N\to\infty$. We require $N^{-r/2}\ll\varepsilon$, so that the event \eqref{mainerror3} has probability tending to $1$. Hence, by letting $M\to\infty$ and $\varepsilon\to 0$ such that $\varepsilon N^r\gg 1$ we get that almost surely
    \begin{equation}\label{asbasic3}
        \left\|M^{d-1}N^{d-r}\left(\mathcal{F}_{M,N}(\delta_{H_{\xi^{(M,N)}}})-\mathbb E[e(\xi^{(M,N)},\cdot)]\mathcal{F}_{M,N}(\delta_{H_N})\right)\right\|_{\ell_\infty(\mathbb T_{N,M}^d)}\to 0.
    \end{equation}
    \Cref{thm:contsquare} follows from it and \Cref{DFT-FT} via triangle inequality and from \Cref{limitxi}.
\end{proof}
%
\subsection{The Heisenberg group case}\label{sec:heis}
In this section, for our purposes we consider the (polarized) Heisenberg group $\Gamma$ being the set $\mathbb{R}^3$ with the group law
\begin{equation}\label{opheis}
    (x,y,z)\cdot (u,v,w):=\left(x+u,y+v,z+w+xv\right).
\end{equation}
\medskip
Similarly, for a number field $F$, we define $\Gamma(F):=\{(x,y,z)\in\Gamma:\ x,y,z\in F\}$, with group operation as in \eqref{opheis}. We consider especially $H_N=\Gamma(\mathbb{Z}/N\mathbb{Z})$ as the finite Heisenberg group with coefficients in the field $\mathbb{Z}/N \mathbb{Z}$, with sum and product operations as usual; observe that $H_N\cong\Gamma(\mathbb{Z})/\Gamma(N\mathbb{Z})$. Note that the Lebesgue measure in $\mathbb{R}^3$ is the Haar measure in $\Gamma$. Throughout this section, for $R>0$ we denote 
\[
    B_R:=[-R/2,R/2)^3.
\]
Consider the usual Heisenberg group $\Gamma'=(\mathbb{R}^3,\odot )$ with the group operation given by
\begin{equation}
    (x,y,z)\odot (u,v,w):=\left(x+u,y+v,z+w+\frac{1}{2}(xv-yu)\right).
\end{equation}
The Fourier Analysis in $\Gamma'$ is well-understood. Indeed, denote by $\mathcal{U}(L^2(\mathbb{R}))$ be the group of unitary operators acting on $L^2(\mathbb{R})$. By the Stone-von Neumann Theorem (see, for instance \cite[Theorem 6.49]{folland}), all the irreducible unitary representations of $\Gamma'$ (under equivalence) are determined by one of the following:
\begin{enumerate}
    \item The representations $\pi'_{\lambda}:\Gamma'\to\mathcal{U}(L^2(\mathbb{R}))$, $\lambda\neq 0$, where for $(x,y,z)\in\Gamma'$, the operator $\pi'_{\lambda}(x,y,z)$ acts on a function $\varphi\in L^2(\mathbb{R})$ via norm-preserving operations, explicitly given by the formula
\[
    \pi'_{\lambda}(x,y,z)\varphi(u):=e^{2\pi i\lambda z+\pi i\lambda xy}e^{2\pi i\lambda yu}\varphi(u+x),\qquad u\in\mathbb{R}.
\]
\item The 1-dimensional irreducible unitary representations $\pi'_{\alpha,\beta}:\Gamma'\to\mathbb{C}$, $\alpha,\beta\in\mathbb{R}$, given by

\[
\pi'_{\alpha,\beta}(x,y,z)=e^{2\pi i(\alpha x+\beta y)}.
\]
\end{enumerate}
The connection between $\Gamma'$ and $\Gamma$ is given by the isomorphism $\phi:\Gamma'\to\Gamma$:
\[
    \phi(x,y,z):=\left(x,y,z+\frac{1}{2}xy\right).
\]
This isomorphism, together with the classification of elements in $\widehat{\Gamma'}$, induces a bijection at the level of irreducible unitary representations which maps $\pi'_{\lambda}, \pi'_{\alpha,\beta}\in\widehat{\Gamma'}$ to the representations $\pi_{\lambda},\pi_{\alpha,\beta}\in\widehat{\Gamma}$ given by
\[
    \pi_{\lambda}(x,y,z):=\pi'_{\lambda}\left(x,y,z-\frac{1}{2}xy\right)\qquad\text{and}\qquad\pi_{\alpha,\beta}(x,y,z):=\pi'_{\alpha,\beta}\left(x,y,z-\frac{1}{2}xy\right),
\]
respectively. More precisely, the irreducible representations $\pi_{\lambda}(x,y,z)$ and $\pi_{\alpha,\beta}(x,y,z)$ act over a function $\varphi\in L^2(\mathbb{R})$ by:
\begin{equation}
    \pi_{\lambda}(x,y,z)\varphi(u)=e^{2\pi i\lambda z}e^{2\pi i\lambda yu}\varphi(u+x)\qquad\text{and}\qquad \pi_{\alpha,\beta}(x,y,z)=e^{2\pi i(\alpha x+\beta y)}.
\end{equation}
Following \eqref{FTnil}, for the lattice $H=(\Z^3,\cdot)\subset\Gamma$ we define its (averaged) Fourier Transform $\mathcal{F}(\delta_H)$ by the following limit in the weak-sense:
\begin{equation}\label{wFTH}
    \mathcal{F}(\delta_H)(\lambda):=\lim_{R\to\infty}\frac{\mathcal{F}(\delta_{H\cap B_R})(\pi_{\lambda})}{R^3}=\lim_{R\to\infty}\frac{1}{R^3}\sum_{(x,y,z)\in H\cap B_R}\pi_{\lambda}(x,y,z)
\end{equation}
and
\begin{equation}\label{wFTH2}
    \mathcal{F}(\delta_H)(\alpha,\beta):=\lim_{R\to\infty}\frac{\mathcal{F}(\delta_{H\cap B_R})(\pi_{\alpha,\beta})}{R^3}=\lim_{R\to\infty}\frac{1}{R^3}\sum_{(x,y,z)\in H\cap B_R}\pi_{\alpha,\beta}(x,y,z).
\end{equation}
More precisely, the limit \eqref{wFTH} is defined in duality with functions with compact support, namely, for every $\varphi,\psi\in C_c(\mathbb{R})$ there holds
\[
    \langle\mathcal{F}(\delta_H)(\lambda)\varphi,\psi\rangle_{L^2(\mathbb{R})}=\lim_{R\to\infty}\frac{1}{R^3}\sum_{(x,y,z)\in H\cap B_R}\int_{\mathbb{R}}e^{2\pi i\lambda(yu+z)}\varphi(x+u)\overline{\psi(u)}du;
\]
whereas for $\alpha,\beta\in\mathbb{R}$ we have that
\[
    \mathcal{F}(\delta_H)(\alpha,\beta)=\lim_{R\to\infty}\frac{1}{R^3}\sum_{(x,y,z)\in H\cap B_R}e^{2\pi i(\alpha x+\beta y)}.
\]
For the lattice $H=(\mathbb{Z}^3, \cdot)$,
consider the i.i.d random vectors $(\xi_{p})_{p\in H}$ defined in a probability space $(\Omega,\mathcal{T},\mathbb{P})$ with common distribution $\xi$, and the randomly perturbed set
\[
    H_{\xi}=\{p\cdot\xi_p:\ p\in H\}\subset\Gamma. 
\]
With these notations, our main recovery result in the Heisenberg group reads as follows.
\begin{thm}\label{Heisrecov}
Under the previous notations, assume that there is a positive number $\varepsilon$ such that $\mathbb{E}(|\xi|^{3+\varepsilon})<\infty$. Then almost surely, for every non-zero $\lambda\in\mathbb{R}$ and $\alpha,\beta\in\mathbb{R}$, there holds:
\begin{equation*}\label{recoveqheis}
   \mathcal{F}(\delta_{H_{\xi}})(\lambda)=\mathcal{F}(\delta_H)(\lambda)\mathbb{E}[\pi_{\lambda}(\xi)]\qquad\text{and}\qquad \mathcal{F}(\delta_{H_{\xi}})(\alpha,\beta)=\mathcal{F}(\delta_H)(\alpha,\beta)\mathbb{E}[\pi_{\alpha,\beta}(\xi)].
\end{equation*}
\end{thm}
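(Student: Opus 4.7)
The strategy is to derive \Cref{Heisrecov} as an explicit specialization of \Cref{nilrecov}, using the fact that $\Gamma$ is a 3-dimensional lcsc nilpotent Lie group, $H=(\Z^3,\cdot)$ is a 3-dimensional lattice in $\Gamma$, and the Stone–von Neumann representations $\pi_\lambda, \pi_{\alpha,\beta}$ exhaust $\widehat{\Gamma}$ (up to equivalence, and up to the null set of infinite-dimensional representations outside the $\pi_\lambda$ family). The hypothesis on $\xi$ matches the one of \Cref{nilrecov} directly, since here $d=r=3$ and the projection $\mathsf{proj}_r$ is the identity on $\R^3$.

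The first step is to match the Stone–von Neumann classification with Kirillov's orbit picture from \Cref{ssec: Kirillov}. Writing $\mathfrak{g}=\langle X,Y,Z\rangle$ with $[X,Y]=Z$ and $\{X^*,Y^*,Z^*\}$ the dual basis, for $l=\lambda Z^*$ with $\lambda\neq 0$ the bilinear form $B_l(\cdot,\cdot)=l([\cdot,\cdot])$ has radical $r_l=\R Z$, so the coadjoint orbit has dimension $2$ and $s=\dim(\Gamma/K_l)=1$. The subspace $\mathfrak{m}=\langle Y,Z\rangle$ is a polarizing subalgebra, $K_l=\mathsf{exp}(\mathfrak{m})=\{0\}\times\R\times\R$, and the induced representation $\pi_{l,\mathfrak{m}}$ of \eqref{nilrep} is unitarily equivalent to $\pi_\lambda$. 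For $l=\alpha X^*+\beta Y^*$ the form $B_l$ vanishes identically, so $\mathfrak{m}=\mathfrak{g}$, $s=0$, and the induced character coincides with $\pi_{\alpha,\beta}$. The coadjoint orbits not covered by these choices form a set of Plancherel measure zero, and so do not affect the almost-sure statement.

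Next I would verify that the averaged Fourier transforms defined in \eqref{wFTH} and \eqref{wFTH2} are exactly the specializations of \eqref{FTnil} to $\pi_\lambda$ and $\pi_{\alpha,\beta}$, using the cross-section $\alpha:\Gamma/K_l\to\Gamma$ that identifies $\mathcal H_l\cong L^2(\R)$ via $(x,y,z)\mapsto x$ in the $\pi_\lambda$ case and via the trivial cross-section in the $\pi_{\alpha,\beta}$ case; the factorization $(x,y,z)=k(x,y,z)\cdot t(x,y,z)$ yields precisely the explicit action written after \eqref{opheis}. Since $r=d=3$, the ball growth normalization $R^r$ matches $R^3$ in \eqref{wFTH}-\eqref{wFTH2}. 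The moment condition $\mathbb E[|\xi|^{3+\varepsilon}]<\infty$ is exactly the hypothesis $\mathbb E[|\mathsf{proj}_r(\xi)|^{r+\varepsilon}]<\infty$ of \Cref{nilrecov}.

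Finally, applying \Cref{nilrecov} to every $l\in\mathfrak g^*$ of the two types above gives, almost surely,
\[
\mathcal F(\delta_{H_\xi})(\pi_l)=\mathcal F(\delta_H)(\pi_l)\,\mathbb E[\pi_l(\xi)],
\]
which translates into the two claimed identities. The main obstacle is the bookkeeping needed to confirm the identification between the abstract Kirillov induced representation $\pi_{l,\mathfrak m}$ and the explicit Stone–von Neumann formulas, particularly ensuring the normalization of the cross-section $\alpha$ and of the Haar measure on $\Gamma/K_l$ agree with the limit prescription in \eqref{wFTH}; once this identification is in place, the discretization machinery of \Cref{nilDFT-FT}, \Cref{lemmalimxiDFT-nil} and \Cref{corobasic-nil} developed for general nilpotent Lie groups applies verbatim, with the finite Heisenberg groups $\Gamma(\Z/N\Z)$ playing the role of $H_N\subset \Gamma^d_{M,N}$.
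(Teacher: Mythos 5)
Your approach is correct and logically sound, but it takes a cleaner, more top-down route than the paper. The paper proves \Cref{Heisrecov} by invoking the Heisenberg-specific discretization lemmas \Cref{DFT-FT-Heis}, \Cref{lemmalimxiDFT-Heis}, and \Cref{corobasic-heis2} (together with \Cref{corobasic-heis}), which are instantiations of the general nilpotent-group lemmas \Cref{nilDFT-FT}, \Cref{lemmalimxiDFT-nil}, \Cref{corobasic-nil} worked out explicitly for the finite Heisenberg groups $\Gamma_{km,N}$ and the explicit representations $\pi_{a,b,c,m}$ of \eqref{heisunitrep}. You instead cite \Cref{nilrecov} directly and reduce the proof to the identification of the Stone--von Neumann representations $\pi_\lambda,\pi_{\alpha,\beta}$ with Kirillov induced representations $\pi_l$. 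Both routes are legitimate; the paper's choice reflects the fact that \Cref{sec:heis} lives in the ``Examples'' section, whose point is precisely to exhibit the discretization machinery and error bounds in concrete form (explicit $c_m$, explicit $\pi_{a,b,c,m}$, explicit rate \eqref{asbasic-heis}), whereas your reduction to \Cref{nilrecov} short-circuits this illustrative content. Your verification of the orbit-method bookkeeping (radical $r_l=\R Z$ for $l=\lambda Z^*$, polarization $\mathfrak m=\langle Y,Z\rangle$, $s=1<d/2$ so \Cref{lemma:dimirrednil} is satisfied; $\mathfrak m=\mathfrak g$ and $s=0$ in the abelianized case) is accurate, and the hypothesis match ($d=r=3$, $\mathsf{proj}_r=\mathrm{id}$) is correct. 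One small caveat worth flagging, which you inherit from the paper: the lattice hypothesis of \Cref{nilrecov} is stated via the condition that $\mathsf{exp}^{-1}(H)$ be an additive subgroup of $\mathfrak g$, and for the polarized Heisenberg group $\mathsf{exp}^{-1}(\Gamma(\Z))$ is not additive in exponential coordinates (the polynomial $P_3$ in \eqref{polcoord} carries a factor $\tfrac12$); what the paper actually uses is that $H$ becomes $\Z^3$ in Mal'cev coordinates of the second kind, which does hold. So your reduction is consistent with the paper's intended reading of the lattice definition, but if you cite \Cref{nilrecov} as a black box you should note the coordinates in which the additivity is taken. Finally, your remark about coadjoint orbits ``not covered'' having Plancherel measure zero is vacuous (all orbits are covered by the two families), and the Plancherel-null set issue is not actually relevant here since \Cref{Heisrecov}, like \Cref{nilrecov}, asserts the factorization for every $l$ and not just $\mu$-a.e.\ $l$.
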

Like in the Euclidean case, \Cref{Heisrecov} will be a consequence of the approximate recovery given by \Cref{basicthmH} for the finite Heisenberg groups
\[
    \Gamma_{M,N}:=\Gamma\left(\left(\frac{1}{M}\mathbb{Z}\right)/N\mathbb{Z}\right)\quad\text{and}\quad H_N:=\Gamma(\mathbb{Z}/N\mathbb{Z}).
\]
Observe that $|\Gamma_{M,N}|=M^3N^3$ and $|H_N|=N^3$.
\medskip

\noindent{\bf Irreducible representations in $H_n$ and scaling reasoning: } In this case we take $m|n$ and consider
\[
    x,y,z\in\mathbb Z/n\mathbb Z,\quad a,b\in\mathbb Z/\frac{n}{m}\mathbb Z,\quad c\in\mathbb Z/m\mathbb Z\text{ with }\mathsf{gcd}(c,m)=1,\quad f:\mathbb Z/m\mathbb Z\to \mathbb C,
\]
and denoting $e(x):=e^{2\pi i x}$ we have representation formula
\[
    \pi_{a,b,c}(x,y,z)f(j)= e\left(\frac{ax+by}{n}\right)e\left(\frac{c(yj + z)}{m}\right) f(j+x).
\]
Observe that by the definition of $\mathsf{gcd}$, the only case when $c=0$ corresponds to $m=1$; in this case, $\pi_{a,b,0}$ yields the $1$-dimensional representation:
\[
    \pi_{a,b}(x,y,z):=\pi_{a,b,0}(x,y,z)=e\left(\frac{ax+by}{n}\right).
\]
As we pass to the continuum we rescale $H_n\mapsto \Gamma_{M,N}$ with $n=MN$, and we rescale the representations $\pi_{a,b,c}$ in order to recover continuum representations $\pi_\lambda, \pi_{\alpha,\beta}$ as in \eqref{wFTH}, \eqref{wFTH2}.

\medskip
\noindent{\bf Irreducible unitary representations in $\Gamma_{M,N}$: } To obtain the quantitative recovery for the finite Heisenberg group $\Gamma_{M,N}$, we need a description of its unitary irreducible representations. 
\medskip

Fix $m|M$, $c\in\mathbb{Z}/mM\mathbb{Z}$ with $\mathsf{gcd}(c,Mm)=1$, and $a,b\in\frac{1}{N} \mathbb{Z}/\frac{M}{m}\mathbb{Z}$. Define the unitary irreducible representation $\pi_{a,b,c,m}:\Gamma_{M,N}\to\mathcal{U}\left(\ell^2\left(\frac{1}{M}\mathbb{Z}/m\mathbb{Z}\right)\right)$ by
\begin{equation}\label{heisunitrep}
    \pi_{a,b,c,m}(x,y,z)f(j):=e\left(ax+by\right)e\left(\frac{c(yj+z)}{m}\right)f(j+x),\qquad\text{where }f\in \ell^2\left(\frac{1}{M}\mathbb{Z}/m\mathbb{Z}\right).
\end{equation}

Following \Cref{ssection: nonabelFT}, the (normalized) Discrete Fourier Transform of a function $f\in \ell^2\left(\Gamma_{M,N}\right)$ associates to every $a,b\in\frac{1}{N}\mathbb{Z}/\frac{M}{m}\mathbb{Z}$ and $c\in\mathbb{Z}/mM\mathbb{Z}$ the operator over $\ell^2\left(\frac{1}{M}\mathbb{Z}/m\mathbb{Z}\right)$  given by:
\begin{equation}\label{DFTHeis}
\mathcal{F}_{M,N}(f)(a,b,c):=\mathcal{F}_{M,N}(f)(\pi_{a,b,c,m})=\frac{1}{M^3N^3}\sum_{(x,y,z)\in\Gamma_{M,N}}f(x,y,z)\pi_{a,b,c,m}(x,y,z);
\end{equation}

We point out that formula \eqref{DFTHeis} only works in duality with continuous functions supported on $\ell^2\left(\frac{1}{M}\mathbb{Z}/m\mathbb{Z}\right)$. Since the Fourier Transform $\mathcal{F}(\delta_H)$ defined as in \eqref{wFTH} and \eqref{wFTH2} acts in duality with compactly supported functions, to establish a limit from $\mathcal{F}_{M,N}(\delta_{H_N})$ to $\mathcal{F}(\delta_H)$ we require the discrete-to-continuum weak-limit in \Cref{weaknilconv}.

\begin{lemma}\label{DFT-FT-Heis}
    Let $\lambda$ be a non-zero real number and $k\in\mathbb{N}$ be fixed. For every positive integer $m$, let $\Gamma_{km,N}=\Gamma\left(\frac{1}{km}\mathbb{Z}/N\mathbb{Z}\right)$, and let $H=(\Z^3,\cdot)$ and $H_N:= \Gamma(\Z/N\Z)\leq \Gamma_{km,N}$. Let $a,b\in\frac{1}{N}\mathbb{Z}/k\mathbb{Z}$ and $c_m\in\mathbb{Z}/km^2\mathbb{Z}$ be a sequence such that $\mathsf{gcd}(c_m,km^2)=1$ and $c_m/m\to\lambda$ when $m\to\infty$, then the following limit holds
    \begin{equation}\label{lemmalimDFT-Heis}
    \begin{split}
        \lim_{m\to\infty}(km)^2\mathcal{F}_{km,N}(\delta_{H_N})(\pi_{a,b,c_m,m})=\frac{1}{N^3}\sum_{(x,y,z)\in H\cap B_N}\pi_{a,b}(x,y,z)\pi_{\lambda}(x,y,z).
    \end{split}
    \end{equation}
    In particular, fix $k\in\mathbb{N}$, $\alpha,\beta\in[-k/2,k/2]$, and let $a_N,b_N\in\frac{1}{N}\mathbb{Z}/k\mathbb{Z}$ such that $a_N\to\alpha$ and $b_N\to\beta$ when $N\to\infty$. Then the following limits hold 
    \begin{equation}\label{lemmalimDFT-Heis2}
    \begin{split}
        \lim_{N\to\infty}\lim_{m\to\infty}(km)^2\mathcal{F}_{km,N}(\delta_{H_N})(\pi_{0,0,c_m,m})&=\mathcal{F}(\delta_H)\left(\lambda\right)\\
        \lim_{N\to\infty}\lim_{m\to\infty}(km)^2\mathcal{F}_{km,N}(\delta_{H_N})(\pi_{a_N,b_N,0,m})&=\mathcal{F}(\delta_H)\left(\alpha,\beta\right),
    \end{split}
    \end{equation}
    where all the limits for $m\to\infty$ are in the sense of \Cref{weaknilconv} and the limit with respect to $N$ is weak in the classical sense.
\end{lemma}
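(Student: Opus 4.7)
The plan is to prove the first limit \eqref{lemmalimDFT-Heis} by a direct Riemann-sum calculation against test functions in $C_c(\mathbb{R})$, and then to derive the two limits of \eqref{lemmalimDFT-Heis2} by specializing the parameters and letting $N\to\infty$. The overall strategy mirrors the Euclidean argument of \Cref{DFT-FT} (and its nilpotent generalization \Cref{nilDFT-FT}), with the added feature that the representation $\pi_{a,b,c_m,m}$ acts non-trivially on the $\ell^2$-space by shifts and modulations rather than simply by scalars.

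I would fix $\varphi,\psi\in C_c(\mathbb{R})$ and, using the representation formula \eqref{heisunitrep} and the DFT definition \eqref{DFTHeis}, expand
\begin{equation*}
(km)^2 \langle \mathcal{F}_{km,N}(\delta_{H_N})(\pi_{a,b,c_m,m})\varphi,\psi\rangle_{\ell^2(\frac{1}{km}\mathbb{Z}/m\mathbb{Z})}
= \frac{1}{km\, N^3}\sum_{(x,y,z)\in H_N}\sum_{j} e(ax+by)\,e\!\left(\tfrac{c_m(yj+z)}{m}\right)\varphi(j+x)\overline{\psi(j)},
\end{equation*}
where the inner $j$-sum ranges over $\frac{1}{km}\mathbb{Z}\cap[-m/2,m/2)$ and $\varphi,\psi$ are $m\mathbb{Z}$-periodized per \Cref{rmk:periodization}. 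Once $m$ is large enough that the supports of $\varphi,\psi$ lie inside $(-m/2,m/2)$, the inner sum is a Riemann sum of mesh $1/(km)$. Continuity and compact support of $\varphi,\psi$, combined with $c_m/m\to\lambda$, give
\[
\sum_{j} e\!\left(\tfrac{c_m(yj+z)}{m}\right)\varphi(j+x)\overline{\psi(j)}
= km\int_{\mathbb{R}} e(\lambda(yu+z))\,\varphi(u+x)\overline{\psi(u)}\,du + o(km)
\]
uniformly in the finite set $(x,y,z)\in H_N$. Recognizing the integral as $\langle \pi_\lambda(x,y,z)\varphi,\psi\rangle_{L^2(\mathbb{R})}$ and identifying $H_N\leftrightarrow H\cap B_N$ via the canonical quotient yields the weak convergence
\[
\lim_{m\to\infty}(km)^2\,\mathcal{F}_{km,N}(\delta_{H_N})(\pi_{a,b,c_m,m}) = \frac{1}{N^3}\sum_{(x,y,z)\in H\cap B_N}\pi_{a,b}(x,y,z)\pi_\lambda(x,y,z),
\]
in the sense of \Cref{weaknilconv}, which is \eqref{lemmalimDFT-Heis}.

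For the two limits of \eqref{lemmalimDFT-Heis2}, I would then take $N\to\infty$. Setting $a=b=0$ makes $\pi_{a,b}\equiv 1$, and the weak limit $\frac{1}{N^3}\sum_{H\cap B_N}\pi_\lambda\to\mathcal{F}(\delta_H)(\lambda)$ is exactly the definition \eqref{wFTH}. Setting $c=0$ degenerates the representation to the one-dimensional character $\pi_{a_N,b_N}(x,y,z)=e(a_Nx+b_Ny)$; the $m\to\infty$ passage is trivial for this character, and then $\frac{1}{N^3}\sum_{H\cap B_N}e(a_Nx+b_Ny)\to\mathcal{F}(\delta_H)(\alpha,\beta)$ by \eqref{wFTH2}, using that $a_N\to\alpha$, $b_N\to\beta$ together with the boundedness of the partial sums.

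The main obstacle is the careful bookkeeping of scaling factors: the normalization $((km)^3N^3)^{-1}$ from the DFT, the factor $km$ from the Riemann-sum-to-integral conversion at mesh $1/(km)$, and the explicit $(km)^2$ rescaling prefactor must combine to leave the clean $1/N^3$ appearing in \eqref{lemmalimDFT-Heis}. A secondary subtlety is that the $m\mathbb{Z}$-periodization of $\varphi,\psi$ and the shift $j\mapsto j+x$ for $x\in H_N$ do not interfere with the integral limit, since $\mathrm{supp}(\varphi)\cup\mathrm{supp}(\psi)+[-N,N]$ fits inside a fundamental domain once $m>2(N+\mathrm{diam}\,\mathrm{supp})$; this justifies the passage to the Riemann sum uniformly over the finite sum $(x,y,z)\in H_N$ for the order of limits $m\to\infty$ before $N\to\infty$.
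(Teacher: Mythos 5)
Your proposal is correct and takes essentially the same approach as the paper. The paper does not spell out a separate proof of \Cref{DFT-FT-Heis} but presents it as the Heisenberg specialization of the general nilpotent argument of \Cref{nilDFT-FT}, whose proof is precisely the Riemann-sum computation you carry out: expand $(km)^2\langle\mathcal{F}_{km,N}(\delta_{H_N})(\pi_{a,b,c_m,m})\varphi,\psi\rangle$ with the unnormalized $\ell^2$-pairing so that the prefactor $\tfrac{1}{kmN^3}$ appears, identify the $j$-sum of mesh $1/(km)$ as a Riemann sum converging (using $c_m/m\to\lambda$ and compact support of $\varphi,\psi$ periodized as in \Cref{rmk:periodization}) to $\langle\pi_\lambda(x,y,z)\varphi,\psi\rangle_{L^2(\R)}$, and then pass $N\to\infty$ via \eqref{wFTH}--\eqref{wFTH2}. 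Your bookkeeping of the scaling constants $(km)^2\cdot\tfrac{1}{(km)^3N^3}\cdot km=\tfrac{1}{N^3}$ and the remark that the $m\Z$-periodization and the translates by $x\in H_N$ fit in a fundamental domain for $m$ large match the paper's treatment.
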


\begin{lemma}\label{lemmalimxiDFT-Heis}
    Let $\lambda\in\mathbb{R}\setminus\{0\}$, $\alpha,\beta\in\mathbb{R}$ and $m_0\in\mathbb{N}$ large enough such that $\lambda\in[-m_0/2,m_0/2]$. 
    Consider $a_N,b_N\in\frac{1}{N}\mathbb{Z}/k\mathbb{Z}$ and $c_m$ be as in \Cref{DFT-FT-Heis}. Let $\xi^{(km,N)}$ be as in \Cref{ximnnil} and suppose there exists a positive number $\varepsilon$ such that $\mathbb{E}[|\xi|^{3+\varepsilon}]<\infty$.  Then the following weak limits hold almost surely :
    \begin{equation}
    \begin{split}
        \lim_{N\to\infty}\lim_{m\to\infty}(km)^2\mathcal F_{km,N}\left(\delta_{H_{\xi^{(km,N)}}}\right)(\pi_{0,0,c_m,m})&=\mathcal{F}(\delta_{H_{\xi}})(\lambda)\\
        \lim_{N\to\infty}\lim_{m\to\infty}(km)^2\mathcal F_{km,N}\left(\delta_{H_{\xi^{(km,N)}}}\right)(\pi_{a_N,b_N,0,m})&=\mathcal{F}(\delta_{H_{\xi}})(\alpha,\beta).
    \end{split}
    \end{equation}
\end{lemma}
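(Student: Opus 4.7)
The plan is to run the same three-step argument as in the proof of \Cref{lemmalimxiDFT-nil}, specialized to the Heisenberg group, which is a $3$-dimensional step-$2$ lcsc nilpotent Lie group. Its two families of irreducible representations fit Kirillov's orbit machinery: the $\infty$-dimensional $\pi_\lambda$ corresponds to a generic $2$-dimensional coadjoint orbit and has $s=\dim(\Gamma/K_\lambda)=1$, while the $1$-dimensional $\pi_{\alpha,\beta}$ corresponds to a fixed point of the coadjoint action and has $s=0$. Since the ambient lattice is the full $H=\Z^3$ so that $r=d=3$, only bookkeeping needs to change from the general argument.

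First, to control the "escape of mass" near the boundary of $B_N$, I will introduce
\[
    A_N := \{p \in H \cap B_N : p \cdot \xi_p \notin B_N\}, \qquad A_N' := \{p \in H \setminus B_N : p \cdot \xi_p \in B_N\}.
\]
The moment hypothesis $\mathbb E[|\xi|^{3+\varepsilon}]<\infty$ together with the reasoning in \cite[Lemma 3.1]{Yakir} will yield almost surely $|A_N|/N^3, |A_N'|/N^3 \to 0$, which absorbs the error in replacing the sum over $p \in H_\xi \cap B_N$ by the sum over $\{p\cdot\xi_p : p \in H\cap B_N\}$ after dividing by $N^3$.

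Second, I will test both sides against arbitrary $\varphi,\psi \in C_c(\R)$ for $\pi_\lambda$ (and test the scalar directly for $\pi_{\alpha,\beta}$), and reduce by the triangle inequality to three independent error terms: (i) the boundary error from $A_N\cup A_N'$, which vanishes by the previous step; (ii) for each fixed $p\in H \cap B_N$, the convergence of the Riemann sum
\[
    \frac{1}{km}\sum_{q\in\frac{1}{km}\Z/m\Z} e\!\left(\tfrac{c_m(yq+z)}{m}\right)\varphi(q+x)\overline{\psi(q)} \;\longrightarrow\; \int_{-m/2}^{m/2} e(\lambda(yu+z))\,\varphi(u+x)\overline{\psi(u)}\,du,
\]
guaranteed by $c_m/m\to\lambda$, uniform continuity of the bounded integrand, and compact support of $\varphi,\psi$; (iii) the discretization $\xi_p^{(km,N)}\to\xi_p$ from \Cref{limitxi}, handled by dominated convergence since the characters are uniformly bounded. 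Taking first $m\to\infty$ and then $N\to\infty$ recovers the definition \eqref{FTxi} of $\mathcal F(\delta_{H_\xi})(\pi_\lambda)$; the $\pi_{\alpha,\beta}$ case is formally simpler, using only the $1$-dimensional character $e(a_N x + b_N y)\to e(\alpha x+\beta y)$.

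The main obstacle is interchanging the three limits (over $p\in H\cap B_N$, over the discretization parameter $m$, and over $N$) while preserving almost-sure convergence, since the $\xi_p^{(km,N)}$ depend jointly on $m,N$ and $p$. This is exactly what the i.i.d.\ structure of $(\xi_p)_{p\in H}$ together with the quantitative recovery bound \eqref{asest2} provide, so no genuinely new technical input is needed beyond specializing $d=r=3$ and $s\in\{0,1\}$ in the explicit formulas \eqref{heisunitrep}.
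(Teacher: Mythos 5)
Your proposal follows the same three-step argument the paper uses to prove the general nilpotent analogue, \Cref{lemmalimxiDFT-nil}, which is exactly what the paper intends here (the Heisenberg lemma is the specialization $d=r=3$, $s\in\{0,1\}$, and no separate proof is given). One small inaccuracy in your last paragraph: the limit interchange does not rely on the quantitative bound \eqref{asest2} or on i.i.d.\ structure beyond the escape-of-mass control --- for fixed $N$ the sum over $p\in H\cap B_N$ is finite, so the $m\to\infty$ limit passes through termwise, and almost-sure convergence is secured solely by the Borel--Cantelli-type argument giving $|A_N|/N^3\to 0$; the bound \eqref{asest2} is used later, in the proof of \Cref{nilrecov}/\Cref{Heisrecov}, not in this lemma.
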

The last ingredient to prove the main recovery result on the Heisenberg group is that the error given by \Cref{basicthmH} for finite Heisenberg groups converges to zero when considering a sequence of discrete Heisenberg-torii $\Gamma_{M,N}$, for $M,N\to\infty$.
\begin{prop}\label{corobasic-heis}
    Let $N,m,k\in\mathbb{N}$ and consider finite Heisenberg groups $\Gamma_{km,N}$ and $H_{N}$ as before. Let $\xi_p^{(N)}=\xi_p,p\in H_N$ and $\varepsilon>0$ satisfying the hypotheses of \Cref{basicthmH}. Then with probability at least $1-\frac{C}{\varepsilon^2|H|}$ for some suitable positive constant $C$ there holds:
    \begin{equation}\label{asbasic-heis}
        \left\|\mathcal{F}_{km,N}(\delta_{H_{\xi}})-\mathcal{F}_{km,N}(\delta_{H})\mathbb{E}(\xi(\cdot))\right\|^2_{L^2(\widehat{\Gamma}_{km,N})}\leq\frac{(\varepsilon+4C^2)N^{7}}{(km)^{5}}.  
    \end{equation}
\end{prop}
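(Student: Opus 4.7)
The plan is to specialize \Cref{basicthmH} to the Heisenberg case $\Gamma=\Gamma_{km,N}$ with normal subgroup $H_N$, and then substitute the algebraic data specific to this setting into the general bound \eqref{mainerrorH}.

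First, I would verify that $H_N=\Gamma(\mathbb{Z}/N\mathbb{Z})$ is a normal subgroup of $\Gamma_{km,N}=\Gamma(\frac{1}{km}\mathbb{Z}/N\mathbb{Z})$. Using the explicit group law \eqref{opheis}, conjugation of an element $(u,v,w)\in H_N$ by a general $(x,y,z)\in\Gamma_{km,N}$ produces an element whose first two coordinates remain $u,v\in\mathbb{Z}/N\mathbb{Z}$, while the third coordinate is shifted by a commutator term lying in the center $\mathbb{Z}/N\mathbb{Z}$; hence the conjugate lies in $H_N$.

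Second, I would identify the cardinalities $|\Gamma_{km,N}|=(km)^3N^3$ and $|H_N|=N^3$, and bound $\max_{\pi\in\widehat{\Gamma_{km,N}}}d_\pi\leq kmN$. For the dimension bound, I use the classification \eqref{heisunitrep}: each irreducible $\pi_{a,b,c,m'}$ with $m'\mid km$ acts on $\ell^2(\frac{1}{km}\mathbb{Z}/m'\mathbb{Z})$, so has dimension $km\cdot m'$. For such a representation to descend from $\Gamma(\frac{1}{km}\mathbb{Z})$ to $\Gamma_{km,N}$, the relation $(0,0,z)\equiv(0,0,z+N)$ must be respected, which forces $e(cN/m')=1$; since $\gcd(c,m')=1$, this implies $m'\mid N$. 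Combined with $m'\mid km$, we get $m'\leq\min(km,N)$, and therefore $d_\pi\leq km\min(km,N)\leq kmN$. A dimension count via $\sum_\pi d_\pi^2=|\Gamma_{km,N}|$ confirms that \eqref{heisunitrep} enumerates all equivalence classes of irreducibles.

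Third, substituting into \eqref{mainerrorH} of \Cref{basicthmH}:
\begin{equation*}
\frac{|H_N|^4}{|\Gamma_{km,N}|^2}\left(\varepsilon+\frac{4C^2}{|H_N|}\right)\max_{\pi}d_\pi\leq\frac{N^{12}}{(km)^6 N^6}\left(\varepsilon+\frac{4C^2}{N^3}\right)\cdot kmN=\frac{\varepsilon N^7+4C^2 N^4}{(km)^5}\leq\frac{(\varepsilon+4C^2)N^7}{(km)^5},
\end{equation*}
which yields the claimed bound \eqref{asbasic-heis}. The probability estimate $1-C/(\varepsilon^2|H_N|)$ is inherited directly from the failure-probability statement of \Cref{basicthmH}, with the factor $12L$ absorbed into the constant $C$. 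The main potential obstacle is the completeness of the classification \eqref{heisunitrep}; this reduces to the Stone-von Neumann theorem for finite Heisenberg groups, or equivalently to Mackey's machine applied to the abelian normal subgroup $\{(0,y,z)\}\subset\Gamma_{km,N}$, a standard computation whose verification amounts to matching the dimension sum $\sum_\pi d_\pi^2$ with $|\Gamma_{km,N}|=(km)^3N^3$.
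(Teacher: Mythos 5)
Your proposal is correct and follows essentially the same route as the paper: specialize \Cref{basicthmH} to $\Gamma = \Gamma_{km,N}$, $H = H_N$, substitute $|\Gamma_{km,N}|=(km)^3N^3$, $|H_N|=N^3$, and $\max_\pi d_\pi \leq kmN = |\Gamma_{km,N}|^{1/3}$, and then bound $N^4\leq N^7$. The extra verifications you include (normality of $H_N$ and the dimension bound for irreducibles, which the paper states without proof) are appropriate background checks and do not change the structure of the argument.
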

\begin{proof}
    From \Cref{basicthmH} and since for the finite Heisenberg groups $\Gamma_{km,N}$ there holds that $\max\{d_{\pi}:\ \pi\in\widehat{\Gamma}_{km,N}\}=kmN=|\Gamma_{km,N}|^{1/3}$, then for $\frac{C'}{|H|}<\varepsilon^2$ we get:
    \begin{equation}\label{asest}
    \begin{split}
        \left\|\mathcal{F}_{km,N}(\delta_{H_{\xi}})-\mathcal{F}_{km,N}(\delta_H)\mathbb{E}(\xi(\cdot))\right\|^2_{L^2(\widehat{\Gamma}_{km,N})}&\leq\frac{|H|^4}{|\Gamma_{km,N}|^2}\left(\varepsilon+\frac{4C^2}{|H|}\right)\max_{\pi\in\widehat{\Gamma}_{km,N}}d_{\pi}\\
        &=\frac{\varepsilon N^{7}}{(km)^5}+\frac{4C^2N^{4}}{(km)^{5}}.
    \end{split}
    \end{equation}  
    To conclude, it suffices to use the fact that $\frac{N^{4}}{(km)^{5}}\leq \frac{N^{7}}{(km)^5}$.
\end{proof}
Hence, from \eqref{dualnorm-l2}, \eqref{asest2} and \Cref{corobasic-heis} we get the next result which in fact implies \eqref{FTasconv}.
\begin{corollary}\label{corobasic-heis2}
    Under the hypotheses and notations of \Cref{corobasic-heis}, let $\xi_p^{(km,N)}, p\in H_N$ as in \eqref{ximnnil}.
    Then, with probability at least $1-\frac{C}{\varepsilon^2|H|}$, for every $\lambda\in\mathbb{R}\setminus\{0\}$, $a,b\in \frac{1}{N}\mathbb{Z}/k\mathbb{Z}$ and every sequence $c_m\in\mathbb{Z}/km^2\mathbb{Z}$, $\mathsf{gcd}(c_m,km^2)=1$ for which $c_m/m\to\lambda$ when $m\to\infty$, there holds that:
    \begin{equation*}
     (km)^2\left\|\mathcal{F}_{km,N}(\delta_{H_{\xi^{(km,N)}}})(\pi)\varphi-\mathcal{F}_{km,N}(\delta_{H_N})\mathbb{E}(\pi(\xi^{(km,N)}))\varphi\right\|_{\ell^2\left(\frac{1}{km}\mathbb{Z}/m\mathbb{Z}\right)}\longrightarrow 0,
     \end{equation*}
     where $\pi$ is of the form $\pi_{0,0,c_m,m}$ or $\pi_{a,b,0,m}$ and $\phi\in\ell^2\left(\frac{1}{km}\Z/m\Z\right)$.
\end{corollary}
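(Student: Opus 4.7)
My plan is to derive \Cref{corobasic-heis2} as the Heisenberg specialization of the general scaling argument leading to \eqref{FTasconv}. The central input is the per-representation estimate \eqref{basicthmHpw} of \Cref{rmk: basicthmHsubset}, which is the pointwise-in-$\pi$ sharpening of \Cref{corobasic-heis}. I will apply it to $\Gamma=\Gamma_{km,N}$ and $H=H_N$, plug in the Heisenberg parameters $|H_N|=N^3$, $|\Gamma_{km,N}|=(km)^3N^3$, together with the dimension $d_\pi=km^2$ for the generic representation $\pi=\pi_{0,0,c_m,m}$ and $d_\pi=1$ for the one-dimensional representation $\pi=\pi_{a,b,0,m}$, and then rescale by the Corollary's factor $(km)^2$.

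Concretely, on the event of probability at least $1-C/(\varepsilon^2|H_N|)$ furnished by \Cref{basicthmH}, the bound \eqref{basicthmHpw} yields for every $\pi\in\widehat{\Gamma}_{km,N}$ and every basis vector $e_i^{(\pi)}$ of $V_\pi$
\[
\bigl\|\bigl(\mathcal F_{km,N}(\delta_{H_{\xi^{(km,N)}}})(\pi)-\mathcal F_{km,N}(\delta_{H_N})(\pi)\,\mathbb E[\pi(\xi^{(km,N)})]\bigr)e_i^{(\pi)}\bigr\|^2 \;\le\; \frac{|H_N|^4}{|\Gamma_{km,N}|^2}\Bigl(\varepsilon+\frac{4C^2}{|H_N|}\Bigr)d_\pi^2,
\]
which is precisely the Heisenberg instance of \eqref{dualnorm-l2}. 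To promote this to an estimate on $\|A\varphi\|$ for arbitrary $\varphi$, I will use $\|A\varphi\|^2\le\|A\|_{\mathrm{op}}^2\|\varphi\|^2\le\bigl(\sum_i\|Ae_i^{(\pi)}\|^2\bigr)\|\varphi\|^2$ and then multiply by $(km)^4$. The resulting exponent arithmetic is identical to the passage from \eqref{dualnorm-l2} to \eqref{asest2} in the general nilpotent case, specialized to $d=r=3$ with $s=1$ (generic case) or $s=0$ (abelian case). The strict inequality $3d/2-3s>0$ guaranteed by \Cref{lemma:dimirrednil} is exactly what produces a negative power of $m$ in the rescaled bound, so $(km)^2\|(\cdot)\varphi\|_{\ell^2(\frac{1}{km}\Z/m\Z)}\to 0$ as $m\to\infty$ on the good event. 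Combined with \Cref{disxi1} (independence of the $\xi_p^{(km,N)}$), this yields both convergence statements in \Cref{corobasic-heis2}.

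The main obstacle will be careful bookkeeping of normalizations: one must track how the $1/|\Gamma_{km,N}|$ prefactor in \eqref{DFTHeis}, the dimension $d_\pi=km^2$ of the induced representation acting on $\ell^2(\tfrac{1}{km}\Z/m\Z)$, and the $(km)^2$ scaling factor inherited from \Cref{DFT-FT-Heis} combine, while simultaneously verifying that a $C_c(\mathbb R)$-periodization $\varphi$ has $\ell^2(\tfrac{1}{km}\Z/m\Z)$-norm controlled uniformly in $m$ in the same normalization that is used in \eqref{basicthmHpw}. Once this bookkeeping is settled, the convergence follows and the argument reduces to the one already carried out in \Cref{ssec:recovnil}.
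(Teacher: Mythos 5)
Your approach is the paper's approach: both derive \Cref{corobasic-heis2} by applying the per-representation bound \eqref{basicthmHpw} of \Cref{rmk: basicthmHsubset} with $\Gamma=\Gamma_{km,N}$, $H=H_N$, $|H_N|=N^3$, $|\Gamma_{km,N}|=(km)^3N^3$, $d_\pi=km^2$, rescaling by $(km)^2$, and invoking the nilpotent-case exponent computation \eqref{dualnorm-l2}--\eqref{asest2} together with \Cref{lemma:dimirrednil} and the independence statement \Cref{disxi1}. The outline is the right one.

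However, the ``bookkeeping'' you flag as the main obstacle and then defer is not an afterthought but precisely where the argument lives, and in the form you state it the step does not close. Writing $A:=\mathcal F_{km,N}(\delta_{H_{\xi^{(km,N)}}})(\pi)-\mathcal F_{km,N}(\delta_{H_N})(\pi)\,\mathbb E[\pi(\xi^{(km,N)})]$ and applying \eqref{basicthmHpw} with the Heisenberg parameters gives
\[
\|A\|_{\mathrm{HS}}^2\le \frac{|H_N|^4}{|\Gamma_{km,N}|^2}\Bigl(\varepsilon+\frac{4C^2}{|H_N|}\Bigr)d_\pi^2=\frac{N^6}{k^4m^2}\Bigl(\varepsilon+\frac{4C^2}{N^3}\Bigr),
\]
so $(km)^2\|A\|_{\mathrm{HS}}\le mN^3(\varepsilon+4C^2/N^3)^{1/2}$, which \emph{diverges} as $m\to\infty$. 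Your bound $\|A\varphi\|\le\|A\|_{\mathrm{HS}}\|\varphi\|$ therefore cannot give $(km)^2\|A\varphi\|\to0$ unless $\|\varphi\|$ itself decays at least like $1/m$, and for a fixed $C_c(\R)$-function periodized to $\frac{1}{km}\Z/m\Z$ one in fact has $\|\varphi\|_{\ell^2}\sim\sqrt{km}$ in the unnormalized convention (growth, not decay) and $\|\varphi\|_{\ell^2}\sim1$ in the Riemann-sum convention used in \Cref{weaknilconv}. Neither choice rescues the estimate. So the step from ``HS bound times $\|\varphi\|$'' to ``$\to0$'' requires either a different norm convention in \eqref{basicthmHpw} than the one you are implicitly using (you must pin down exactly which $V_\pi$-norm makes $(e_i^{(\pi)})$ orthonormal and how it compares to the $\ell^2(\frac{1}{km}\Z/m\Z)$-norm appearing in the corollary), or a genuinely better bound on $\|A\varphi\|$ for the specific periodized $\varphi$ than the crude $\|A\|_{\mathrm{op}}\|\varphi\|$. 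You should also double-check \eqref{asest2} itself: plugging $d=r=3$, $s=1$, $d_\pi=km^2$ into \eqref{dualnorm-l2} and scaling by $(km)^{2(d-s)}N^{2(d-r)}$ does not reproduce the exponent $\frac{N^9}{k^{7/2}m^{3/2}}$ claimed there, so ``the exponent arithmetic is identical to \eqref{asest2}'' needs an independent verification rather than a citation. Until these two points are settled the proof is incomplete.
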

\begin{proof}[Proof of \Cref{Heisrecov}] 
It is a direct application of \Cref{DFT-FT-Heis}, \Cref{lemmalimxiDFT-Heis} and \Cref{corobasic-heis2}.
    
\end{proof}

{\bf Acknowledgements: }\emph{ M. P. was sponsored by the Chilean Fondecyt Regular grant number 1210462 entitled ``Rigidity, stability and uniformity for large point configurations'', and R. V. was sponsored by the Chilean Fondecyt Postdoctoral grant number 3210109 entitled ``Geometric and Analytical aspects of Discrete Structures''.}




\begin{thebibliography}{Dillo 83}

\bibitem{nistreport} {\sc G. Alagic, D. Apon, D. Cooper, Q. Dang, T. Dang, J. Kelsey, J. Lichtinger, C. Miller, D. Moody, R. Peralta,} Status report on the third round of the NIST post-quantum cryptography standardization process. {\em US Department of Commerce, NIST} 2022.

\bibitem{BG1} {\sc M. Baake and U. Grimm,} {\em Aperiodic Order, vol. 1: A mathematical invitation,} first ed., Encyclopedia of Math. and its Appl., vol. {\bf 149}, Cambridge Univ. Press, 2013.

\bibitem{engel}{\sc H. Bahouri, D. Barilari, I. Gallagher, M. L\'eautaud,} Spectral summability for the quartic oscillator with applications to the Engel group. {\em arXiv preprint arXiv:2206.10396.}

\bibitem{scallim}{\sc I. Benjamini, H. Finucane, R. Tessera,} On the scaling limit of finite vertex graphs with large diameter. {\em Combinatorica}, 37(3), 333-374, 2017.

\bibitem{step3}{\sc U. Boscain, J-P. Gauthier, F. Rossi,} Hypoelliptic heat kernel over 3-step nilpotent Lie groups. {\em J. Math. Sci.}, 199, 614-628, 2014.

\bibitem{clifford}{\sc A.H. Clifford,} Representations Induced in an Invariant Subgroup. {\em Annals of Mathematics}, 38, no. 3 (1937): 533–550.

\bibitem{corgreen} {\sc L. Corwin, F. Greenleaf, }{\em Representations of nilpotent Lie groups and their applications: Volume 1, Part 1}, Basic theory and examples. Cambridge university press, 1990.

\bibitem{algebras} {\sc J. Dixmier, }{\em $C^*$ algebras}. North-Holland Publishing Co. Translated from the French by Francis Jellett, North-Holland Mathematical Library, Vol. 15., 1977.

\bibitem{quannil} {\sc V. Fischer, M. Ruzhansky, }{\em Quantization on nilpotent Lie groups}. Springer Nature, 2016.

\bibitem{folland} {\sc G. Folland,}{\em A course in Abstract Harmonic Analysis}. Studies in Advanced Mathematics, 1995.

\bibitem{Hof} {\sc A. Hof, }Diffraction by aperiodic structures at high temperatures, {\em J. Phys. A}, vol. {\bf28}, 1995.

\bibitem{kir}{\sc A.A. Kirillov,} Unitary representations of nilpotent Lie groups. {\em Russ. Math. Surv.}, {\bf 17} 53, no. 4 (1962).

\bibitem{kkt} {\sc M. A. Klatt, J. Kim, S. Torquato,} Cloaking the underlying long-range order of randomly perturbed lattices, {\em Phys. Rev. E}, vol. {\bf101}, 032118, 2020.

\bibitem{signoise} {\sc N. Lev, R. Peled and Y. Peres, }Separating signal from noise, {\em Proc. London Math. Soc}, {\bf 110}(4), 883-931, 2015.

\bibitem{pansu} {\sc P. Pansu, }Croissance des boules et des g\'eod\'esiques ferm\'ees dans les nilvari\'et\'es., {\em Erg. Theory and Dyn. Syst.}, {\bf 3}(3),415-445, 1983.

\bibitem{probgroup} {\sc G. Pete, }Probability and geometry on groups. Lecture notes for a graduate course. {\em Available on \verb!https://math.bme.hu/~gabor/PGG.pdf! (Last revision, June 2023)} (2017).

\bibitem{pv} {\sc M. Petrache, R. Viera, }Almost sure recovery in quasi-periodic structures. {\em J. Stat. Phys.,} {\bf 190}(2), 2023.

\bibitem{Tao} {\sc T. Tao, }Lecture notes of Representation Theory on finite non-abelian groups. {\em Available on https://www.math.ucla.edu/~tao/247b.1.07w/}.

\bibitem{terras} {\sc A. Terras, }Fourier analysis on finite groups and applications. No. 43. Cambridge University Press, 1999.

\bibitem{quan} {\sc P. Woit, }{\em Quantum theory, groups and representations}. Springer International Publishing, 2017.

\bibitem{Yakir} {\sc O. Yakir, }Recovering the Lattice From Its Random Perturbations, {\em Int. Math. Res. Notices},{\bf 2022}(8) 6243-6261, 2022.

\end{thebibliography}
\end{document}